\documentclass[11pt]{article}

\usepackage[margin=1in]{geometry}
\usepackage{amsmath,amssymb,amsthm,mathtools,bm}
\usepackage{bm}
\usepackage{enumitem}
\usepackage{lmodern}
\usepackage{microtype}
\usepackage[colorlinks=true,linkcolor=blue,citecolor=blue,urlcolor=blue]{hyperref}

\newtheorem{theorem}{Theorem}
\newtheorem{proposition}{Proposition}
\newtheorem{remark}{Remark}
\newtheorem{definition}{Definition}
\newtheorem{lemma}{Lemma}
\newtheorem{lemma*}{Lemma}

\usepackage{makecell}
 \usepackage{float}
\newtheorem{assumption}{Assumption}

\newcommand{\R}{\mathbb{R}}
\newcommand{\Om}{\Omega}
\newcommand{\T}{\mathcal{T}_h}
\newcommand{\dt}{\Delta t}
\newcommand{\relu}{\operatorname{ReLU}}
\newcommand{\diag}{\operatorname{diag}}
\newcommand{\Span}{\operatorname{span}}
\newcommand{\clip}{\operatorname{clip}}
\newcommand{\Tr}{\operatorname{tr}}
\newcommand{\NN}{\mathcal{N}}
\newcommand{\Enc}{\mathcal{E}_h}
\newcommand{\calA}{\mathcal{A}}
\newcommand{\calG}{\mathcal{G}}
\newcommand{\calR}{\mathcal{R}}
\newcommand{\norm}[1]{\left\lVert #1 \right\rVert}
\newcommand{\abs}[1]{\left\lvert #1 \right\rvert}

\title{Numerical Neural Operator: Connection Between Numerical Analysis and the Operator Learning and Building Discretized Neural Operators from Finite Element Methods}
\date{}
\author{
Zecheng Zhang \thanks{Department of Applied and Computational Mathematics and Statistics, University of Notre Dame, Notre Dame IN 46556.  \texttt{Email: zecheng.zhang.math@gmail.com}. Acknowledgment: the research is supported by DOE DE-SC0025440 and NSF NO. 2615564.}
}

\begin{document}

\maketitle
\begin{abstract}
Operator learning aims to learn mappings between function spaces and has been widely used for PDE-related problems.
This raises two fundamental questions. 
First, is there a  connection between standard numerical methods and neural operators, given that both are designed to approximate PDE solution operators? 
Second, when a neural operator is trained on numerically generated data, does it approximate the underlying infinite dimension operator, the training data themselves, or the discretized numerical scheme used to generate them? 
A key hypothesis in the paper is that the trained neural operator may mimic the discretized numerical schemes that generate the training data.
The hypothesis has been partially and preliminary verified by the linear approximation principle proposed in Chen Chen 1995 paper and the following scaling law papers.
However, neural operator approximation from the perspective of numerical discretization remains unexplored.
In this work, we consider a class of PDEs and derive neural operator architectures from the numerical schemes. 
Firstly, the numerical basis functions, such as finite element basis functions, can be approximated or directly constructed by learnable neural networks. 
Then the finite difference schemes for temporal discretization can be unrolled into iterative neural architectures that generate the coefficients of the basis of the resulting neural operator. 
By leveraging numerical analysis, we establish convergence error estimates for these numerically derived neural operators. 
Notably, the network size depends polynomial on the approximation error in $L_2$, improving the nested exponential dependence derived in the general neural operator approximation and aligning with the numerical observations.
Finally, numerical experiments demonstrate reduced errors and improved parameter efficiency.
\end{abstract}

\section{Introduction}
Operator learning aims to learn mappings between function spaces using neural networks, which are often referred to as deep neural operators \cite{chen1993approximations,chen1995universal, back2002universal, lu2022comprehensive,lin2023b, lin2023learning, li2020fourier, li2023fourier, wen2022u, o2022derivate, hao2025laplacian, leung2022nh, zhu2026unsupervised, goswami2023physics, moya2025conformalized, meuris2023machine, cao2025spectral}. 
It has broad applications in PDE-related problems and scientific computing \cite{lu2021learning, li2020fourier,howard2023multifidelity,lu2022multifidelity, sun2020neupde,schaeffer2020extracting2,zhang2019convergence, schaeffer2020extracting}. 
For example, for a time-dependent PDE, one can construct a neural operator to approximate the solution map from the initial condition to the corresponding PDE solution. 
When the initial condition is parametrized, training data can be generated by sampling the parameters of the initial conditions and solving the PDE to obtain pairs of initial conditions and their corresponding solutions. 
After training, given a new initial condition from the same parametrized family, the neural operator can directly predict the corresponding solution without repeatedly solving the underlying PDE.
Another important application is multi-fidelity learning or model closure \cite{zhang2024bayesian, howard2023multifidelity, lu2022multifidelity} (downscaling) or model reduction (upscaling), where the goal is to learn mappings between solutions at different fidelities or scales. 
In particular, a neural operator can take as input a coarse-scale solution, which is relatively inexpensive to obtain, together with limited fine-scale observations at selected locations in the domain. Once trained, the neural operator can reconstruct or predict the full fine-scale solution throughout the domain.
Recently, operator-learning principles have also been successfully applied to large-scale scientific computing problems, making neural operators among the few learning-based approaches that have demonstrated strong potential for real-world scientific applications.

Besides the success of operator learning in applications, the theoretical study of operator learning has also made substantial progress. 
Operator learning was in fact first studied in 1990s' from an approximation-theoretic perspective in \cite{chen1993approximations, chen1995universal}, where the authors showed that neural networks can approximate continuous operators for arbitrary input functions. 
In \cite{lanthaler2022error, deng2022approximation}, convergence rates were analyzed for specific PDE solution operators, while \cite{lanthaler2023curse} later established lower bounds for a broad class of operators.
The Fourier neural operator (FNO), a widely used neural operator architecture \cite{li2020fourier, li2022fourier}, has also been studied in terms of universal approximation and error estimates for specific PDEs \cite{kovachki2021universal, de2022cost}, as well as discretization effects \cite{lanthaler2024discretization}. Another widely used class of DNOs employs encoders and decoders \cite{liu2024deepauto, lanthaler2023operator, bhattacharya2021model} to project the input and output functions into latent spaces \cite{liu2024deep}. 
Such approaches naturally rely on an underlying low-dimensional or subspace structure, and some components of these models may be prescribed rather than learned, making a fully end-to-end analysis difficult. 
In \cite{kovachki2023neural}, the authors unified several neural-operator frameworks, including graph neural operators and FNOs, and established a universal approximation result without explicitly deriving convergence rates. 

Another important work from 1990s is \cite{back2002universal}, where the authors generalize the operator learning setting: they use a function to index an operator in a family of operators, building the fundamental analysis work for the modern multi-operator learning \cite{yang2023context, sun2024towards, zhang2024modno}.
More recently, \cite{liu2024neural} established the first end-to-end convergence-rate analysis for general operators that is not tied to a specific PDE, with rates that align with the corresponding lower-bound estimates.
\cite{bayraktar2025deep} later established the error analysis for operator in probabilistic point of view on unbounded domain using probability language.

Despite rapid advances in operator learning, its relationship with the numerical methods that generate most scientific training data remains poorly understood. 
This leads to a basic but largely unanswered question: is a neural operator learning the underlying continuous PDE operator, or the numerical algorithm used to approximate it?
In practical PDE OL problems (particularly the forward problems), the training data are almost always produced by a numerical solver. 
Hence, the network only observes the discrete input-output map generated by that solver. 
From this viewpoint, a trained neural operator is more naturally interpreted as an approximation of a numerical solution operator (numerical schemes) rather than of the exact infinite-dimensional solution operator. 
This perspective provides a possible explanation for the discrepancy between theoretically predicted approximation rates and the convergence behavior often observed computationally, besides the important explanation from the low-dimensional-structure point of view studied in \cite{liu2024neural, liu2024deep,liu2021besov,yang2025deeponet, liu2025generalization, liu2022benefits}.

Once this viewpoint is adopted, the following question emerges: can a neural operator itself be derived from a classical numerical method? 
Such a connection would not only clarify what neural operators learn, but also connect recent operator-learning architectures with the well-developed mathematical structure of numerical PDEs.
Some initial connections have appeared in \cite{zhang2025finite, meuris2023machine}, where neural networks are introduced to construct learnable finite element basis functions and thereby obtain adaptive local representations. 
Nevertheless, the full connection is still missing: classical FEM determines its coefficients through the solution of a discrete algebraic system, whereas neural operators generate their coefficients through trainable networks. 
Moreover, how spatial and temporal discretizations together give rise to the structure of modern neural operators remains largely unexplored.

Now based on the hypothesis that the trained neural operator may mimic the discretized numerical schemes which generate the training data,  the central idea of this work is to reverse the usual perspective: rather than starting from a neural architecture and asking how well it approximates a PDE operator, we start from the numerical discretization and derive the neural operator from it. 
We show that the resulting architecture naturally exhibits the similar mathematical structure as widely used modern neural operators. 
This establishes a direct bridge between classical numerical analysis and operator learning, and suggests a systematic route for numerical-method-driven neural architecture design. 
We further develop convergence theory for the resulting model, quantifying the $L_2$ approximation error in terms of the sizes of its different neural-operator components. The resulting rates recover convergence behavior commonly seen in numerical computations.

The rest of the paper is organized as follows.
In Section \ref{sec_preliminary}, we introduce the background.
In Section \ref{sec_methods}, we introduce the methods, where we will derive a neural operator according to a numerical discretization.
Later in Section \ref{sec_kpp}, we derive the analysis for a specific PDE.
Notably, the analysis methods could be extended to other types of PDEs.
The numerical examples are shown in Section \ref{sec_numerical}.
Lastly, we present the proof of some numerical results in the Appendix \ref{sec_appendix}.

\section{Preliminary}
\label{sec_preliminary}
Throughout, we consider standard fully connected neural networks with ReLU activation \cite{yarotsky2017error}. 
We define the depth of a network as the number of affine layers and the width as the maximum size of its learnable affine layers. 
The size of a network is the number of its nonzero
affine weights and biases.
If a network is constructed by parallel concatenation of $N_p$ subnetworks, its width is taken to be $N_p$ times the width of the widest subnetwork. We use the notation $\sim$ to denote quantities of the same simtotic order.

The ReLU function can create useful operators. 
We will recall clipping.
Clipping can control the function value to be a range.
For a clipping level $M_0>0$, define $\chi_{M_0}[s]:=\clip_{M_0}[s]:=\min\{M_0,\max\{-M_0,s\}\}.$
The clipping function is represented exactly by a small constant size ReLU network:
$\chi_{M_0}[s]= -M_0+\relu(s+M_0)-\relu(s-M_0).$
\textcolor{black}{Moreover, the clipped square map is globally Lipschitz:}
\begin{equation}\label{eqn_clipped_square_lipschitztz}
 \abs{\chi_{M_0}[a]^2-\chi_{M_0}[b]^2}
 \le
 2M_0\abs{a-b}
 \qquad\forall a,b\in\R.
\end{equation}
The following classical results are from Yarotsky.
The approximation of the ReLU network has been studied by Yarotsky in \cite{yarotsky2017error}.
We recall the following two results.

\begin{lemma}[ReLU approximation to multiplication \cite{yarotsky2017error}]\label{lemma_yarotsky_product}
Let $M_0>0$.  
For every $0<\delta<M_0^2$, there exists a scalar ReLU network $R_\delta:\mathbb{R}^2\to\mathbb{R}$ with constant width, and depth $ D_{\rm sq}(\delta)
 \le
 c_0+c_1\log\left(\frac{M_0^2}{\delta}\right)$
such that
\begin{align*}
    \sup_{s, t\in[-M_0, M_0]}\abs{R_\delta(s, t) - st}\le\delta.
\end{align*}
The network may additionally be chosen so that
$R_\delta(s,0)=R_\delta(0,s)=0$
for every $s\in\mathbb R$.
When $s = t$, the network will give us a network which approximates the square.
\end{lemma}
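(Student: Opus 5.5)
The plan is Yarotsky's construction: build a ReLU approximation of $x\mapsto x^2$ on $[0,1]$, then recover the product by polarization, and check at the end that the zero-preserving property holds.

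\emph{Step 1: squaring on $[0,1]$.} Let $g(x)=2\relu(x)-4\relu(x-\tfrac12)+2\relu(x-1)$ be the hat function. For $x\in[0,1]$ it is realized exactly by a width-3 layer. Its $s$-fold composition $g_s=g\circ\cdots\circ g$ is a sawtooth with $2^{s-1}$ teeth. The piecewise linear interpolant of $x^2$ on the uniform grid of step $2^{-m}$ is
\[
f_m(x)=x-\sum_{s=1}^m 4^{-s}g_s(x),
\]
and it satisfies $\sup_{[0,1]}|f_m(x)-x^2|\le 2^{-2m-2}$. This identity is checked by induction on $m$: subtracting $4^{-m}g_m$ corrects the midpoint values of the interpolant on each dyadic cell. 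The sum can be realized by a constant-width network of depth $O(m)$. Each layer carries three quantities forward: the current $g_s$, the running partial sum, and the input $x$, each kept by identity channels $\relu(\cdot)$. This works because $x\ge0$, and the partial sums stay nonnegative since $f_m\ge0$ on $[0,1]$.

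\emph{Step 2: rescaling and polarization.} For $|s|,|t|\le M_0$ use
\[
st=\tfrac{1}{2}\big((s+t)^2-s^2-t^2\big)=2M_0^2\Big(\big(\tfrac{|s+t|}{2M_0}\big)^2-\big(\tfrac{|s|}{2M_0}\big)^2-\big(\tfrac{|t|}{2M_0}\big)^2\Big).
\]
Each argument lies in $[0,1]$, and $|u|=\relu(u)+\relu(-u)$ is exact. So define
\[
R_\delta(s,t)=2M_0^2\Big(f_m\big(\tfrac{|s+t|}{2M_0}\big)-f_m\big(\tfrac{|s|}{2M_0}\big)-f_m\big(\tfrac{|t|}{2M_0}\big)\Big).
\]
Running three copies in parallel keeps the width constant. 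The error is at most $2M_0^2\cdot 3\cdot 2^{-2m-2}$. Choosing $m=\lceil \tfrac12\log_2(6M_0^2/(4\delta))\rceil$ makes this at most $\delta$. The depth is then $c_0+c_1\log(M_0^2/\delta)$, and $\delta<M_0^2$ ensures $m\ge1$, so the bound is meaningful.

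\emph{Step 3: the zero property.} This needs care because inputs are arbitrary $s\in\R$, not just $[-M_0,M_0]$.
\begin{itemize}[leftmargin=*]
\item With $t=0$ the first two terms coincide, so $R_\delta(s,0)=-2M_0^2f_m(0)$ for all $s\in\R$. Since $f_m(0)=0$ exactly ($g(0)=0$), this is $0$.
\item The case $s=0$ is symmetric.
\item For arguments outside $[0,1]$, the composed network still computes some fixed function $F$ of its input. Identical inputs give identical outputs, so the cancellation above holds regardless.
\end{itemize}
One could also clip the inputs first with $\chi_{M_0}$; the same cancellation persists, since $\chi_{M_0}(0)=0$.

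\emph{Step 4: the square.} Setting $s=t$ gives an approximation of $s^2$. Alternatively, one can use $2M_0^2\cdot 2f_m(|s|/2M_0)$ directly.

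\emph{Main obstacle.} The delicate step is the exact identity in Step 1, namely that $f_m$ equals the dyadic interpolant with error $2^{-2m-2}$, together with realizing it at constant width. Both are classical, and I would cite \cite{yarotsky2017error} for the detailed verification.
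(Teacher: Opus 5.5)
Your proposal is correct and follows essentially the same route as the paper. The paper gives no proof of its own beyond citing \cite{yarotsky2017error}, and your sawtooth approximation $f_m(x)=x-\sum_{s=1}^m 4^{-s}g_s(x)$ of $x^2$, combined with polarization at scale $2M_0$ and a careful check that $R_\delta(s,0)=R_\delta(0,s)=0$ for all $s\in\mathbb R$, is exactly Yarotsky's construction.
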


The next lemma is proved by Yarotsky and relies on Lemma \ref{lemma_yarotsky_product}.
\begin{lemma}[ReLU approximation to Sobolev functions \cite{yarotsky2017error}]
\label{lemma_yarotsky}
Let
$F_{n,d}:=\{f\in W^{n,\infty}([0,1]^d):
\|f\|_{W^{n,\infty}([0,1]^d)}\le1\}$.
For integers $d,n\ge1$ and every $\varepsilon\in(0,1)$,
there is a ReLU architecture such that, for every
$f\in F_{n,d}$, its weights can be chosen to produce
a network $\Phi_{f,\varepsilon}$ satisfying
$\|f-\Phi_{f,\varepsilon}\|_{L^\infty([0,1]^d)}
\le\varepsilon$.
The architecture can be chosen with depth at most
$C(d,n)(1+\log(1/\varepsilon))$, width at most
$C(d,n)\varepsilon^{-d/n}$, and size at most
$C(d,n)\varepsilon^{-d/n}(1+\log(1/\varepsilon))$.
\end{lemma}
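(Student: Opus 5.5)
The plan follows Yarotsky's local Taylor construction. Every $f\in F_{n,d}$ is replaced by a piecewise-polynomial approximant built on a uniform grid, and this approximant is then realized by a ReLU network. The architecture is fixed by the grid and $\varepsilon$ alone; only the weights, namely the Taylor coefficients, depend on $f$.

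\textbf{Step 1 (partition of unity).} Fix an integer $N\ge1$. On the grid $\{m/N: m\in\{0,\dots,N\}^d\}$, define hat functions
\[
\phi_m(x)=\prod_{k=1}^d \psi\bigl(3N(x_k-m_k/N)\bigr),
\]
where $\psi$ is the trapezoid that equals $1$ on $[-1,1]$, vanishes outside $[-2,2]$, and is linear in between. Each $\psi$ is an exact ReLU combination. The $\phi_m$ form a partition of unity on $[0,1]^d$, and $\operatorname{supp}\phi_m$ lies in a cube of side $\sim 1/N$ around $m/N$. At each node take the Taylor polynomial of degree $n-1$,
\[
P_m(x)=\sum_{|\alpha|<n}\frac{D^\alpha f(m/N)}{\alpha!}(x-m/N)^\alpha,
\]
which is well defined since $f\in W^{n,\infty}$ (on a convex domain, $W^{n,\infty}$ gives $C^{n-1}$ with Lipschitz top derivatives). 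Set $f_1=\sum_m \phi_m P_m$. Since $\sum_m\phi_m\equiv 1$, at each $x$ only $2^d$ terms are nonzero, and Taylor's remainder gives $|f-P_m|\le C(d,n)N^{-n}$ on $\operatorname{supp}\phi_m$. Hence $\|f-f_1\|_\infty\le C_1(d,n)N^{-n}$. Choose $N\sim(\varepsilon/(2C_1))^{-1/n}$ so that this is at most $\varepsilon/2$.

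\textbf{Step 2 (network realization).} Rewrite $f_1=\sum_m\sum_{|\alpha|<n}a_{m,\alpha}\,\phi_m(x)(x-m/N)^\alpha$ with $|a_{m,\alpha}|\le1$, because $\|f\|_{W^{n,\infty}}\le1$. Each monomial term is a product of at most $d+n-1$ bounded factors: the $d$ one-dimensional $\psi$'s and at most $n-1$ shifted coordinates. Approximate each such product by chaining Lemma~\ref{lemma_yarotsky_product} through a binary tree of multiplications, using $M_0$ a fixed constant because all factors lie in $[-1,1]$, with accuracy $\delta$ per multiplication. The errors propagate with a constant factor per level, since the inputs stay bounded by clipping. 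The zero-preserving property $R_\delta(s,0)=0$ ensures that each approximate product still vanishes outside $\operatorname{supp}\phi_m$. So at any $x$ only the $2^d$ nearby terms contribute error, giving total error at most $2^d\binom{d+n-1}{d}C\delta$. Choosing $\delta\sim\varepsilon$ makes this at most $\varepsilon/2$, and each product subnetwork then has depth $O(\log(1/\varepsilon))$ and constant width.

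\textbf{Step 3 (counting).} There are $(N+1)^d\binom{d+n-1}{d}\sim\varepsilon^{-d/n}$ product subnetworks. Placing them in parallel and adding a final linear layer with the weights $a_{m,\alpha}$ gives width $C\varepsilon^{-d/n}$, depth $C(1+\log(1/\varepsilon))$, and size $C\varepsilon^{-d/n}(1+\log(1/\varepsilon))$, matching the statement.

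The main obstacle is Step 2's error accounting. The multiplication errors must not accumulate over all $\sim\varepsilon^{-d/n}$ terms; only the $O(1)$ locally active ones may contribute. This is exactly where the property $R_\delta(s,0)=0$ of Lemma~\ref{lemma_yarotsky_product} is needed. I would check it carefully by ordering the product tree so that a $\psi$ factor, which is zero off the support, enters every partial product.
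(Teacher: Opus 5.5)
Your proposal is correct and is essentially Yarotsky's original proof (partition of unity by trapezoid products, local degree-$(n-1)$ Taylor polynomials, and zero-preserving approximate multiplication so that only the $2^d$ locally active terms contribute error); the paper gives no proof of its own and simply cites \cite{yarotsky2017error}. The only differences are cosmetic: you use a binary multiplication tree with clipping, where Yarotsky uses a sequential chain of multiplications with range $M=d+n$, and you derive the width bound explicitly from the parallel placement of the product subnetworks.
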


\subsection{Neural operator}
\label{sec_methods}

Let ${U}$ and ${V}$ denote the input and output function spaces defined on domains $\Omega_u\subset \mathbb{R}^{d_1}$ and $\Omega_v\subset \mathbb{R}^{d_2}$, respectively, and let $G:{U}\rightarrow{V}$ be an operator. 
The goal of operator learning is to construct a neural operator $G_{\theta}$, parametrized by $\theta$, such that
$|G(u)(x)-G_{\theta}(\hat{u})(x)|$
is small for all $u\in{U}$ and $x\in\Omega_v$, here $\hat{u}$ is a discretization of $u$.

To the best of our knowledge, Chen and Chen \cite{chen1995universal, chen1993approximations, back2002universal} first proposed a neural-network structure for approximating nonlinear operators. 
Specifically, for a continuous operator $G$, under suitable assumptions on $U$ and $V$, they showed that for any $\varepsilon>0$, there exist neural networks $p_j$ and $b_j$ such that
\begin{align}
    G_{\theta}(u)(x)
    = \sum_{j=1}^{N} p_j(\hat{u})\, b_j(x),
    \label{eqn_cc_neural_operator}
\end{align}
approximates $G(u)(x)$ uniformly for admissible $u\in{U}$ and $x\in\Omega_v$, here $\hat{u}$ is a discretization of $u$.
In \cite{liu2024neural}, the authors established the first quantitative convergence-rate analysis for this general operator-learning framework and showed that, to achieve an accuracy $0<\varepsilon<\frac{1}{2}$, the required network size scales as $\varepsilon^{-\varepsilon^{-C}}$, where $C$ depends on $d_1$, $N\sim \varepsilon^{-d_2}$, and the size of the discretization $N_1 \sim \varepsilon^{-d_1}$.
More recently, \cite{yang2026generalization} further analyzed the convergence rate by incorporating the smoothness of the input and output functions.

These theoretical rates are substantially slower than the power-law scaling behavior commonly observed in numerical experiments \cite{lu2021learning}.
One possible explanation is that most numerical studies consider specific PDE solution operators, which may possess additional structure not assumed in general operator approximation theory. 
Moreover, the training data are often generated from low-dimensional parametrizations \cite{liu2024deep}.
For example, input functions are frequently sampled from a truncated Fourier expansion by varying only a finite number of coefficients. 
Consequently, such numerical experiments already impose a low-dimensional structure on the input function space, which can lead to significantly faster observed convergence rates.

\section{Methodology}
We introduce the methodology in this section.
We will study the semilinear parabolic problem and particularly considering the forward Euler scheme in the temporal discretization.
We then use the proposed discretized scheme in weak form to derive a neural operator. 
Note that, different temporal discretization schemes also works.
We choose the forward Euler for the demonstration purpose;
besides, this conditional stable scheme will connect mesh size $h$ and $\Delta t$ through CFL so that we could derive a better analysis results that connect all parts of the error in the next section.
However, to derive neural operator structures, other temporal finite difference schemes also work in similar ways.

\label{sec_fem_general}
\subsection{General semilinear parabolic problem and weak form}
Let $\Omega = [0,1]^d\subset\R^d$ be a bounded Lipschitz domain and let $T>0$.  Consider
\begin{align*}
    \begin{cases}
 v_t-\Delta v=g(v)+f(x,t), & \text{in } \Om\times(0,T),\\
 v=0, & \text{on } \partial\Om\times(0,T),\\
 v(\cdot,0)=v_0, & \text{in } \Om.
\end{cases}
\end{align*}
We assume that $f\in L^2(0,T;C(\overline{\Omega}))$
and that $g:\mathbb R\to\mathbb R$ is locally Lipschitz.
Let $ V:=H_0^1(\Om).$
Multiplying the equation by a test function $w\in V$, integrating over $\Om$, and integrating the Laplacian term by parts gives
\begin{equation}\label{eq:general-weak}
 (v_t,w)+(\nabla v,\nabla w)=(g(v),w)+(f,w)
\end{equation}
where $(a,b):=\int_\Om a(x)b(x)\,dx.$

\subsection{Space and temporal discretizations.}
Let $\{\mathcal T_h\}_h$ be a family of conforming finite element
meshes of $\Omega$.
\textcolor{black}{For an element $K$, let $ h_K:=\operatorname{diam}(K),
 h:=\max_{K\in\T}h_K$.}
For vertex $x_i$ in the mesh, we define $m_i = |\{K\in \mathcal{T}_h: x_i\in K\}|$, then we define  $k_h = \max_{x_i} m_i.$
For simplicial elements, let $\rho_K$ denote the radius of the largest ball contained in $K$.  
We assume the mesh family is regular and quasi-uniformity, i.e., $ \frac{h_K}{\rho_K}\le C_{\rm shape}$
for all elements and all mesh sizes, with $C_{\rm shape}$ independent of $h$, and there exists $c_{\rm qu}>0$, independent of $h$, such that $ c_{\rm qu}h\le h_K\leq h$ for all $K\in\T.$
\medskip

Let $ V_h=\Span\{\phi_1,\dots,\phi_N\}\subset H_0^1(\Om)$
be \textcolor{black}{a conforming finite element space.}  
Write the semidiscrete solution as
\begin{equation}\label{eqn_general_fem_expansion}
 v_h(x,t)=\sum_{j=1}^N (u_h(t))_j\phi_j(x),
\end{equation}
where $u_h(t)\in\R^N$ is the coefficient vector, where we denote $j-$th entry as $(u_h(t))_j = u_j$ for simplicity.
Define the mass matrix $M\in\R^{N\times N}$, stiffness matrix $S\in\R^{N\times N}$,
\textcolor{black}{nonlinear vector $r(u_h)\in\R^N$, and source vector $F(t)\in\R^N$ by}
$ M_{ij}:=\int_\Om \phi_j\phi_i\,dx,
 S_{ij}:=\int_\Om \nabla\phi_j\cdot\nabla\phi_i\,dx,$
 $ r_i(u_h):=\int_\Om g\left(\sum_{j=1}^N u_j\phi_j\right)\phi_i\,dx$, and
 $F_i(t):=\int_\Om f(x,t)\phi_i(x)\,dx.$
Substituting \eqref{eqn_general_fem_expansion} into \eqref{eq:general-weak} and testing with $w_h=\phi_i$ gives
\begin{equation}\label{eq:general-semi-discrete}
 M u_h'(t)+S u_h(t)=r(u_h(t))+F(t).
\end{equation}

There are different choices for the time discretization, throughout the manuscript, we will use the conditional stable forward Euler scheme because the restriction on time discretization and space discretization of the scheme could connect to derive the convergence estimate of the network size. 
However, for the neural operator design, any scheme should work under the similar derivation below.

Let $t_n=n\dt, n=0,1,\dots,N_t, T=N_t\dt.$
Let \textcolor{black}{$u_h^n\in\R^N$} denote the fully discrete coefficient vector at time $t_n$;
besides, define its finite element reconstruction
by $v_h^n(x):=\sum_{j=1}^N(u_h^n)_j\phi_j(x)$.
\textcolor{black}{Since $f\in L^2(0,T;C(\overline{\Omega}))$, we then define}
\begin{equation}\label{eq:F-time-average}
 F_i^n:=\frac1{\dt}\int_{t_n}^{t_{n+1}}\int_\Om f(x,t)\phi_i(x)\,dxdt,
 \quad i=1,\dots,N, \quad F^n = [F_1^n, ..., F_N^n]^{\intercal}.
\end{equation}
The Euler step for \eqref{eq:general-semi-discrete} is $ M\frac{u_h^{n+1}-u_h^n}{\dt}+S u_h^n=r(u_h^n)+F^n.$
Multiplying by $\dt$, moving known terms to the right-hand side, and applying $M^{-1}$ gives
\begin{align*}
     u_h^{n+1}
 =
 \left(I-\dt M^{-1}S\right)u_h^n
 +
 \dt M^{-1}r(u_h^n)
 +
 \dt M^{-1}F^n.
\end{align*}
For the analysis part, we will assume the scheme is stable by enforcing the following CFL conditions. 
Note, for the neural operator structural design, no CFL (it is used only in the analysis) is needed if the scheme is unconditional stable.
\begin{assumption}
\label{assumption_cfl} 
We will assume the following CFL conditions.
\begin{align*}
     \dt\lesssim h^2, \text{ and } N_t = \frac{T}{\Delta t}\gtrsim  h^{-2}.
\end{align*}
To the diffusion update is nonexpansive in the standard mass norm (defined in Section \ref{sec_norms}) $\|(I-\Delta tM^{-1}S)a\|_M\le \|a\|_M, a\in\mathbb R^N$, we will enforce $ \dt\lambda_{\max}(M^{-1}S)\le 2$. 
\textcolor{black}{Using the inverse inequality, it follows that $ \lambda_{\max}(M^{-1}S)\le C h^{-2}.$}
A derivation is introduced in the Appendix \ref{section_cfl_appendix} for the completeness of the manuscript.
For the network complexity estimates below, we additionally
choose $\Delta t\sim h^2$ subject to
Assumption~\ref{assumption_cfl}, so that $N_t\sim h^{-2}$.
\end{assumption}

\subsection{Quadrature representation for the nonlinear terms and FEM solution.}
\label{sec_quadrature}
We use quadrature to evaluate the nonlinear integrals.
Let the global quadrature points and weights be
$\{x_q,\omega_q\}_{q=1}^Q$.
The algebraic representation below applies to any quadrature
rule for which the required point evaluations are defined.
For the analysis, we assume $\omega_q>0$, together with the
additional quadrature conditions stated in
Assumption~\ref{assumption_quadrature}.
Positive weights are not automatic for every high-order
quadrature rule.
The following remark gives an example.

\begin{remark}
\label{remark_quadrature_example}
A standard example with positive weights is the
Gauss--Legendre rule.
On the reference interval $[-1,1]$, the $n_G$-point
Gauss--Legendre rule has nodes $\xi_1,\ldots,\xi_{n_G}$,
the roots of the Legendre polynomial $P_{n_G}$, and weights
$\widehat\omega_i
=
\frac{2}{(1-\xi_i^2)[P_{n_G}'(\xi_i)]^2},
\quad i=1,\ldots,n_G.$
Hence $\widehat\omega_i>0$.
The rule satisfies
\begin{align*}
\int_{-1}^1 p(\xi)\,d\xi
=
\sum_{i=1}^{n_G}\widehat\omega_i p(\xi_i),
\quad
\forall p\in\mathbb P_{2n_G-1}([-1,1]).
\end{align*}
On a physical interval $[a,b]$, the affine map
$x=(a+b)/2+(b-a)\xi/2$ gives nodes $x_i$ and positive
weights $\omega_i=(b-a)\widehat\omega_i/2$.
Therefore, to integrate all one-dimensional polynomials
of degree at most $3r$ exactly, it is enough to choose
$n_G\ge\left\lceil(3r+1)/2\right\rceil$.
The degree-$3r$ requirement is used in the Fisher--KPP
analysis below, since $v_h^2\phi_i$ has degree at most
$3r$ on each affine simplicial element for $P_r$
finite elements.
\medskip

On affine tensor-product quadrilateral or hexahedral
elements, one takes the tensor product of this
one-dimensional rule.
The tensor-product weights remain positive, and the rule
is exact for polynomials whose degree in each reference
coordinate is at most $2n_G-1$.
With the above choice of $n_G$, this includes polynomials
of degree at most $3r$ in each reference coordinate,
as required for products of three $Q_r$ finite element
functions.
On simplex elements, one may instead choose any positive
elementwise quadrature rule exact on $\mathbb P_{3r}$,
when such a rule is available for the desired degree
and element type.
Positivity and degree-$3r$ exactness are stated explicitly
in Assumption~\ref{assumption_quadrature}.
\end{remark}

We now define the evaluation matrix $B\in\mathbb R^{Q\times N}$
and the quadrature weight matrix $W\in\mathbb R^{Q\times Q}$ by
\begin{align}\label{eq_general_B_W}
B_{qj}:=\phi_j(x_q),
\quad
W:=\operatorname{diag}(\omega_1,\dots,\omega_Q).
\end{align}
For any vector $u\in\mathbb R^N$,
$(Bu)_q=\sum_{j=1}^N u_j\phi_j(x_q)$.
Thus, if $v_h=\sum_j u_j\phi_j$, then
$(Bu)_q=v_h(x_q)$.

To remain consistent with \eqref{eq:F-time-average},
define the time-averaged source by
$\bar f^n(x):=\Delta t^{-1}\int_{t_n}^{t_{n+1}}f(x,t)\,dt$,
for $n=0,\ldots,N_t-1$.
Under the assumption
$f\in L^2(0,T;C(\overline{\Omega}))$,
these averages have well-defined spatial point values.
Let $f_Q^n\in\mathbb R^Q$ denote the corresponding sampled
source vector, with $(f_Q^n)_q:=\bar f^n(x_q)$.
\medskip

We now derive the matrix quadrature formula.
For $v_h=\sum_j(u_h)_j\phi_j$, the nonlinear vector satisfies
$r_i(u_h)=\int_\Omega g(v_h(x))\phi_i(x)\,dx$.
Applying the quadrature rule gives
$r_i(u_h)\approx
\sum_{q=1}^Q\omega_q g(v_h(x_q))\phi_i(x_q)$.
Since $v_h(x_q)=(Bu_h)_q$ and $\phi_i(x_q)=B_{qi}$,
this becomes
$r_i(u_h)\approx
\sum_{q=1}^Q B_{qi}\omega_q g((Bu_h)_q)$.
The right-hand side is precisely the $i$-th component
of $B^TWg(Bu_h)$, where $g$ acts componentwise.
Thus $r(u_h)\approx B^TWg(Bu_h)$.
Similarly,
\begin{align*}
F_i^n
&=\int_\Omega \bar f^n(x)\phi_i(x)\,dx \approx
\sum_{q=1}^Q\omega_q\bar f^n(x_q)\phi_i(x_q), \text{ or } F^n\approx B^TWf_Q^n.
\end{align*}

Denote $A_h:=I-\Delta tM^{-1}S$ and
$C_h:=\Delta tM^{-1}B^TW$.
Replacing the nonlinear and source loads by these quadrature
approximations defines the quadrature-based Euler scheme.
The update is then,
\begin{align}
\label{eqn_one_branch_block}
u_h^{n+1}
=
A_hu_h^n+C_hg(Bu_h^n)+C_hf_Q^n
=:L_n(u_h^n;f_Q^n).
\end{align}
Thus one numerical time step has the structure of a shallow
network block: an affine evaluation map $u\mapsto Bu$,
a scalar nonlinearity applied componentwise, an affine
recombination map $z\mapsto C_hz$, a residual linear branch
$u\mapsto A_hu$, and a source branch $f_Q^n\mapsto C_hf_Q^n$.
This block uses $g$ as its scalar nonlinearity.
In the ReLU construction below, the nonlinear evaluation
is replaced by a suitable ReLU approximation.

By composition at $t=t_m$,
\begin{align}\label{eqn_general_composition}
u_h^m
=
L_{m-1}(\cdot;f_Q^{m-1})
\circ\cdots\circ
L_0(\cdot;f_Q^0)(u_h^0).
\end{align}
The fully discrete FEM solution at $t=t_m$ is then
$v_h^m(x)=\sum_{j=1}^N(u_h^m)_j\phi_j(x)$.

\subsubsection{Network structure}
\label{sec_nn_structure}
Notably, the above representation has the same structural form as the operator approximation in \eqref{eqn_cc_neural_operator}. 
In the language of operator learning, the finite element coefficients $(u_h^m)_j$ play the role of the coefficient maps $p_j$, carrying the dependence on the input function (the initial condition and the source term if the PDE has), while the finite element basis functions $\phi_j$ play the role of the output basis functions $b_j$.

More precisely, if the source term varies across tasks, the operator input is 
$u =(v_0,f)$; if the source term is fixed across the data set, the input reduces to $u = v_0$. 
Using \eqref{eqn_general_composition}, the finite element approximation at time $t_m$ can therefore be written in the operator-learning form
\begin{equation*}
G(v_0,f)(x,t_m)
\approx
\sum_{j=1}^N
\left[
L_{m-1}(\cdot;f_Q^{m-1})\circ\cdots\circ L_0(\cdot;f_Q^0)(u_h^0)
\right]_j
\phi_j(x).
\end{equation*}
Accordingly, the coefficient maps are
\begin{equation*}
p_j(v_0,f)
:=
\left[
L_{m-1}(\cdot;f_Q^{m-1})\circ\cdots\circ L_0(\cdot;f_Q^0)(u_h^0)
\right]_j.
\end{equation*}
Thus, the finite element basis functions $\phi_j$ provide the output basis $b_j$, while the time-unrolled numerical scheme defines the coefficient maps $p_j$.


To construct a neural operator based on FEM, we need neural-network realizations of both $L_n$ and the finite element basis functions $\phi_j$.
First, for $L_n$ in \eqref{eqn_one_branch_block}, the matrices $A_h$, $C_h$, and $B$ are replaced by learnable linear maps, while the nonlinear term $g$ is represented by nonlinear neural-network layers. 
The Fisher--KPP equation provides an example for the nonlinear network approximation for $g$; see Equation \eqref{def_network_predicted_traj} and Lemma \ref{lemma_yarotsky_product} for a mathematically justified construction of a special network block for approximating $g$. 
However, in general implementations, $g$ can be represented by a shallow fully connected neural network. See Figure \ref{fig_one_layer} for the demonstration of the one learnable layer $L_n$.
\begin{figure}[H]
    \centering
    \includegraphics[width=0.8\linewidth]{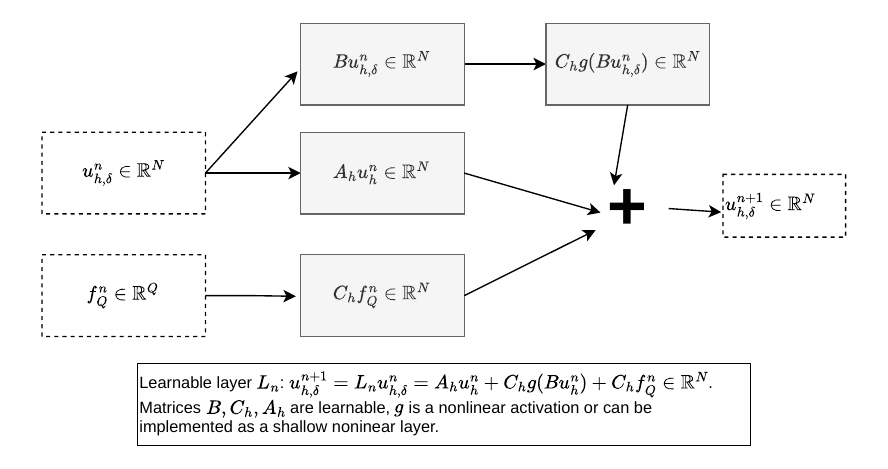}
    \caption{One learnable layer. See figure \ref{fig_full_nn} for the entire network structure.}
    \label{fig_one_layer}
\end{figure}
\begin{figure}[H]
    \centering
    \includegraphics[width=0.8\linewidth]{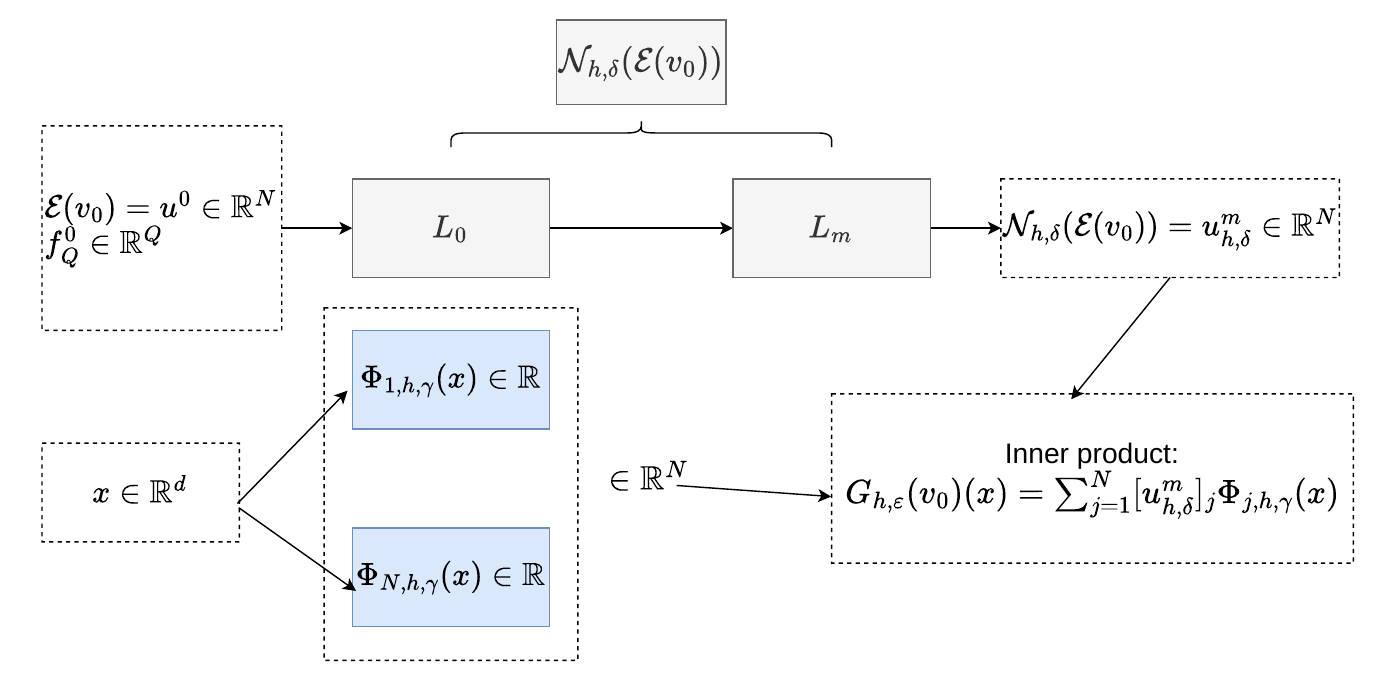}
    \caption{Entire network $\mathcal{G}_{h, \varepsilon}(\cdot)(\cdot)$. 
See figure \ref{fig_one_layer} for the demonstration of one learnable layer $L_n$. 
    Notably, $P_1$ FEM can be constructed by ReLU network exactly, see \cite{he2020relu}. 
    Besides, compared to neural operators supported by the approximation principle \ref{eqn_cc_neural_operator} and theory \cite{chen1995universal,liu2024neural}, the proposed mathematically verified method does not need separate individual neural network to generate each individual coefficient $[u_{h, \delta}^m]_j$, which allows computations to be shared among the coefficient outputs.
    Unstacked DeepONet \cite{lu2021learning} similarly generates
all basis coefficients through a single branch network.
The present work complements this design with explicit bounds on neural scaling results.
    }
    \label{fig_full_nn}
\end{figure}

Second, we need a neural-network realization of the finite element basis functions $\phi_j$. 
For $P_1$ FEM, or equivalently $Q_1$ FEM in one dimension, each basis function $\phi_j$ can be represented exactly by a ReLU neural network \cite{he2020relu}; see Lemma \ref{lemma_jinchao_xu}. 
For higher-order FEM, the basis functions $\phi_j$ can be approximated by ReLU neural networks; see Lemma \ref{lemma_basis_network_nonlocal} and Lemma \ref{lemma_basis_network_local} for two different constructions.
The resulting entire network architecture is illustrated in Figure \ref{fig_full_nn}.

\section{Fisher--KPP Study and Analysis}\label{sec_kpp}
Starting from this section, we will study the error analysis for a nonlinear equation with $ g(v)=v-v^2$ and $f = 0$.
Similar methods can be applied to other analyze other equations with different numerical discretization in time when the numerical scheme error can be established.
We separate out the linear term $v$ and define
\begin{align*}
     r(u_h)=Mu_h-q(u_h),
 \quad
 q_i(u_h):=\int_\Om \left(\sum_{j=1}^N u_j\phi_j\right)^2\phi_i\,dx,
\end{align*}
then $q(u_h) = [q_1(u_h), ..., q_N(u_h)]^{\intercal}$.
The semidiscrete system becomes $M u_h'(t)+S u_h(t)=M u_h(t)-q(u_h(t)),$
or equivalently $M u_h'(t)+(S-M)u_h(t)=-q(u_h(t)).$
At time $t_n=n\dt$, let $u_h^n\in\R^N$ denote the fully discrete Fisher--KPP coefficient vector.  
The forward Euler discretization is
$ M\frac{u_h^{n+1}-u_h^n}{\dt}+S u_h^n=M u_h^n-q(u_h^n).$
Rearranging gives $u_h^{n+1}
 =
 \left(I+\dt I-\dt M^{-1}S\right)u_h^n
 -
 \dt M^{-1}q(u_h^n).$
Define $L_h$, we then have the update,
\begin{equation}\label{eq_kpp_Lh}
 L_h:=I+\dt I-\dt M^{-1}S, \quad
  u_h^{n+1}=L_hu_h^n-\dt M^{-1}q(u_h^n).
\end{equation}

\subsection{Quadrature in the Fisher--KPP update}\label{subsec_kpp-quad}
Denote $M_Q\in\mathbb{R}^{N\times N}$ as the quadrature approximation to the mass matrix. 
Specifically, $ (M_Q)_{ij}:=\sum_{q=1}^Q\omega_q\phi_j(x_q)\phi_i(x_q).$
Using $B_{qj}=\phi_j(x_q)$ and $W=\diag(\omega_1,\dots,\omega_Q)$, we have
\begin{align}\label{eq:MQ}
 (B^TWB)_{ij}
 =
 \sum_{q=1}^Q B_{qi}\omega_qB_{qj}
 =
 \sum_{q=1}^Q\omega_q\phi_i(x_q)\phi_j(x_q)
 =
 (M_Q)_{ij}.
\end{align}
Thus $ M_Q=B^TWB.$
If the quadrature rule is exact for the products $\phi_i\phi_j$, then $M_Q=M = B^TWB$.  
For affine elements with $\mathbb{P}_r$ basis functions, $\phi_i\phi_j$ is a polynomial of degree at most $2r$ on each element; hence elementwise quadrature exactness through degree $2r$ implies $B^TWB=M$.  Below we impose the stronger degree-$3r$ exactness condition because it also makes the Fisher--KPP square term exact.  
\begin{assumption}[Quadrature]
\label{assumption_quadrature}
Let $\widehat K$ be the reference element. 
We fix a quadrature
rule $\{(\widehat x_\ell,\widehat\omega_\ell)\}_{\ell=1}^{n_q}$
that is exact for $\mathbb P_{3r}(\widehat K)$ and has positive
weights $\widehat\omega_\ell>0$. 
The number $n_q$ may depend
on the fixed polynomial degree $r$ and dimension $d$, but
is independent of $h$.
On each element $K\in\mathcal T_h$, we use the affine image
of this reference rule.
\end{assumption}
The resulting quadrature rule on each element $K$ has positive
weights and is exact for $\mathbb P_{3r}(K)$.
Let $\{(x_q,\omega_q)\}_{q=1}^Q$ denote the assembled
elementwise quadrature entries, counting shared locations
separately. Then $Q=n_q\#\mathcal T_h$.
Since the mesh family is shape-regular and quasi-uniform,
$Q\sim h^{-d}$, with constants independent of $h$.

For the nonlinear square term, let $u_h\in\mathbb R^N$
and set $v_h=\sum_{j=1}^N(u_h)_j\phi_j$.
For each element $K$, let $Q_K$ denote the indices of
the quadrature entries assigned to $K$, and define
$(q_Q^K(u_h))_i
:=
\sum_{q\in Q_K}\omega_q v_h(x_q)^2\phi_i(x_q).$
Since $v_h|_K,\phi_i|_K\in\mathbb P_r(K)$, we have
$v_h^2\phi_i\in\mathbb P_{3r}(K)$.
Assumption~\ref{assumption_quadrature} therefore gives
$(q_Q^K(u_h))_i=\int_K v_h(x)^2\phi_i(x)\,dx$.
Summing over all elements yields
\begin{align*}
    (q_Q(u_h))_i
=
\sum_{q=1}^Q\omega_q(Bu_h)_q^2B_{qi}
=
\left[B^TW(Bu_h)^{\odot2}\right]_i
=
q_i(u_h),
\end{align*}
where $(Bu_h)^{\odot2}$ denotes componentwise squaring.
Consequently, the Assumption \ref{assumption_quadrature} implies the following.
\begin{equation}\label{eq:kpp-exact-quadrature-consequences}
 B^TWB=M,
 \quad
 q_Q(u_h)=q(u_h),
 \quad
 \sum_{q=1}^Q\omega_q=|\Omega|.
\end{equation}

\begin{remark}[What changes if $M_Q\ne M$]
The formula $L_h=I+\dt I-\dt M^{-1}S$ contains the linear reaction contribution coming from the $+v$ part of $v-v^2$.  If the quadrature is not mass-exact, then the quadrature approximation of the linear reaction vector is $M_Qu$, not $Mu$.  The quadrature-based KPP update would then be
\begin{align*}
     u_h^{n+1}
 =
 \left(I-\dt M^{-1}S+\dt M^{-1}M_Q\right)u_h^n
 -
 \dt M^{-1}B^TW(Bu_h^n)^{\odot2}.
\end{align*}
Equivalently, the difference $\dt M^{-1}(M_Q-M)u_h^n$ would be an additional quadrature consistency term.  In the rest of this note we do not pursue this case; we assume degree-$3r$ exact quadrature, so $M_Q=M$ and $q_Q=q$.
\end{remark}

Now with a slight abuse of notation, we redefine $C_h$ to absorb
the negative sign of the square reaction term:
\begin{equation}\label{eq_kpp_Ch}
 C_h:=-\dt M^{-1}B^TW\in \mathbb{R}^{N\times Q}
\end{equation}
Thus $C_h$ maps a vector of quadrature-point values in $\R^Q$ to a coefficient vector in $\R^N$.
Then the iterative update \eqref{eq_kpp_Lh} becomes
\begin{equation}\label{eq:kpp-one-step-final}
 u_h^{n+1}=L_hu_h^n+C_h(Bu_h^n)^{\odot2}.
\end{equation}
Let $ \Psi_h(u):=L_hu+C_h(Bu)^{\odot2}.$
Then $    u_h^{N_t}=\underbrace{\Psi_h\circ\cdots\circ\Psi_h}_{N_t\text{ times}}(u_h^0).$
The associated finite element function at time step $n$, namely at physical time $t_n=n\dt$, is $ v_h^n(x):=\sum_{j=1}^N (u_h^n)_j\phi_j(x).$
\textcolor{black}{The notation $u_j^n$, sometimes used informally, means the $j-$th component $(u_h^n)_j$, but we avoid it here to prevent ambiguity.}

\subsection{Preparation for analysis: norms and operator bounds}
\label{sec_norms}
In this section we estimate all norms of the matrices defined before.
For a coefficient vector $e\in\R^N$, define the standard mass-matrix norm
$\norm{e}_M:=\sqrt{e^TMe}.$
Let $\calR_he:=\sum_{j=1}^N e_j\phi_j\in V_h$, with $e$ feasible.
Then the $M$-norm of the coefficient vector is exactly the $L^2(\Omega)$-norm of the corresponding finite element function:
\begin{align}
 \norm{\calR_he}_{L^2(\Om)}^2
 =
 \int_\Omega
 \left(\sum_{j=1}^N e_j\phi_j\right)
 \left(\sum_{i=1}^N e_i\phi_i\right)dx 
 =
 \sum_{i,j=1}^N e_i e_j\int_\Omega \phi_j\phi_i\,dx 
 =
 e^TMe
 =
 \norm{e}_M^2.
 \label{eq:M-L2}
\end{align}

Similarly, we use the quadrature weight matrix $W$ \eqref{eq_general_B_W} to define the $W-$norm.  
Since the quadrature weights are assumed positive in Assumption \ref{assumption_quadrature},
$\norm{z}_W:=\sqrt{z^TWz}=\biggl(\sum_{q=1}^Q\omega_q z_q^2\biggr)^{1/2}$
\textcolor{black}{defines a norm on $\R^Q$. } 
This is the quadrature analogue of an $L^2$-norm for values stored at quadrature points.

We norm define the induced norms for operator $L_h$ in \eqref{eq_kpp_Lh}, $C_h$ in \eqref{eq_kpp_Ch} and $B$ in \eqref{eq_general_B_W} as follows.
\begin{align}\label{eqn_operator_norms_1}
 \norm{L_h}_{M\to M}:=\sup_{e\ne0}\frac{\norm{L_he}_M}{\norm{e}_M},
 \quad
 \norm{C_h}_{W\to M}:=\sup_{z\ne0}\frac{\norm{C_hz}_M}{\norm{z}_W},
 \quad
  \norm{B}_{M\to W}:=\sup_{e\ne0}\frac{\norm{Be}_W}{\norm{e}_M}.
\end{align}
These choices are natural for the two maps that appear in one time step.  
The following lemma gives an estimate on the operator norms of $L_h, C_h$ and $B$.

\begin{lemma}[Basic stability bounds]\label{lemma_numerical_operator_bounds}
Assume the Assumption \ref{assumption_quadrature} hols.
Then we have $ \norm{B}_{M\to W}=1$, $\norm{C_h}_{W\to M}\le \dt$, $ \sqrt{\Tr(W)}=|\Om|^{1/2}.$
Finally, if the explicit diffusion CFL condition $\dt\lambda_{\max}(M^{-1}S)\le2$ holds, then $ \norm{L_h}_{M\to M}\le1+\dt.$
\end{lemma}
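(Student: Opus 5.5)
The plan is to reduce everything to the identity $B^TWB=M$ from \eqref{eq:kpp-exact-quadrature-consequences}, which holds under Assumption~\ref{assumption_quadrature}. Each claim then follows from elementary linear algebra in the $M$- and $W$-inner products.

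\textbf{Bound on $B$.} For $e\in\R^N$ we have
\begin{align*}
\norm{Be}_W^2=e^TB^TWBe=e^TMe=\norm{e}_M^2 .
\end{align*}
So $B$ is an isometry from $(\R^N,\norm{\cdot}_M)$ to $(\R^Q,\norm{\cdot}_W)$, and hence $\norm{B}_{M\to W}=1$ (taking any $e\neq 0$).

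\textbf{Bound on $C_h$.} The idea is to recognize $C_h$ as $-\dt$ times the adjoint of $B$ with respect to these inner products. Indeed, for $e\in\R^N$ and $z\in\R^Q$,
\begin{align*}
(M^{-1}B^TWz)^TMe=z^TWBe .
\end{align*}
Set $y:=M^{-1}B^TWz$ and test with $e=y$:
\begin{align*}
\norm{y}_M^2=(Wz)^TBy\le\norm{z}_W\norm{By}_W=\norm{z}_W\norm{y}_M .
\end{align*}
The inequality is Cauchy--Schwarz in the $W$-inner product, which is valid since $\omega_q>0$; the last equality uses the isometry just proved. Dividing by $\norm{y}_M$ gives $\norm{y}_M\le\norm{z}_W$, and therefore $\norm{C_h}_{W\to M}\le\dt$.

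\textbf{Trace of $W$.} By definition, $\Tr(W)=\sum_q\omega_q$. Exactness of the quadrature on constants (degree $0\le 3r$) on each element, summed over the elements, gives $\sum_q\omega_q=|\Om|$. This is exactly the third identity in \eqref{eq:kpp-exact-quadrature-consequences}.

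\textbf{Bound on $L_h$.} Write $L_h=(1+\dt)I-\dt M^{-1}S$. We have $\norm{I}_{M\to M}=1$, so the task is to show that $A_h:=I-\dt M^{-1}S$ satisfies $\norm{A_h}_{M\to M}\le1$. Then the triangle inequality, applied to $L_h=A_h+\dt I$, gives $\norm{L_h}_{M\to M}\le 1+\dt$.

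To control $A_h$, note that $M^{-1}S$ is self-adjoint and positive semidefinite in the $M$-inner product, because $S$ is symmetric positive semidefinite. This lets us diagonalize it with an $M$-orthonormal eigenbasis, i.e.\ $S v=\lambda M v$ with $\lambda\in[0,\lambda_{\max}]$. In this basis, $A_h$ acts by the factors $1-\dt\lambda$.

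The CFL hypothesis $\dt\lambda_{\max}\le2$ gives $1-\dt\lambda\in[-1,1]$ for every eigenvalue. Hence $\norm{A_h}_{M\to M}=\max_\lambda|1-\dt\lambda|\le1$.

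\textbf{Main obstacle.} This last step is the only one that is not a one-line identity. One must justify $M$-orthogonal diagonalizability, which can be done via the symmetric matrix $M^{-1/2}SM^{-1/2}$, and express the operator norm in terms of its eigenvalues. Everything else is a direct consequence of $B^TWB=M$ and positivity of the quadrature weights.
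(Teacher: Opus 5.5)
Your proposal is correct and follows essentially the same route as the paper. The shared steps are the isometry $\norm{Be}_W=\norm{e}_M$ from $B^TWB=M$, the adjoint-plus-Cauchy--Schwarz bound for $C_h$, constant-exactness of the quadrature for $\operatorname{tr}(W)$, and the $M$-orthonormal generalized eigenbasis for $L_h$. The differences are cosmetic: for $C_h$ you test with $y=M^{-1}B^TWz$ itself where the paper invokes the dual supremum formula for $\norm{\cdot}_M$, and for $L_h$ you bound $I-\Delta t M^{-1}S$ by $1$ and add $\Delta t\, I$ via the triangle inequality where the paper bounds the eigenvalues $1+\Delta t-\Delta t\lambda_j$ of $L_h$ directly.
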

The proof uses the standard numerical techniques, we leave the proof to Appendix \ref{sec_stability_bnds}.

\subsection{Predicted coefficient trajectory error}\label{sec_network_error}
In this section, we will first analyze the error in approximating the iterative coefficients.
The key point is that we do not assume a pointwise bound on the network iterates.  
We assume that the reference finite element solution trajectory is bounded at the quadrature points.  
The clipped square map is globally Lipschitz, so the perturbation argument remains valid even if $Bu_{h,\delta}^n$ leaves the interval $[-M_0,M_0]$, where $u_{h,\delta}^n$ is the network approximation to $u_h^n$.

\begin{assumption}[Reference numerical boundedness at quadrature points]\label{assumption_reference_bounded}
There exists $M_*>0$, independent of $h$, $\dt$, $n$, $\delta$, and the initial condition in the admissible class, such that the reference fully discrete solution satisfies
\begin{align*}
     \abs{(Bu_h^n)_q}\le M_*,
 q=1,\dots,Q,
 n=0,\dots,N_t.
\end{align*}
We choose a clipping level $M_0\ge M_*$, where $M_0$ is independent of the $h, \Delta t$ and initial condtion.
\end{assumption}

\begin{remark}[Reasonableness of
Assumption~\ref{assumption_reference_bounded}]
This assumption may follow from a discrete maximum principle
or a uniform $L^\infty$-stability estimate for the reference
numerical solution.
In particular, a method preserving $[0,1]$ at the quadrature
points gives $M_*=1$ for admissible initial data.
Such boundedness is not automatic for consistent-mass
Galerkin schemes under the diffusion CFL condition alone,
so we state it explicitly as an assumption.
\end{remark} 

We now define the network prediction trajectory, and the coresponding solutions.
\begin{definition}
\label{def_network_predicted_traj}
    Let $\tilde{R}_\delta$ be the scalar square network from Lemma~\ref{lemma_yarotsky_product} with $\delta$ accuracy with domain $[-M_0, M_0]$.
Define the network-implemented coefficient $u_{h, \delta}^n\in\mathbb{R}^{N}$ by
\begin{align*}
     u_{h,\delta}^{n+1}=L_hu_{h,\delta}^n+C_h{R}_\delta(Bu_{h,\delta}^n), \text{ where } R_{\delta} = \tilde{R}_{\delta}\circ \chi_{M_0}.
\end{align*}
Here $\chi_{M_0}$ is the clipping operator implemented by ReLU, $\tilde{R}_{\delta}$ is applied to $\chi_{M_0} (Bu_{h, \delta}^n)$ entry-wise, and  initial finite element encoding ($n = 0$) is the same as as the reference scheme $ u_{h,\delta}^0=u_h^0.$
We then call the associated finite element function  $v_{h,\delta}^n(x):=\sum_{j=1}^N (u_{h,\delta}^n)_j\phi_j(x)$ semi-network approximation.
Define the coefficient error $ e^n:=u_h^n-u_{h,\delta}^n.$
\end{definition}

\begin{lemma}\label{lem:recursion}
Let $u_{h, \delta}^n$, $e^n$ and network $R_\delta$ be defined as in Definition \ref{def_network_predicted_traj}.
Assume the boundedness Assumption~\ref{assumption_reference_bounded} and let $M_0\ge M_*$.  Then,
\begin{align*}
     \norm{e^{n+1}}_M
 \le
 \left(\norm{L_h}_{M\to M}+2M_0\norm{C_h}_{W\to M}\norm{B}_{M\to W}\right)\norm{e^n}_M
 +
 \norm{C_h}_{W\to M}\sqrt{\Tr(W)}\,\delta.
\end{align*}
\end{lemma}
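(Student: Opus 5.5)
We subtract the two one-step recursions. The reference update \eqref{eq:kpp-one-step-final} is $u_h^{n+1}=L_hu_h^n+C_h(Bu_h^n)^{\odot2}$, and the network update is $u_{h,\delta}^{n+1}=L_hu_{h,\delta}^n+C_hR_\delta(Bu_{h,\delta}^n)$. Their difference is
\begin{align*}
e^{n+1}=L_he^n+C_h\left[(Bu_h^n)^{\odot2}-R_\delta(Bu_{h,\delta}^n)\right].
\end{align*}
The key observation is that, by Assumption~\ref{assumption_reference_bounded} and $M_0\ge M_*$, every entry satisfies $\abs{(Bu_h^n)_q}\le M_0$. Hence $\chi_{M_0}[(Bu_h^n)_q]=(Bu_h^n)_q$, and we may rewrite $(Bu_h^n)^{\odot2}=\chi_{M_0}[Bu_h^n]^{\odot2}$ entrywise.

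Next we split the bracket by inserting $\chi_{M_0}[Bu_{h,\delta}^n]^{\odot2}$. This gives the decomposition
\begin{align*}
(Bu_h^n)^{\odot2}-R_\delta(Bu_{h,\delta}^n)
=\underbrace{\chi_{M_0}[Bu_h^n]^{\odot2}-\chi_{M_0}[Bu_{h,\delta}^n]^{\odot2}}_{=:a}
+\underbrace{\chi_{M_0}[Bu_{h,\delta}^n]^{\odot2}-\tilde R_\delta(\chi_{M_0}[Bu_{h,\delta}^n])}_{=:b}.
\end{align*}

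We bound the two terms separately.
\begin{itemize}[leftmargin=*]
\item For $a$, the clipped-square Lipschitz bound \eqref{eqn_clipped_square_lipschitztz} gives, entrywise, $\abs{a_q}\le 2M_0\abs{(Be^n)_q}$. Because the weights $\omega_q$ are positive, the $W$-norm is monotone in the absolute values of the entries. Therefore $\norm{a}_W\le 2M_0\norm{Be^n}_W\le 2M_0\norm{B}_{M\to W}\norm{e^n}_M$.
\item For $b$, each argument $\chi_{M_0}[\cdot]$ lies in $[-M_0,M_0]$. Lemma~\ref{lemma_yarotsky_product}, applied with $s=t$, then gives $\abs{b_q}\le\delta$ for every $q$. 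Hence $\norm{b}_W\le\delta\bigl(\sum_q\omega_q\bigr)^{1/2}=\delta\sqrt{\Tr(W)}$.
\end{itemize}

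Finally, we apply the triangle inequality in the $M$-norm together with the induced operator norms \eqref{eqn_operator_norms_1}:
\begin{align*}
\norm{e^{n+1}}_M\le\norm{L_h}_{M\to M}\norm{e^n}_M+\norm{C_h}_{W\to M}\left(\norm{a}_W+\norm{b}_W\right).
\end{align*}
Substituting the two bounds yields exactly the claimed estimate.

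The only delicate step is the bound on $a$. There the clipping must be placed so that both arguments of the difference are clipped. This is legitimate for the reference trajectory only because of Assumption~\ref{assumption_reference_bounded}. It is also why no a priori bound on the network iterates $Bu_{h,\delta}^n$ is needed: the Lipschitz estimate \eqref{eqn_clipped_square_lipschitztz} holds for all real arguments.
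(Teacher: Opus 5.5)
Your proof is correct and is essentially the paper's argument step for step. You use Assumption~\ref{assumption_reference_bounded} to insert the clipping into the reference update, add and subtract $\chi_{M_0}(Bu_{h,\delta}^n)^{\odot2}$, bound the two brackets in the $W$-norm via the clipped-square Lipschitz estimate and Lemma~\ref{lemma_yarotsky_product}, and finish with the triangle inequality and the induced operator norms.
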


\begin{proof}
Because $\abs{(Bu_h^n)_q}\le M_*\le M_0$, clipping does not change the reference solution values $ \chi_{M_0}(Bu_h^n)=Bu_h^n$.  Therefore the reference update \eqref{eq:kpp-one-step-final} can be written as
$u_h^{n+1}=L_hu_h^n+C_h\chi_{M_0}(Bu_h^n)^{\odot2}.$
Subtracting the network approximation to coefficients \eqref{def_network_predicted_traj} from the reference update equation above gives
$e^{n+1}
 =
 L_he^n
 +
 C_h\left(
 \chi_{M_0}(Bu_h^n)^{\odot2}-R_\delta(Bu_{h,\delta}^n)
 \right).$
For the second term of the nonlinear difference, we add and subtract, 
\begin{align*}
\tilde{e}^n &= \chi_{M_0}(Bu_h^n)^{\odot2}-R_\delta(Bu_{h,\delta}^n)\notag\\
 &=
 \left[
 \chi_{M_0}(Bu_h^n)^{\odot2}
 -
 \chi_{M_0}(Bu_{h,\delta}^n)^{\odot2}
 \right]
 +
 \left[
 \chi_{M_0}(Bu_{h,\delta}^n)^{\odot2}
 -
 R_\delta(Bu_{h,\delta}^n)
 \right].
\end{align*}
For the first bracket of the above equation for of $\tilde{e}^n$
and the $q-$th entry, we apply the Lipschitz estimate \eqref{eqn_clipped_square_lipschitztz} of the clipping,
$$
 \abs{\chi_{M_0}(Bu_h^n)_q^2-\chi_{M_0}(Bu_{h,\delta}^n)_q^2}
 \le
 2M_0\abs{(B e^n)_q},  q = 1,2,.., Q.
 $$
We then consider the entire vector's norm estimate.
This estimate is naturally written in the $W$-norm because this bracket is still a vector of nonlinear values at quadrature points, before the matrix $C_h$ maps it back into coefficient space.  
Squaring each entry, multiplying by $\omega_q$, and summing over $q$ yields
\begin{equation}\label{eqn_clipped_lipschitz_W}
 \norm{\chi_{M_0}(Bu_h^n)^{\odot2}-\chi_{M_0}(Bu_{h,\delta}^n)^{\odot2}}_W
 \le
 2M_0\norm{Be^n}_W
 \le
 2M_0\norm{B}_{M\to W}\norm{e^n}_M.
\end{equation}
For the second bracket of $\tilde{e}^n$, by Lemma~\ref{lemma_yarotsky_product}, we have
$\abs{\chi_{M_0}(Bu_{h,\delta}^n)_q^2- R_\delta((Bu_{h,\delta}^n)_q)}\le\delta.$
It follows the estimate for the entire vector in $W-$norm,
\begin{equation}\label{eq:clipped-approx-W}
 \norm{\chi_{M_0}(Bu_{h,\delta}^n)^{\odot2}-R_\delta(Bu_{h,\delta}^n)}_W
 \le
 \sqrt{\Tr(W)}\,\delta.
\end{equation}
Using \eqref{eqn_clipped_lipschitz_W} and \eqref{eq:clipped-approx-W}, we can now estimate $\tilde{e}^n$,
\begin{align*}
     \norm{\tilde{e}^n}_W
 \le
 2M_0\norm{B}_{M\to W}\norm{e^n}_M+\sqrt{\Tr(W)}\,\delta.
\end{align*}
The triangle inequality in the $M$-norm gives the estimate for $e^{n}$,
\begin{align*}
     \norm{e^{n+1}}_M
 \le
 \norm{L_he^n}_M+\norm{C_h \tilde{e}^n}_M\leq  \norm{L_h}_{M\to M}\norm{e^n}_M +  \norm{C_h}_{W\to M}\norm{\tilde{e}^n}_W. 
\end{align*}
Using the definition of the induced operator norms and Lemma \ref{lemma_numerical_operator_bounds} prove the results.
\end{proof}
The next proposition estimates the cumulative error.
\begin{proposition}[Accumulated error]\label{prop_accumulative_error}
Assume Assumption \ref{assumption_quadrature} (quadrature) and the exact-quadrature consequences \eqref{eq:kpp-exact-quadrature-consequences}, Assumption \ref{assumption_cfl} (CFL), Assumption~\ref{assumption_reference_bounded} (numerical bounded).
Let the predicted trajectory $u_{h, \delta}^n$, semi-network approximation $v_{h, \delta}^n$, error $e^n$ and network $R_{\delta}$ be defined as in the Definition \ref{def_network_predicted_traj}.
Then there exists a constant $C_T>0$ depending on $T$, independent of $\delta, \Delta t$, such that
\begin{align*}
     \norm{v_h^{N_t}-v_{h,\delta}^{N_t}}_{L^2(\Om)} = \norm{e^{N_t}}_M\le C_T\delta.
\end{align*}
\end{proposition}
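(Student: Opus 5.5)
The plan is to combine the one-step recursion of Lemma \ref{lem:recursion} with the operator bounds of Lemma \ref{lemma_numerical_operator_bounds}, and then close with a discrete Gronwall argument. First, since $u_{h,\delta}^0=u_h^0$, we have $e^0=0$. Lemma \ref{lemma_numerical_operator_bounds} applies because Assumption \ref{assumption_cfl} enforces $\dt\lambda_{\max}(M^{-1}S)\le 2$. It gives $\norm{L_h}_{M\to M}\le 1+\dt$, $\norm{C_h}_{W\to M}\le\dt$, $\norm{B}_{M\to W}=1$, and $\sqrt{\Tr(W)}=|\Om|^{1/2}$. Substituting these into Lemma \ref{lem:recursion} yields
\begin{align*}
\norm{e^{n+1}}_M \le (1+\kappa\dt)\norm{e^n}_M + \dt\,|\Om|^{1/2}\delta, \qquad \kappa:=1+2M_0 .
\end{align*}
The key point is that both the amplification factor and the forcing term carry a factor $\dt$. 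The $\dt$ in the forcing comes from $C_h=-\dt M^{-1}B^TW$.

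Next, unroll the recursion from $e^0=0$:
\begin{align*}
\norm{e^{N_t}}_M \le |\Om|^{1/2}\delta\,\dt\sum_{k=0}^{N_t-1}(1+\kappa\dt)^k = |\Om|^{1/2}\delta\,\frac{(1+\kappa\dt)^{N_t}-1}{\kappa}.
\end{align*}
Using $1+x\le e^x$ and $N_t\dt=T$, this gives $\norm{e^{N_t}}_M\le |\Om|^{1/2}\kappa^{-1}(e^{\kappa T}-1)\,\delta$. Set $C_T:=|\Om|^{1/2}(e^{(1+2M_0)T}-1)/(1+2M_0)$. This constant depends on $T$, $|\Om|$ and $M_0$. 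By Assumption \ref{assumption_reference_bounded}, $M_0$ is independent of $h$, $\dt$, $\delta$ and the initial data, so $C_T$ is independent of $\delta$ and $\dt$ as well.

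Finally, the equality $\norm{v_h^{N_t}-v_{h,\delta}^{N_t}}_{L^2(\Om)}=\norm{e^{N_t}}_M$ follows from linearity of $\calR_h$ and the identity \eqref{eq:M-L2}. Indeed, $v_h^{N_t}-v_{h,\delta}^{N_t}=\calR_h e^{N_t}$.

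The only delicate point is the one already handled in Lemma \ref{lem:recursion}: the network iterates may leave $[-M_0,M_0]$. Clipping makes the square globally $2M_0$-Lipschitz, so the growth rate $\kappa$ does not depend on the network trajectory. No a priori bound on $u_{h,\delta}^n$ is needed, and the argument is not circular. Beyond that, one should check that the per-step error $\dt\,\delta$, and not just $\delta$, appears; otherwise the sum over $N_t\sim h^{-2}$ steps would blow up. This check is ensured by the bound $\norm{C_h}_{W\to M}\le\dt$.
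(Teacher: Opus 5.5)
Your proposal is correct and follows the paper's proof step for step: you insert the bounds of Lemma~\ref{lemma_numerical_operator_bounds} into Lemma~\ref{lem:recursion}, unroll the recursion from $e^0=0$ as a geometric sum, and close with $(1+\kappa\dt)^{N_t}\le e^{\kappa T}$. The only difference is that you name the paper's generic constants explicitly, $c_0=1+2M_0$ and $c_1=|\Om|^{1/2}$, which shows $c_0>0$ and makes the paper's separate $c_0=0$ case unnecessary.
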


\begin{proof}
By Lemma~\ref{lemma_numerical_operator_bounds} and \eqref{eq:kpp-exact-quadrature-consequences},
$ \norm{L_h}_{M\to M}\le1+\dt,
 \norm{C_h}_{W\to M}\le\dt,
 \norm{B}_{M\to W}=1,
 \sqrt{\Tr(W)}=|\Omega|^{1/2}.$
Therefore Lemma~\ref{lem:recursion} gives constants $c_0,c_1>0$, independent of $h$, $\dt$, and $\delta$, such that
\begin{equation}\label{eq:discrete-recursion}
 \norm{e^{n+1}}_M\le(1+c_0\dt)\norm{e^n}_M+c_1\dt\delta.
\end{equation}
Let $E_n:=\norm{e^n}_M$.  
Since $E_0=0$, repeated use of \eqref{eq:discrete-recursion} gives
\begin{align*}
    E_{N_t}
 \le
 c_1\dt\delta\sum_{\ell=0}^{N_t-1}(1+c_0\dt)^\ell =  c_1\dt\delta \frac{(1+c_0\dt)^{N_t}-1}{c_0\dt}, \text{ when } c_0>0.
\end{align*}
$N_t\dt=T$ and the elementary inequality $1+x\le e^x, x\geq 0$ gives 
$(1+c_0\dt)^{N_t}\le (e^{c_0\dt})^{N_t}=e^{c_0N_t\dt}=e^{c_0T}.$
Thus $ E_{N_t}
 \le
 \frac{c_1}{c_0}\left(e^{c_0T}-1\right)\delta
 =:C_T\delta.$
If $c_0=0$, the same conclusion follows from $E_{N_t}\le C_0T\delta$.  Finally, the mass norm and $L_2$ equivalence gives the result.
\end{proof}

\subsection{Numerical error}\label{sec:classical-error}
Let $v(T;v_0)$ denote the exact Fisher--KPP solution at terminal time $T$, and let $v_h^{N_t}(v_0)$ be the fully discrete finite element function generated by \eqref{eq:kpp-one-step-final} from the chosen finite element encoding of $v_0$.  
For the numerical error estimate below, we assume that the
chosen initial finite element encoding satisfies
$\|v_0-v_h^0(v_0)\|_{L^2(\Omega)}
\le C_{\rm init}h^m$,
uniformly over the admissible initial data, where
$C_{\rm init}$ is independent of $h$ and $v_0$.
The contribution of this initial approximation error is
included in the spatial error term below.
Under standard regularity assumptions on the solution, stability of the explicit scheme, and Lipschitz control of the reaction on the relevant bounded range, semilinear parabolic finite element theory gives an estimate of the form \cite{thomee2007galerkin},
\begin{align}
     \norm{v(T;v_0)-v_h^{N_t}(v_0)}_{L^2(\Om)}
 \leq
 C_{\rm num}\bigl(h^m+\dt+\eta_Q\bigr).
 \label{eqn_numerical_error}
\end{align}
Here $m$ is the spatial order in the $L^2(\Omega)$ norm; for $P_r$ elements and sufficiently smooth solutions one typically has $m=r+1$.  The term $\eta_Q$ denotes the consistency error due to quadrature.  In the Fisher--KPP square case, if the elementwise quadrature rule is exact through degree $3r$ on affine elements, then $\eta_Q=0$.

Combining \eqref{eqn_numerical_error} with accumulative error estimate Proposition~\ref{prop_accumulative_error} gives
\begin{align*}
     \norm{v(T;v_0)-v_{h,\delta}^{N_t}(v_0)}_{L^2(\Om)}
 \le
 C\bigl(h^m+\dt+\eta_Q+\delta\bigr).
\end{align*}
If the square quadrature is exact, then $\eta_Q=0$.
Besides, by the CFL $\dt\lesssim h^2$, this becomes
\begin{align*}
     \norm{v(T;v_0)-v_{h,\delta}^{N_t}(v_0)}_{L^2(\Om)}
 \le
 C\bigl(h^m+h^2+\delta\bigr).
\end{align*}

\subsection{Semi-network approximation error}
\label{sec_main_theorem_semi}

We now study the semi-network approximation as in Definition \ref{def_network_predicted_traj} where the coefficient is apprpximated by the iterative network trajectory $u_{h, \delta}^n$  while the finite element basis functions can be realized exactly by ReLU networks.  
This is the case for continuous $P_1$ Lagrange finite elements on affine simplicial meshes, since the corresponding basis functions are continuous piecewise affine functions. 
In this subsection, no additional basis-network approximation error is introduced.
We need the following additional assumptions.

\begin{assumption}
\label{assumption_fem_p1}
Let $\calA$ be a class of admissible initial data.  We assume the following.
\begin{enumerate}
\item $P_1$ finite element setting.
The spaces $V_h\subset H_0^1(\Omega)$ are continuous $P_1$ Lagrange finite element spaces on affine, shape-regular, quasi-uniform  and locally convex simplicial meshes.  
Thus each basis function $\phi_j$ is continuous and piecewise affine, and the number of degrees of freedom satisfies $N\sim h^{-d}$.

\item Initial-data encoding.
There is a fixed finite element encoder $\Enc:\calA\to\R^N$, for example nodal interpolation or $L^2$-projection, and both the numerical scheme and the network scheme start from $ u_h^0=u_{h,\delta}^0=\Enc(v_0).$
\end{enumerate}
\end{assumption}

\begin{remark}
For continuous $P_1$ elements on affine simplicial meshes, the finite element basis functions are continuous piecewise affine functions and hence can be represented exactly by ReLU networks.  
The result is established in \cite{he2020relu} and is summarized as the lemma below.
\begin{lemma}
\label{lemma_jinchao_xu}
On a locally convex simplicial mesh, each continuous $P_1$
nodal basis function admits an exact ReLU representation
with depth at most $C(1+\log_2 k_h)$ and width at most
$Ck_h$, where $C$ is independent of $h$.
\end{lemma}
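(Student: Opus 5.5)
The plan is the construction of He--Li--Xu--Zheng: write each nodal hat function as a minimum of local affine functions, then realize that minimum with a binary-tree ReLU network. Fix a vertex $x_j$ with nodal basis function $\phi_j$. Let $K_1,\dots,K_{m_j}$ be the elements containing $x_j$, so $m_j\le k_h$. On each $K_\ell$ the function $\phi_j$ coincides with an affine function $a_\ell$ satisfying $a_\ell(x_j)=1$ and $a_\ell=0$ on the face of $K_\ell$ opposite $x_j$. Let $\omega_j:=\bigcup_\ell K_\ell$ be the vertex patch; local convexity means this patch is convex.

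\textbf{Step 1: a min-formula on the patch.} I will show that
\[
\phi_j(x)=\max\Bigl\{0,\ \min_{1\le \ell\le m_j} a_\ell(x)\Bigr\}\qquad \text{for all } x\in\Omega .
\]
\begin{itemize}
\item Inside $\omega_j$, take $x\in K_\ell$. For any other $\ell'$, the function $a_{\ell'}$ is affine and agrees with $a_\ell$ on the shared face when $K_\ell$ and $K_{\ell'}$ are neighbors. Convexity of the patch then gives that the piecewise-affine function $\phi_j|_{\omega_j}$ is concave. Hence $a_{\ell'}\ge a_\ell=\phi_j$ on $K_\ell$, since an affine extension of one piece of a concave function lies above the function.
\item Outside $\omega_j$, I need $\min_\ell a_\ell\le 0$. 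For $x\notin\omega_j$, the segment from $x_j$ to $x$ leaves the convex set $\omega_j$ through some boundary face, which is the face opposite $x_j$ of some $K_\ell$. Along the ray, $a_\ell$ equals $1$ at $x_j$ and $0$ on that face, so $a_\ell$ is negative beyond it.
\end{itemize}
Together these give the identity.

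\textbf{Step 2: ReLU realization.} I will use $\min(s,t)=\tfrac12(s+t)-\tfrac12\bigl(\relu(s-t)+\relu(t-s)\bigr)$, which is an exact width-constant, depth-$2$ ReLU block. Pad to $2^{\lceil\log_2 m_j\rceil}$ inputs by repeating some $a_\ell$, then take pairwise minima in a balanced binary tree.
\begin{itemize}
\item The first affine layer computes all $a_\ell(x)$, width $m_j\le k_h$.
\item Each tree level halves the number of entries and costs $O(1)$ depth, with width at most a constant times $k_h$.
\item A final $\relu$ gives $\max\{0,\cdot\}$.
\end{itemize}
The total depth is $C(1+\log_2 k_h)$ and the width is $Ck_h$, where $C$ is absolute and hence independent of $h$.

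\textbf{Main obstacle.} The delicate point is Step 1: I must justify the concavity of $\phi_j$ on $\omega_j$ and the sign of $\min_\ell a_\ell$ outside the patch. This is exactly where local convexity is needed; without it the min-formula can fail, as the He et al.\ counterexamples show. I would argue concavity through the standard fact that a continuous piecewise-affine function on a convex domain is concave if it is concave across every interior face. Across a face shared by $K_\ell$ and $K_{\ell'}$, both affine pieces vanish on the opposite faces and equal $1$ at $x_j$, so the kink along the shared face is a ridge and not a valley. Boundary vertices, where $\phi_j$ would be nonzero on $\partial\Omega$, do not arise because $V_h\subset H_0^1$ uses interior vertices only.
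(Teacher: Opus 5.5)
Your construction is the one the paper relies on; the paper gives no proof and cites He--Li--Xu--Zheng. It writes $\phi_j=\max\{0,\min_\ell a_\ell\}$ and realizes this with a balanced binary tree of exact ReLU min-blocks. Your Step~2, the complexity count, and your argument outside the patch are fine.

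The gap is the inside-the-patch inequality $a_{\ell'}\ge a_\ell$ on $K_\ell$. You justify the ``ridge'' across a shared face by saying both pieces equal $1$ at $x_j$ and vanish on their opposite faces. That holds for every vertex patch, convex or not, so it cannot imply concavity, and for nonconvex patches concavity is false. For example, in $d=2$ let $K_\ell=\operatorname{conv}\{x_j,v,p\}$ and $K_{\ell'}=\operatorname{conv}\{x_j,v,p'\}$ share the edge $x_jv$. Then $a_{\ell'}(p)\ge 0=a_\ell(p)$ holds exactly when $p$ lies on the same side of the line $vp'$ as $x_j$, i.e.\ when the patch angle at $v$ is at most $\pi$. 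With $x_j=(0,0)$, $v=(1,0)$, $p=(2,1)$, $p'=(2,-1)$ one gets $a_{\ell'}(x,y)=1-x-y$ and $a_{\ell'}(p)=-2<0$, which is a valley. So convexity must enter precisely at this step, and your write-up asserts that it does without actually using it.

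The missing idea is a supporting-hyperplane argument.
\begin{itemize}
\item Let $G_{\ell'}$ be the face of $K_{\ell'}$ opposite $x_j$. The element across $G_{\ell'}$ (if any) has its remaining vertex strictly on the far side of $\operatorname{aff}G_{\ell'}$, so it does not contain $x_j$. Hence $G_{\ell'}\subset\partial\omega_j$.
\item A supporting hyperplane of the convex set $\omega_j$ at a relative-interior point of $G_{\ell'}$ must contain $G_{\ell'}$, so it is $\{a_{\ell'}=0\}$. Since $a_{\ell'}(x_j)=1$, this gives $a_{\ell'}\ge 0$ on all of $\omega_j$.
\item Now take $x\in K_\ell$ with barycentric representation $x=\lambda_0x_j+\sum_i\lambda_iw_i$ over the vertices of $K_\ell$. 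Since every $w_i\in\omega_j$,
\[
a_{\ell'}(x)=\lambda_0+\sum_i\lambda_i\,a_{\ell'}(w_i)\ \ge\ \lambda_0=a_\ell(x)=\phi_j(x).
\]
\end{itemize}
This gives $\min_{\ell'}a_{\ell'}=\phi_j$ on $\omega_j$ directly, with no need for the local-to-global concavity lemma. Applied to the far vertex $p$, it is also exactly the ridge condition your concavity route needed. With this replacement, the rest of your proof goes through.
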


This exact-realization argument does not directly apply to $Q_1$ elements in dimension $d\ge2$, because $Q_1$ basis functions are generally bilinear or multilinear on tensor-product elements, not piecewise affine.  
Thus $Q_1$ bases would require an additional approximation argument unless $d=1$, where $Q_1=P_1$.
This will be discussed in the next sections.
\end{remark}

Next, we establish the semi-network approximation theorem for exact FEM basis.
\begin{theorem}
\label{thm_semi_approximation}
Suppose Assumption~\ref{assumption_cfl} (CFL), \ref{assumption_quadrature} (exact quadrature), \ref{assumption_reference_bounded} (numerical bounded) and \ref{assumption_fem_p1} (FEM settings) hold.  
Assume also that the classical fully discrete $P_1$ finite element solution satisfies $ \norm{v(T;v_0)-v_h^{N_t}(v_0)}_{L^2(\Om)}
 \le
 C_{\rm num}\bigl(h^m+\dt\bigr)$
uniformly for $v_0\in\calA$.  
Let $0<\varepsilon<1/2$ be arbitrary, and $0<\delta<M_0^2$ be estimated later.
Then there exist: $h>0$, $\dt>0$, $N>0$, and one ReLU finite element coefficient network
$\NN_{h,\varepsilon}:\R^N\to\R^N$, 
and basis networks $\{\phi_{h, j}\}_{j = 1}^{N}:\mathbb{R}^d \rightarrow \mathbb{R}$
and the neural operator
\begin{equation}\label{eqn_semi_operator_structure}
 \calG_{h,\varepsilon}(v_0)(x)
 :=
 \sum_{j=1}^N [\NN_{h,\varepsilon}(\Enc(v_0))]_j\phi_{h, j}(x)
\end{equation}
such that $\textcolor{black}{\norm{v(T;v_0)-\calG_{h,\varepsilon}(v_0)}_{L^2(\Om)}
 \le
 C\bigl(h^m+\dt+\delta\bigr)
 }$, for any admissible initial condition $v_0$.
Here $C$ is independent of $h$, $\dt$, $\delta$, and $v_0\in\calA$. 
\medskip 

With additional $\dt\sim h^2$ from the CFL assumptions, and set $ \eta:=\min\{m,2\}.$
One can choose
$ h\sim\varepsilon^{1/\eta},
 \dt\sim h^2,
 \delta\sim\varepsilon$.
 Then $N \sim \varepsilon^{-d/\eta}$, and $N_t \sim h^{-2}$. 
 It follows $\norm{v(T;v_0)-\calG_{h,\varepsilon}(v_0)}_{L^2(\Om)}
 \lesssim \varepsilon$. 
Coefficient network $\mathcal{N}_{h, \varepsilon}$ depth and width can be chosen to satisfy
\begin{equation}\label{eq:depth-width-final}
 \operatorname{depth}(\mathcal{N}_{h, \varepsilon})
 \lesssim
 \varepsilon^{-2/\eta}(1+ \log(1/\varepsilon)),
 \operatorname{width}(\mathcal{N}_{h, \varepsilon})
 \lesssim
 \varepsilon^{-d/\eta}.
\end{equation}
\textcolor{black}{Each basis network $\phi_{h, j}$ can be chosen to have constant depth with width independent of $h, \varepsilon$.}
\end{theorem}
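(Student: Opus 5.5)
The proof splits the error into a numerical part and a network part and then counts the size of the network that realizes the unrolled scheme. For the neural operator \eqref{eqn_semi_operator_structure} I take $\NN_{h,\varepsilon}$ to be the $N_t$-fold composition of the ReLU block $u\mapsto L_hu+C_hR_\delta(Bu)$ from Definition~\ref{def_network_predicted_traj}, so that $\NN_{h,\varepsilon}(\Enc(v_0))=u_{h,\delta}^{N_t}$. For $\phi_{h,j}$ I take the exact ReLU realizations of the $P_1$ nodal basis functions given by Lemma~\ref{lemma_jinchao_xu}, so that $\calG_{h,\varepsilon}(v_0)=v_{h,\delta}^{N_t}$ exactly. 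The triangle inequality then gives
\[
\norm{v(T;v_0)-\calG_{h,\varepsilon}(v_0)}_{L^2(\Om)}\le \norm{v(T;v_0)-v_h^{N_t}}_{L^2(\Om)}+\norm{v_h^{N_t}-v_{h,\delta}^{N_t}}_{L^2(\Om)}.
\]
The first term is at most $C_{\rm num}(h^m+\dt)$ by the assumed numerical estimate. The second is at most $C_T\delta$ by Proposition~\ref{prop_accumulative_error}, whose hypotheses (CFL, exact quadrature, reference boundedness, common initial encoding $u_h^0=u_{h,\delta}^0=\Enc(v_0)$) are exactly the assumptions of the theorem. Both constants are uniform in $v_0\in\calA$, since $M_0$, $M_*$ and $C_{\rm num}$ are. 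This proves the first claim.

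For the rates, I substitute $h\sim\varepsilon^{1/\eta}$, $\dt\sim h^2$ and $\delta\sim\varepsilon$. Then $h^m\le h^\eta\sim\varepsilon$ and $\dt\sim h^2\le h^\eta\sim\varepsilon$ (using $h\le 1$), so the total error is $\lesssim\varepsilon$. The counts follow from Assumption~\ref{assumption_fem_p1}, which gives $N\sim h^{-d}\sim\varepsilon^{-d/\eta}$, and from Assumption~\ref{assumption_cfl}, which gives $N_t\sim h^{-2}\sim\varepsilon^{-2/\eta}$. One must also check that $\delta<M_0^2$, which holds because $\varepsilon<1/2$ and the constant in $\delta\sim\varepsilon$ can be chosen small.

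For the architecture count, one block consists of the following layers:
\begin{itemize}
\item an affine layer $u\mapsto Bu\in\R^Q$;
\item $Q$ parallel copies of the clipping network $\chi_{M_0}$, each of constant size;
\item $Q$ parallel copies of the square network $\tilde R_\delta$, each of constant width and depth $\le c_0+c_1\log(M_0^2/\delta)\lesssim 1+\log(1/\varepsilon)$ by Lemma~\ref{lemma_yarotsky_product};
\item a final affine combination $L_hu+C_hz$.
\end{itemize}
The residual branch $L_hu$ has to be carried alongside the nonlinear branch. I do this with the identity trick $u=\relu(u)-\relu(-u)$, applied at each of the $\sim\log(1/\varepsilon)$ inner layers, which costs width $2N$. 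The block width is therefore $\lesssim Q+N\sim h^{-d}\sim\varepsilon^{-d/\eta}$, using $Q=n_q\#\T\sim h^{-d}$. Its depth is $\lesssim 1+\log(1/\varepsilon)$. Composing $N_t$ blocks, and merging adjacent affine maps, gives depth $\lesssim\varepsilon^{-2/\eta}(1+\log(1/\varepsilon))$ while the width is unchanged, which is \eqref{eq:depth-width-final}.

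For the basis networks, quasi-uniformity and shape regularity bound the number of elements sharing a vertex, so $k_h\le C$ independently of $h$. Lemma~\ref{lemma_jinchao_xu} then gives constant depth and constant width.

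I expect the main obstacle to be the bookkeeping of the residual identity path through the deep square subnetworks, and the requirement that every square network have the same depth so that parallel concatenation is legitimate (pad with identity layers). Everything else follows directly from the previously established results.
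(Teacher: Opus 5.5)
Your proposal is correct and follows the paper's proof essentially step for step. You unroll the block $u\mapsto L_hu+C_hR_\delta(Bu)$ $N_t$ times, use the exact $P_1$ basis networks from Lemma~\ref{lemma_jinchao_xu}, split the error by the triangle inequality into the assumed numerical bound plus Proposition~\ref{prop_accumulative_error}, and count depth as $N_t$ times $\log(1/\delta)$ and width as $N+Q$. Your additional bookkeeping only fills in details the paper passes over quickly and does not change the argument: the residual channel carried via $u=\relu(u)-\relu(-u)$, the check $\delta<M_0^2$, and the inequalities $h^m,h^2\le h^\eta$.
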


\begin{proof}
We have the numerical error  estimate $\norm{v(T;v_0)-v_h^{N_t}(v_0)}_{L^2}$, it follows to construct neural networks to approximate  $v_{h}^{N_t}(v_0)$ uniformly holds for all $v_0$.
This can be done by using networks to approximate the coefficients $u_{h}^{N_t}$ and construct basis $\phi_j$ separately.
\medskip

We first construct the coefficient network $\NN_{h,\varepsilon}$.  
$\NN_{h,\varepsilon}$ has an iterative structure while the each layer can be constructed by using the Definition \ref{def_network_predicted_traj}.
Specifically, given each time-step block's input $u\in\mathbb{R}^{N}$,
we have the block output as $u\longmapsto L_hu+C_hR_\delta(Bu).$
$L_h, C_h$ and $B$ are derived depending on the numerical scheme and can be exactly representable by affine layers in a ReLU network. 
The only approximation inside the coefficient network is the scalar clipped-square approximation $R_{\delta}$.  
Stacking this block $N_t$ times gives a ReLU coefficient network $\NN_{h,\varepsilon}:\R^N\to\R^N$ whose output is the coefficient vector $u_{h,\delta}^{N_t}$.

Now for the basis $\phi_j$.
By Lemma \ref{lemma_jinchao_xu},  $P_1$ basis functions $\phi_j$ can be exactly represented by ReLU networks denoted as $\phi_{h, j}$.
Hence there is no approximation error of the network realization of the basis.
The neural operator in \eqref{eqn_semi_operator_structure} is exactly the finite element reconstruction of the network-generated coefficient vector $ \calG_{h,\varepsilon}(v_0)(x)
 =
 \sum_{j=1}^N (u_{h,\delta}^{N_t})_j\phi_j(x)
 =
 v_{h,\delta}^{N_t}(x;v_0).$
 \medskip
 
The error estimate follows from the triangle inequality:
\begin{align*}
     \norm{v(T;v_0)-v_{h,\delta}^{N_t}(v_0)}_{L^2}
 \le
 \norm{v(T;v_0)-v_h^{N_t}(v_0)}_{L^2}
 +
 \norm{v_h^{N_t}(v_0)-v_{h,\delta}^{N_t}(v_0)}_{L^2}.
\end{align*}
The first term is bounded by the classical finite element estimate $ \norm{v(T;v_0)-v_h^{N_t}(v_0)}_{L^2}
 \le
 C_{\rm num}(h^m+\dt).$
The second term is bounded by Proposition~\ref{prop_accumulative_error}, specifically, we have  $ \norm{v_h^{N_t}(v_0)-v_{h,\delta}^{N_t}(v_0)}_{L^2}
 \le
 C_T\delta$.
Combining these two bounds gives the estimate.
Now for any $\varepsilon>0$, by the  CFL assumption $\dt\sim h^2$, set $\eta = \min\{m,2\}$, $\delta \sim \varepsilon$ and choose 
$h\sim \varepsilon^{1/\eta}$, 
gives $\sup_{v_0\in\calA}
 \norm{v(T;v_0)-\calG_{h, \varepsilon}(v_0)}_{L^2(\Om)}
 \le
 C\varepsilon.$
 \medskip 
 
Now let us estimate the depth and width of the coefficient network.
Since the structure is iterative, we consider one layer.
Let us first consider $R_{\delta} = \tilde{R}_{\delta}\circ \chi_{M_0}$. 
By Lemma~\ref{lemma_yarotsky_product}, the $\tilde{R}_{\delta}$ has depth
$D_{\rm sq}(\delta)\le c_0+c_1\log(M_0^2/\delta)$ and constant width.
The clipping network is implemented by ReLU and has constant depth and width, and the affine maps $L_h\in\mathbb{R}^{N\times N}$, $C_h\in\mathbb{R}^{N\times Q}$ and $B\in \mathbb{R}^{Q\times N}$ in each time-step block add only a constant number of layers.  
Therefore one time-step block has depth dominated by $\tilde{R}_{\delta}$, i.e. $D_{\rm block}(\delta)\le \widetilde c_0+\widetilde c_1\log(M_0^2/\delta)$
with $\tilde{c}_0, \tilde{c}_1$ are constants.
\medskip

Since $N_t=T/\dt\sim h^{-2}$, the coefficient network has depth $ \operatorname{depth}(\NN_{h,\varepsilon})
 \lesssim
 h^{-2}(1+\log(M_0^2/\delta)).$
With $h\sim\varepsilon^{1/\eta}$ and $\delta\sim\varepsilon$, this gives
\begin{align*}
     \operatorname{depth}(\NN_{h, \varepsilon})
 \lesssim
 \varepsilon^{-2/\eta}(1 + \log(1/\varepsilon)).
\end{align*}

For width, each block stores vectors of size comparable to $N$ and $Q$, and applies $Q$ scalar square subnets in parallel.  
Since the scalar square subnet has bounded width up to $N, Q \sim h^{-d}$ ($Q \sim N\times n_q$, where $n_q$ is the number of quadrature points which depends on fixed constants $r$ and $d$), we have
$ \operatorname{width}(\NN_{h,\varepsilon})\lesssim N+Q\lesssim h^{-d}.$
Using $h\sim\varepsilon^{1/\eta}$ gives
\begin{align*}
     \operatorname{width}(\NN_{h, \varepsilon})\lesssim\varepsilon^{-d/\eta}.
\end{align*}
This proves \eqref{eq:depth-width-final}.
For the basis network $\phi_{h, j}$, by Lemma \ref{lemma_jinchao_xu}, depth $\sim \log_2 k_h$ and width $\sim k_h$. 
Due to the uniform shape regular mesh assumption of $\mathcal{T}_h$, $k_h$ is of an order constant, which completes the proof.
\end{proof}

\begin{remark}[Parameter count versus width]
The width estimate in \eqref{eq:depth-width-final} counts the maximum hidden-layer width of the coefficient network.  It is not a bound on the number of affine parameters.  Dense matrices such as $M^{-1}B^TW$ may contain many entries, unless one further exploits sparsity, locality, mass lumping, iterative solvers, or structured implementations.   
\end{remark}

\subsection{Full approximation with nonlocal approximation}
\label{sec:main-theorem}
The coefficient network constructed above approximates the terminal finite
element coefficient vector. 
However the basis is constructed exactly using the ReLU network of constant size.
This is true only when linear $P_1$ FEM is used.
In general, the basis could be constructed by the network, leading to another approximation. 
In this section, we will study this general case.
To obtain an operator architecture in which the
spatial basis functions are also represented by neural networks, we retain
$\phi_j$ for the exact finite element basis function and introduce a ReLU
network
\begin{align*}
     \Phi_{j,h,\gamma}:\mathbb R^d\to\mathbb R,
\end{align*}
that approximates $\phi_j$.  Here $\gamma>0$ denotes the
accuracy used for each individual basis network.  It is distinct from the
square-network tolerance $\delta$ and the final target accuracy
$\varepsilon$.

\medskip
We additionally assume the following.
\begin{assumption}[Finite element setting and initial-data encoding]
\label{assumption_fem_mesh_nonlocal}
Let $\mathcal A$ be a class of admissible initial data. 
The spaces
$V_h=\operatorname{span}\{\phi_1,\ldots,\phi_N\}
\subset H_0^1(\Omega)$
are continuous Lagrange $P_r$ finite element spaces of fixed degree
$r\geq1$ on a conforming, shape-regular, quasi-uniform family
$\mathcal T_h$ of affine simplicial meshes of
$\Omega=[0,1]^d$. 
The dimension $d$ and degree $r$ are fixed
independently of $h$.
Additionally, we assume there is a fixed finite element encoder
$\mathcal{E}_h:\mathcal A\to\mathbb R^N$
such that, for every $v_0\in\mathcal A$, the numerical and network
schemes use the same initial coefficient vector $u_h^0=u_{h,\delta}^0 = \mathcal{E}_h(v_0).$
\end{assumption}

\textcolor{black}{The assumption implies the following standard FEM results, see for example \cite{ern2004theory}.}
\begin{lemma}
\label{lemma_fem_properties}
Under Assumption~\ref{assumption_fem_mesh_nonlocal}, there exist constants
$c_M,C_M,c_D,C_D,C_{\mathrm{loc}}>0$, independent of $h$, $j$ such that
\begin{align*}
    N\sim h^{-d},
\quad
c_Mh^d\|a\|_2^2
\le a^TMa
\le C_Mh^d\|a\|_2^2,
\quad\text{for all }a\in\mathbb R^N.
\end{align*}

For each nodal Lagrange basis function $\phi_j$, there exists a closed axis-aligned box
\begin{align*}
    \omega_{j,h}:=\operatorname{int}(\operatorname{supp}\phi_j)
\subset D_{j,h}
=
\prod_{i=1}^d[a_{j,i},b_{j,i}]
\end{align*}
such that $c_Dh\le b_{j,i}-a_{j,i}\le C_Dh, i=1, ... ,d,$
and $\sup_{x\in\mathbb R^d}
|\{j:x\in D_{j,h}\}|
\le C_{\mathrm{loc}}.$
\end{lemma}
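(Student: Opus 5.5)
The plan is to prove the three claims of Lemma~\ref{lemma_fem_properties} separately. Each follows from shape regularity and quasi-uniformity of $\mathcal T_h$ together with the fact that $d$ and $r$ are fixed.

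\textbf{The count $N\sim h^{-d}$.} Quasi-uniformity gives $h_K\ge c_{\rm qu}h$, and shape regularity gives $\rho_K\ge h_K/C_{\rm shape}$. Hence every element contains a ball of radius $\gtrsim h$, so $|K|\sim h^d$. Since $\sum_K|K|=|\Omega|=1$, we get $\#\mathcal T_h\sim h^{-d}$. Each element carries a fixed number $\binom{r+d}{d}$ of Lagrange nodes, and each node is shared by at most $k_h$ elements. It therefore suffices to bound $k_h$ by a constant. I will do this by a volume argument: all elements sharing a node lie in a ball of radius $h$ around it, and each has volume $\gtrsim h^d$. Removing boundary nodes changes counts only by constants, so $N\sim h^{-d}$.

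\textbf{Mass-matrix equivalence.} I will use the standard reference-element scaling argument. Write $a^TMa=\sum_K\int_K v_h^2$ with $v_h=\sum_j a_j\phi_j$. On $K$, pull back via the affine map $F_K$, whose Jacobian determinant is $\sim h^d$. On $\widehat K$, $\int_{\widehat K}\hat v^2$ and $\sum_{\ell}\hat a_\ell^2$ (the local nodal values) are two norms on the finite-dimensional space $\mathbb P_r(\widehat K)$, so they are equivalent with constants depending only on $r,d$. This gives $\int_K v_h^2\sim h^d\sum_{j\in K}a_j^2$. Summing over $K$ and using that each index appears in between $1$ and $k_h\le C$ elements yields $c_Mh^d\|a\|_2^2\le a^TMa\le C_Mh^d\|a\|_2^2$. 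Dirichlet nodes carry no coefficients, so they cause no trouble.

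\textbf{Boxes $D_{j,h}$ and bounded overlap.} The support of $\phi_j$ is the union of the elements containing node $x_j$, or, for edge and face nodes, the elements containing that subentity; in every case this is at most $k_h$ elements. The support therefore lies in the ball $B(x_j,h)$. Let $D_{j,h}$ be the axis-aligned cube of side $2h$ centered at $x_j$. This gives $c_D=C_D=2$ and $\omega_{j,h}\subset D_{j,h}$. For the overlap, $x\in D_{j,h}$ implies $|x-x_j|_\infty\le h$, so it suffices to bound $\#\{j:x_j\in B_\infty(x,h)\}$. Each such node belongs to an element of volume $\gtrsim h^d$ contained in $B_\infty(x,2h)$. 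Each element contains a bounded number of nodes, so a volume count gives at most $C(2h)^d/(ch^d)\cdot\binom{r+d}{d}=C_{\rm loc}$ nodes.

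\textbf{Main obstacle.} The only delicate point is making the constants genuinely independent of $h$ and $j$. This rests on the uniform bound for $k_h$ and on the uniform bounds for the Jacobians of $F_K$ and their inverses ($\|DF_K\|\lesssim h$, $\|DF_K^{-1}\|\lesssim\rho_K^{-1}\lesssim h^{-1}$). I will establish these first as a preliminary step; everything else then reduces to finite-dimensional norm equivalence on $\widehat K$ and volume counting.
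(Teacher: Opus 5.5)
Your proof is correct and is exactly the standard reference-element scaling and volume-counting argument. The paper does not write this out; it simply cites Ern--Guermond for these facts, so your route is the same one.

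The one step worth tightening is ``removing boundary nodes changes counts only by constants.'' Justify it as follows. Nodes on $\partial\Omega$ lie in elements within distance $h$ of $\partial\Omega$, and by the same volume count there are only $O(h^{1-d})$ such elements. Hence $N\gtrsim h^{-d}$ holds for all sufficiently small $h$. For coarse meshes the bound can genuinely fail, since $N$ can even be $0$.
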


The next lemma is to approximate the FEM basis.
Related neural network constructions for high-order finite elements include
the univariate ReLU approximations in
\cite{opschoor2020deep,opschoor2024deep},
the multidimensional tensor-product constructions in
\cite{marcati2023exponential}, and exact representations on simplicial
meshes using ReLU and squared-ReLU activations
\cite{he2026deep}.
Here we give a direct argument based on the localized support and
first-derivative scaling of the basis functions, without requiring
an explicit polynomial representation.

\begin{lemma}
\label{lemma_basis_network_nonlocal}
For finite element basis function $\phi_j$ satisfying Assumption \ref{assumption_fem_mesh_nonlocal} and every
$0<\gamma<1$, there exists a ReLU network
\begin{align*}
     \Phi_{j,h,\gamma}:\mathbb R^d\to\mathbb R,
\end{align*}
such that $ \norm{\phi_j-\Phi_{j,h,\gamma}}_{L^\infty(\Om)}
 \leq
 \gamma.$
Moreover,
\begin{align*}
 \operatorname{depth}(\Phi_{j,h,\gamma})
 &\lesssim
 C_{\rm FE}\bigl(1+\log(1/\gamma)\bigr),\\
 \operatorname{width}(\Phi_{j,h,\gamma})
 &\lesssim \gamma^{-d},\\
 \operatorname{size}(\Phi_{j,h,\gamma})
 &\lesssim
 C_{\rm FE}\gamma^{-d} \bigl(1+\log(1/\gamma)\bigr).
\end{align*}
The constant $C_{\rm FE}$ may depend on $d$, $r$, and the
shape-regularity constant, but is independent of $h$, $j$, and
$\gamma$.  The values of some affine weights may depend on
$h$.  No support-preservation property is asserted or required.
\end{lemma}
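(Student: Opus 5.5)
The plan is to rescale $\phi_j$ to a fixed-size reference box, approximate the rescaled function with a scale-free network built from Lemma~\ref{lemma_yarotsky}, and undo the rescaling with affine layers. Fix $j$ and let $D_{j,h}=\prod_i[a_{j,i},b_{j,i}]$ be the box from Lemma~\ref{lemma_fem_properties}. Let $T_j:[0,1]^d\to D_{j,h}$ be the affine map $T_j(y)_i=a_{j,i}+(b_{j,i}-a_{j,i})y_i$, and set $\psi_j:=\phi_j\circ T_j$ on $[0,1]^d$. Only the network's accuracy on $\Omega$ matters, so we must control two things: $\psi_j$ on $[0,1]^d$, and the behavior of the network outside $D_{j,h}$, where $\phi_j=0$. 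Only the second affects the input layer $T_j^{-1}$, whose weights are of size $h^{-1}$; this is why some affine weights depend on $h$.

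The key step is a $W^{1,\infty}$ bound on $\psi_j$ that is uniform in $h$ and $j$. By shape regularity and quasi-uniformity, standard inverse/scaling estimates give $\|\phi_j\|_{L^\infty}\le C$ and $\|\nabla\phi_j\|_{L^\infty(K)}\le C h^{-1}$ on every element $K$. Since $\phi_j$ is continuous and piecewise polynomial, it is Lipschitz on $D_{j,h}$, after extending it by zero outside its support, with constant $Ch^{-1}$. Because the side lengths of $D_{j,h}$ are at most $C_Dh$, this gives $\|\psi_j\|_{W^{1,\infty}([0,1]^d)}\le C_{\rm FE}$. Here $W^{1,\infty}$ is understood as Lipschitz, which is legitimate because $\psi_j$ is continuous and piecewise smooth. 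Dividing by $C_{\rm FE}$ puts $\psi_j/C_{\rm FE}$ in $F_{1,d}$. We deliberately use $n=1$ and not higher smoothness: $\phi_j$ is not globally $C^1$, so $n=1$ is the natural choice, and it yields exactly the claimed width $\gamma^{-d}$.

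Next we apply Lemma~\ref{lemma_yarotsky} with $n=1$ and accuracy $\gamma/C_{\rm FE}$. This gives a network $\tilde\Psi$ with $\|\psi_j-C_{\rm FE}\tilde\Psi\|_{L^\infty([0,1]^d)}\le\gamma$, depth $\lesssim 1+\log(1/\gamma)$, width $\lesssim\gamma^{-d}$, and size $\lesssim\gamma^{-d}(1+\log(1/\gamma))$. The constants depend on $C_{\rm FE}$ but not on $h$ or $j$.

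The main obstacle is the behavior outside the box, and it takes three steps. First, $T_j^{-1}(x)$ can leave $[0,1]^d$ for $x\in\Omega\setminus D_{j,h}$. Second, a Yarotsky network need not vanish there. Third, even inside the box it need not vanish where $\phi_j=0$. The first idea is to pre-compose with coordinatewise clipping to $[0,1]$, which is an exact constant-size ReLU operation of the same form as $\chi_{M_0}$, after shifting. This maps points outside the box to boundary points of $[0,1]^d$. The boundary of $D_{j,h}$ lies outside $\omega_{j,h}$, so there $\psi_j=0$ and the network value is within $\gamma$ of zero. Because the clipped point lies in $[0,1]^d$, where the uniform bound holds, the error outside $D_{j,h}$ is also at most $\gamma$.

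I expect the delicate point to be the assumption that $\phi_j$ vanishes on $\partial D_{j,h}$. This holds if $D_{j,h}$ is chosen to contain the closed support, which is possible after enlarging the box by a factor $O(h)$ while keeping the constants $c_D,C_D$. With it, the network $\Phi_{j,h,\gamma}:=C_{\rm FE}\tilde\Psi\circ\clip\circ T_j^{-1}$ achieves $\|\phi_j-\Phi_{j,h,\gamma}\|_{L^\infty(\Omega)}\le\gamma$. The clip and the affine map add only constant depth and width $O(d)$, so the stated depth, width, and size bounds follow.
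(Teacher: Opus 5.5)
Your proposal is correct and follows essentially the same route as the paper. The paper also extends $\phi_j$ by zero, rescales it to the unit cube through a box $D_{j,h}$ chosen so that the support lies in its interior, and uses the uniform $W^{1,\infty}$ bound to apply Lemma~\ref{lemma_yarotsky} with $n=1$ at accuracy $\gamma$ divided by that bound. It then precomposes with coordinatewise clipping, so that points outside the box land on $\partial[0,1]^d$, where the rescaled basis function vanishes and the error is still at most $\gamma$. As the paper remarks explicitly, the pointwise bound at those clipped boundary points relies on continuity of both the network and $\psi_j$, which turns the $L^\infty$ estimate into a pointwise one.
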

\begin{proof}
Due to the $h-$dependent bound on $\nabla \phi$, direct apply of Yarotsky is not working. The full argument is included in the Appendix \ref{sec_appendix_proof_of_fem_basis_approx}.
\end{proof}

The next lemma estimates the error contribution from approximating the FEM basis using the network.

\begin{lemma}
\label{lemma_basis_error_cumulation_nonlocal}
Assume Assumption \ref{assumption_fem_mesh_nonlocal} (FEM setting) holds, and let $\Phi_{j, h, \gamma}$ be the network in Lemma~\ref{lemma_basis_network_nonlocal}.
Then for every coefficient vector
$a=(a_1,\ldots,a_N)^T\in\mathbb R^N$, we have
\begin{align*}
\norm{
   \sum_{j=1}^N
   a_j\bigl(\phi_j-\Phi_{j,h,\gamma}\bigr)
 }_{L^2(\Om)}
 \leq
 C_{\rm bas}h^{-d}\gamma\norm{a}_M,
\end{align*}
where $C_{\rm bas}$ is independent of $h$, $a$, and
$\gamma$.
\end{lemma}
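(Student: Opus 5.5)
The plan is to bound the $L^2$ norm by the $L^\infty$ error of each basis network, combined with the local-overlap structure from Lemma~\ref{lemma_fem_properties}, and then to convert the Euclidean norm of $a$ into the mass norm.

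\textbf{Step 1: pointwise bound.} Set $E(x):=\sum_{j=1}^N a_j(\phi_j-\Phi_{j,h,\gamma})(x)$. Lemma~\ref{lemma_basis_network_nonlocal} gives $|\phi_j-\Phi_{j,h,\gamma}|\le\gamma$ on $\Omega$ for every $j$. The triangle inequality then yields, for every $x\in\Omega$,
\begin{align*}
|E(x)|\le \gamma\sum_{j=1}^N|a_j| = \gamma\norm{a}_1 .
\end{align*}
Because no support-preservation property is assumed for $\Phi_{j,h,\gamma}$, I cannot localize the error to $D_{j,h}$. I therefore use this crude global bound rather than the overlap count $C_{\rm loc}$.

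\textbf{Step 2: integrate and change norms.} Integrating over $\Omega=[0,1]^d$ gives $\norm{E}_{L^2(\Om)}\le |\Omega|^{1/2}\gamma\norm{a}_1$. Cauchy--Schwarz gives $\norm{a}_1\le N^{1/2}\norm{a}_2$. The lower mass-matrix equivalence in Lemma~\ref{lemma_fem_properties}, $c_Mh^d\norm{a}_2^2\le a^TMa$, gives $\norm{a}_2\le c_M^{-1/2}h^{-d/2}\norm{a}_M$. Combining these with $N\sim h^{-d}$:
\begin{align*}
\norm{E}_{L^2(\Om)}\le |\Omega|^{1/2}\gamma\, N^{1/2}\, c_M^{-1/2}h^{-d/2}\norm{a}_M \le C_{\rm bas}h^{-d}\gamma\norm{a}_M .
\end{align*}
The constant $C_{\rm bas}$ collects $|\Omega|^{1/2}$, $c_M^{-1/2}$, and the constant in $N\lesssim h^{-d}$. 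None of these depend on $h$, $a$, or $\gamma$.

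\textbf{Main obstacle.} The only delicate point is whether the lack of support preservation forces the global sum $\sum_j|a_j|$. I commit to the global argument above, since it produces exactly the stated $h^{-d}$ factor.

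If localization were available, say by multiplying each network with a ReLU cutoff supported on $D_{j,h}$, one could instead bound $|E(x)|^2\le C_{\rm loc}\gamma^2\sum_{j:x\in D_{j,h}}|a_j|^2$. Integrating would give a sharper estimate of order $h^{d/2}\gamma\norm{a}_2\lesssim\gamma\norm{a}_M$. This is not needed for the statement as written.
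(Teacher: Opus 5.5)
Your proposal is correct and follows essentially the same argument as the paper. Both use the uniform bound $\gamma$ on each $\phi_j-\Phi_{j,h,\gamma}$ to get $\|E\|_{L^2(\Omega)}\le|\Omega|^{1/2}\gamma\|a\|_1$; the paper applies the triangle inequality directly in $L^2$ rather than pointwise, but this is cosmetic. Both then chain $\|a\|_1\le\sqrt{N}\|a\|_2$, $N\lesssim h^{-d}$, and the lower mass-matrix equivalence $\|a\|_2\le c_M^{-1/2}h^{-d/2}\|a\|_M$ to obtain the $h^{-d}\gamma$ factor.
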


\begin{proof}
Set $ e_j:=\phi_j-\Phi_{j,h,\gamma}.$
By Lemma~\ref{lemma_basis_network_nonlocal}, we have $ \norm{e_j}_{L^\infty(\Om)}\leq\gamma,$
and hence $\norm{e_j}_{L^2(\Om)}
 \leq
 \abs{\Om}^{1/2}\gamma. 
 $
The triangle inequality gives
\begin{align*}
 \norm{\sum_{j=1}^Na_je_j}_{L^2(\Om)}
\leq
 \sum_{j=1}^N\abs{a_j}\norm{e_j}_{L^2(\Om)}
 \leq
 \abs{\Om}^{1/2}\gamma\norm{a}_1.
\end{align*}
By the finite-dimensional Cauchy--Schwarz inequality, $\norm{a}_1\leq\sqrt{N}\norm{a}_2.$
Assumption \ref{assumption_fem_mesh_nonlocal} and the consequent Lemma \ref{lemma_fem_properties} implies $ N\leq C_Nh^{-d}.$
Moreover, the mass-matrix spectral equivalence gives $c_Mh^d\norm{a}_2^2
 \leq
 a^TMa
 =
 \norm{a}_M^2, $ or, $ \norm{a}_2
 \leq
 c_M^{-1/2}h^{-d/2}\norm{a}_M.$
It follows that,
\begin{align*}
 \norm{\sum_{j=1}^Na_je_j}_{L^2(\Om)}
 &\leq
 C_{\rm bas} \gamma
 h^{-d/2}h^{-d/2}\norm{a}_M =
 C_{\rm bas}\,h^{-d}\gamma\norm{a}_M.
\end{align*}
\end{proof}

The basis networks are not required to preserve the supports of the finite element basis functions and may be nonzero elsewhere in $\Omega$.
The next theorem estimates the full operator approximation
without requiring the basis networks to preserve the
finite element supports.

\begin{theorem}
\label{thm_main_nonlocal}
Suppose Assumption~\ref{assumption_cfl} (CFL condition), \ref{assumption_quadrature} (exact quadrature), \ref{assumption_reference_bounded} (numerical bounded), and \ref{assumption_fem_mesh_nonlocal} (FEM setting) hold.  
Besides, assume the numerical error satisfies, 
$\sup_{v_0\in\mathcal A}
\|v(T;v_0)-v_h^{N_t}(v_0)\|_{L^2(\Omega)}
\le C_{\rm num}(h^m+\Delta t).$
Let $0<\varepsilon<1/2$ and let $0<\delta<M_0^2$, $0<\gamma<1$ to be determined later.
There exist scalers $N, N_t$, $h$, $\Delta t$ to be estimated later and 
\begin{enumerate}[label=(\roman*),leftmargin=2.5em]
\item a ReLU coefficient network $ \NN_{h,\delta}:\mathbb R^N\to\mathbb R^N,$
obtained by iterating a block $u \rightarrow L_hu + C_hR_{\delta}(Bu)$ $N_t \sim h^{-2}$ times, where $R_{\delta}$ is a square clipping network,
\item ReLU basis networks $\Phi_{j,h,\gamma}:\mathbb R^d\to\mathbb R,
 j=1,\ldots,N,$
\end{enumerate}
such that the neural operator
$\mathcal{G}_{h,\varepsilon}(v_0)(x)
 :=
 \sum_{j=1}^N
 \bigl[\NN_{h,\delta}(\Enc(v_0))\bigr]_j
 \Phi_{j,h,\gamma}(x)$
satisfies the error estimates
\begin{align}
\label{eq:main-error-with-basis-nonlocal}
    \sup_{v_0\in\calA}
 \norm{
 v(T;v_0)-\calG_{h,\varepsilon}(v_0)
 }_{L^2(\Om)}
 \leq
 C\bigl(h^m+\dt+\delta+h^{-d}\gamma\bigr) \leq C\varepsilon.
\end{align}
The constant $C$ is independent of $h$, $\dt$, $\delta$,
$\gamma$, and $v_0$.
Additionally by the CFL and $\eta:=\min\{m,2\}.$
One can choose
\begin{align*}
     h\sim\varepsilon^{1/\eta},
 \quad
 \dt\sim h^2,
 \quad
 \delta\sim\varepsilon,
 \quad
 \gamma\sim\varepsilon h^d
 \sim
 \varepsilon^{1+d/\eta},
 \quad
 N \sim \varepsilon^{-d/\eta}
\end{align*}

Coefficient network $\NN_{h,\delta}(\Enc(v_0))$ has depth $\sim\delta^{-2/\eta}\log(1/\delta)$ and width $\delta^{-d/\eta}$.
Each $\Phi_{j,h,\gamma}$ has depth $\log(1/\gamma)$ and width $\gamma^{-d}$.
The dominating depth of $\mathcal{G}_{h, \varepsilon}$ comes from $\NN_{h,\delta}$ and the dominating width comes from parallel concatenating all $N$ basis network $\Phi_{j,h,\gamma}$, and we have
\begin{equation*}
 \operatorname{depth}(\calG_{h, \varepsilon})
 \lesssim
 \varepsilon^{-2/\eta}\log(1/\varepsilon),
 \quad
 \operatorname{width}(\calG_{h, \varepsilon})
 \lesssim
 \varepsilon^{-d/\eta-d-d^2/\eta}.
\end{equation*}

\end{theorem}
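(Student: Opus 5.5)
The proof is a three-term triangle inequality. It reuses the coefficient-network construction from the proof of Theorem~\ref{thm_semi_approximation} and adds the basis-approximation error from Lemma~\ref{lemma_basis_error_cumulation_nonlocal}.

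\textbf{Construction.} First build $\NN_{h,\delta}$ exactly as in Theorem~\ref{thm_semi_approximation}: stack $N_t$ blocks $u\mapsto L_hu+C_hR_\delta(Bu)$, with $R_\delta=\tilde R_\delta\circ\chi_{M_0}$ from Definition~\ref{def_network_predicted_traj}. This gives the output $u_{h,\delta}^{N_t}$. For each $j$, take $\Phi_{j,h,\gamma}$ from Lemma~\ref{lemma_basis_network_nonlocal}. Then split the error as
\begin{align*}
\norm{v(T;v_0)-\calG_{h,\varepsilon}(v_0)}_{L^2}
\le \norm{v(T;v_0)-v_h^{N_t}}_{L^2}
+\norm{v_h^{N_t}-v_{h,\delta}^{N_t}}_{L^2}
+\norm{\textstyle\sum_j (u_{h,\delta}^{N_t})_j(\phi_j-\Phi_{j,h,\gamma})}_{L^2}.
\end{align*}
The three terms are handled as follows.
\begin{itemize}
\item The first term is at most $C_{\rm num}(h^m+\dt)$ by the stated numerical estimate.
\item The second term is at most $C_T\delta$ by Proposition~\ref{prop_accumulative_error}. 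That proposition does not use the $P_1$ structure; it only needs Assumptions~\ref{assumption_quadrature}, \ref{assumption_reference_bounded} and the CFL condition, so it holds for $P_r$ as well.
\item The third term is at most $C_{\rm bas}h^{-d}\gamma\norm{u_{h,\delta}^{N_t}}_M$ by Lemma~\ref{lemma_basis_error_cumulation_nonlocal}.
\end{itemize}

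\textbf{Main obstacle.} The third term needs a bound on $\norm{u_{h,\delta}^{N_t}}_M$ that is uniform in $h$, $\delta$ and $v_0$. The network trajectory is not assumed bounded, so this has to be derived. Write $\norm{u_{h,\delta}^{N_t}}_M\le\norm{u_h^{N_t}}_M+\norm{e^{N_t}}_M$.
\begin{itemize}
\item By Assumption~\ref{assumption_reference_bounded} and exact quadrature, $\norm{u_h^{N_t}}_M^2=\norm{v_h^{N_t}}_{L^2}^2=\norm{Bu_h^{N_t}}_W^2\le M_*^2|\Om|$. This uses $B^TWB=M$, so that $\norm{Be}_W=\norm{e}_M$.
\item Proposition~\ref{prop_accumulative_error} gives $\norm{e^{N_t}}_M\le C_T\delta\le C_TM_0^2$.
\end{itemize}
Hence $\norm{u_{h,\delta}^{N_t}}_M\le M_*|\Om|^{1/2}+C_TM_0^2$. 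Combining the three terms yields \eqref{eq:main-error-with-basis-nonlocal}.

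\textbf{Parameters.} Choose $\dt\sim h^2$, $h\sim\varepsilon^{1/\eta}$, $\delta\sim\varepsilon$ and $\gamma\sim\varepsilon h^d\sim\varepsilon^{1+d/\eta}$. Then each of the four terms is $\lesssim\varepsilon$, and $N\sim h^{-d}\sim\varepsilon^{-d/\eta}$.

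\textbf{Complexity.}
\begin{itemize}
\item The depth of $\NN_{h,\delta}$ is exactly as in Theorem~\ref{thm_semi_approximation}: $N_t$ blocks of depth $O(\log(1/\delta))$, giving $\varepsilon^{-2/\eta}\log(1/\varepsilon)$. Its width is $O(N+Q)\sim\varepsilon^{-d/\eta}$.
\item Each basis network has depth $O(\log(1/\gamma))=O(\log(1/\varepsilon))$ and width $\gamma^{-d}\sim\varepsilon^{-d-d^2/\eta}$.
\item The $N$ basis networks are concatenated in parallel, with depths padded by identity ReLU layers to a common value. Their total width is $N\gamma^{-d}\sim\varepsilon^{-d/\eta-d-d^2/\eta}$, which dominates the coefficient-network width.
\item The output is $\sum_j[\NN]_j\Phi_j$, a sum of products of two network outputs. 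Its exact form as one network is only structural: the bilinear pairing is the operator-form readout, as in \eqref{eqn_cc_neural_operator}. If a single ReLU network is demanded, each product can be replaced by a Yarotsky product network from Lemma~\ref{lemma_yarotsky_product} with tolerance $\sim\varepsilon/N$. This adds only $O(\log(1/\varepsilon))$ depth and does not change the rates.
\end{itemize}
The overall depth is therefore dominated by $\NN_{h,\delta}$, which gives the stated bounds.
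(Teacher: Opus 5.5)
Your proposal is correct and follows the paper's proof essentially step for step: the same three-way splitting, Proposition~\ref{prop_accumulative_error} for the coefficient term, and Lemma~\ref{lemma_basis_error_cumulation_nonlocal} for the basis term with the bound $\norm{u_{h,\delta}^{N_t}}_M\le M_*|\Om|^{1/2}+C_TM_0^2$ (obtained from $B^TWB=M$ and Assumption~\ref{assumption_reference_bounded}), together with the same parameter choices and the same width count $N\gamma^{-d}$. Your closing remark on realizing the bilinear readout with product networks goes beyond the paper, which treats the pairing structurally; if you keep it, it needs a uniform range bound on the coefficients, e.g.\ $\norm{a}_\infty\le\norm{a}_2\lesssim h^{-d/2}\norm{a}_M$ from Lemma~\ref{lemma_fem_properties}, and this only adds a logarithmic factor to the depth.
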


\begin{proof}
We have the numerical  error estimate $\|v(T; v_0) - v_h^{N_t}(v_0)\|_{L^2}$.
Now we need to construct networks to construct a network realization $v_{h, \delta}^{N_t}(v_0) = \sum_{j = 1}^N (u_{h, \delta}^{N_t})_j\phi_j$ to approximate $v_h^{N_t}(v_0)$.
For the coefficient $(u_{h, \delta}^{N_t})_j$, we use network $[\NN_{h,\delta}(\Enc(v_0))]_j$ and the construction is the same as Theorem \ref{thm_semi_approximation} (though the $P_r$ assumptions are different however this does not affect the coefficient construction) and we omit the details. 
For the basis $\phi_j$, the approximation by $\Phi_{j,h, \gamma}$ in $L^{\infty}$ norm with $\gamma$ accuracy is studied in Lemma \ref{lemma_basis_network_nonlocal}.

Now add and subtract the reference finite element solution
and the exact-basis reconstruction of the coefficient-network output:
\begin{align}
 v(T;v_0)
 -\mathcal{G}_{h,\varepsilon}(v_0)
 ={}&
 \bigl(v(T;v_0)-v_h^{N_t}(v_0)\bigr)
 +
 \sum_{j=1}^N
 \left[(u_h^{N_t})_j-(u_{h,\delta}^{N_t})_j\right]\phi_j
 +
 \sum_{j=1}^N
 (u_{h,\delta}^{N_t})_j
 \bigl(\phi_j-\Phi_{j,h,\gamma}\bigr).
 \label{eq:three-way-error-splitting-nonlocal}
\end{align}
The first term is bounded by
the classical numerical error estimate.  
For the second term, the exact
mass-matrix identity and Proposition~\ref{prop_accumulative_error} give
\begin{align}
 \norm{
 \sum_{j=1}^N
 \left[(u_h^{N_t})_j-(u_{h,\delta}^{N_t})_j\right]\phi_j
 }_{L^2(\Om)}
 =
 \norm{u_h^{N_t}-u_{h,\delta}^{N_t}}_M
 \leq
 C_T\delta.
 \label{eq:coefficient-network-terminal-error-nonlocal}
\end{align}

It remains to bound the basis-network contribution.  
Since $B^TWB=M$ by the Assumption \ref{assumption_quadrature},
and the bounded numerical solutions Assumption \ref{assumption_reference_bounded} imply
\begin{align*}
 \norm{u_h^{N_t}}_M^2
 =
 \norm{Bu_h^{N_t}}_W^2
 =
 \sum_{q=1}^Q\omega_q\abs{(Bu_h^{N_t})_q}^2
\leq
 M_*^2\sum_{q=1}^Q\omega_q
 =
 M_*^2\abs{\Om}.
\end{align*}
Next using \eqref{eq:coefficient-network-terminal-error-nonlocal}, we have 
$ \norm{u_{h,\delta}^{N_t}}_M = \norm{u_{h,\delta}^{N_t} + u_h^{N_t} - u_h^{N_t} }_M
 \leq
 M_*\abs{\Om}^{1/2}+C_T\delta.$
Applying Lemma~\ref{lemma_basis_error_cumulation_nonlocal} with
$a=u_{h,\delta}^{N_t}$ gives
\begin{align*}
 \norm{
 \sum_{j=1}^N
 (u_{h,\delta}^{N_t})_j
 \bigl(\phi_j-\Phi_{j,h,\gamma}\bigr)
 }_{L^2(\Om)}
\leq
 C_{\rm bas}h^{-d}\gamma
 \norm{u_{h,\delta}^{N_t}}_M
 \leq
 C_{\rm bas}h^{-d}\gamma
 \left(M_*\abs{\Om}^{1/2}+C_T\delta\right).
\end{align*}
Since $0<\delta<M_0^2$ and $M_0$ is fixed independently
of the approximation parameters,
$M_*|\Omega|^{1/2}+C_T\delta
\le M_*|\Omega|^{1/2}+C_TM_0^2$.
Taking the $L^2(\Om)$ norm in
\eqref{eq:three-way-error-splitting-nonlocal}, applying the triangle
inequality, and combining the preceding estimates proves
\eqref{eq:main-error-with-basis-nonlocal}.

Next, by the CFL assumption we have $ \norm{
 v(T;v_0)-\calG_{h,\varepsilon}(v_0)
 }_{L^2(\Om)}
 \leq
 C\bigl(h^m+h^2+\delta+h^{-d}\gamma\bigr).$
With $\eta=\min\{m,2\}$, and we choose the parameters so that each error contribution
is $O(\varepsilon)$.
Then $ h^m+h^2\lesssim\varepsilon,
 \delta\lesssim\varepsilon,
 h^{-d}\gamma\lesssim\varepsilon.$
 Or $h \sim \varepsilon^{1/\eta}$, $\gamma = \varepsilon^{1+d/\eta}$.

\medskip
We finally estimate the size of the network.  
As established in Theorem \ref{thm_semi_approximation}, coefficient network $\NN_{h,\delta}(\Enc(v_0))$ is constructed iterating the block $u \rightarrow L_hu + C_hR_{\delta}(Bu)$ $N_t \sim h^{-2}$ times, consequently,
\begin{align*}
     &\operatorname{depth}(\NN_{h,\delta})
 \lesssim
 h^{-2}\bigl(1+\log(1/\delta)\bigr)\sim \varepsilon^{-2/\eta}\log(\varepsilon^{-1}),\\
 &\operatorname{width}(\NN_{h,\delta})
 \lesssim
 N+Q
 \lesssim
 h^{-d}\sim \varepsilon^{-d/\eta}.
\end{align*}
By Lemma~\ref{lemma_basis_network_nonlocal}, all $N$ basis networks can be
evaluated in parallel with
\begin{align*}
\operatorname{depth}
 \bigl(\{\Phi_{j,h,\gamma}\}_{j=1}^N\bigr)
 \lesssim
 1+\log(1/\gamma) \sim \left(1+\frac d\eta\right)\log(1/\varepsilon).
\end{align*}
\textcolor{black}{The width for one $\Phi_{j, h, \gamma}$ is $\lesssim \gamma^{-d}$}.
Due to the parallel concatenation of all $\Phi_{j,h,\gamma}$,
the width is $N\cdot \text{ width of } \Phi_{j,h,\gamma}$, which gives,
\begin{align*}
    \text{width}\sim h^{-d}\gamma^{-d} \sim  \varepsilon^{-d/\eta} \varepsilon^{-d-d^2/\eta}.
\end{align*}

\end{proof}

\begin{remark}[\textcolor{black}{Tensor-product $Q_r$ elements}]
The theorem is written for continuous $P_r$ Lagrange elements on affine
simplicial meshes.  The same error proof applies to continuous $Q_r$
elements on affine tensor-product meshes when the basis approximation lemma
is available for that element family and the quadrature is exact on
$\mathbb Q_{3r}(K)$.  In that case the number of local degrees of freedom is
$(r+1)^d$, while the global scalings $N\sim h^{-d}$ and
$Q\sim h^{-d}$ and the final rates are unchanged for fixed $r$.
\end{remark}

\subsection{Full approximation case with localized basis networks}
\label{sec_full_approximation_local}
In the last section, we consider using ReLU network to approximate the basis and derive the analysis.
However, the network may not present local properties even if the basis itself has a local support.

\subsubsection{Strict support-preserving approximation}
In this section, we will derive a new network approximation to FEM basis by enforcing the local support.
Similar results see \cite{lanthaler2022nonlinear,ali2021approximation, yarotsky2017error}.
Due to the localized argument, the rate will also be improved.

\begin{lemma}
\label{lemma_support_preserving_approximation}
Let $ D=\prod_{i=1}^d[a_i,b_i]\subset\mathbb R^d$
be a closed box with nonempty interior, and let $n\geq1$ be an integer.
Suppose that $f\in W^{n,\infty}(\mathbb R^d)$ and
$\operatorname{supp}(f)\subseteq D$.  
Then for any $0<\varepsilon<1$, there exists a scaler $C_f>0$, depending on $D$, $d$, $n$ and an upper bound for $\|f\|_{W^{n,\infty}(\mathbb R^d)}$, 
ReLU networks $\widetilde f_\varepsilon:\mathbb R^d\to\mathbb R$ such that
\begin{align*}
 \norm{f-\widetilde f_\varepsilon}_{L^\infty(\mathbb R^d)}
 \leq \varepsilon, \text{ and }
 \operatorname{supp}(\widetilde f_\varepsilon)\subseteq D.
\end{align*}
Moreover, the network can be chosen with same size as usual,
\begin{align*}
 \operatorname{depth}(\widetilde f_\varepsilon)
 &\lesssim C_f\bigl(1+\log(1/\varepsilon)\bigr),\\
 \operatorname{width}(\widetilde f_\varepsilon)
 &\lesssim \varepsilon^{-d/n},\\
 \operatorname{size}(\widetilde f_\varepsilon)
 &\lesssim
 C_f\varepsilon^{-d/n}\bigl(1+\log(1/\varepsilon)\bigr).
\end{align*}
Here size is the number of nonzero affine weights.
\end{lemma}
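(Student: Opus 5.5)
The plan is to take a Yarotsky approximant of $f$ and multiply it by a ReLU cutoff that vanishes identically outside $D$. Equivalently, the approximant is clipped by an envelope that is zero off $D$. The difficulty is that a product with a cutoff would normally require Lemma~\ref{lemma_yarotsky_product}, which is only accurate and not exactly zero off $D$. So instead I will use a clipping construction that keeps exact zeros.

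First, rescale: let $T:[0,1]^d\to D$ be the affine bijection and $g:=f\circ T$ on $[0,1]^d$. Then $\|g\|_{W^{n,\infty}}\le C(D,n)\|f\|_{W^{n,\infty}}=:K$. Apply Lemma~\ref{lemma_yarotsky} to $g/K$ with accuracy $\varepsilon/(2K)$ to get a network $\Phi$ with $\|g-\Phi\|_{L^\infty([0,1]^d)}\le\varepsilon/2$. Its depth is $\lesssim C(1+\log(1/\varepsilon))$ and its width is $\lesssim\varepsilon^{-d/n}$, with constants depending on $K$, $d$, $n$, $D$. Setting $\widetilde\Phi:=\Phi\circ T^{-1}$ (one extra affine layer) gives $|f-\widetilde\Phi|\le\varepsilon/2$ on $D$, but $\widetilde\Phi$ may be nonzero outside $D$.

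Second, build an exactly supported envelope. Since $f\in W^{n,\infty}$ with $n\ge1$, $f$ is Lipschitz with constant $L\le\|f\|_{W^{1,\infty}}$. Because $f$ vanishes outside $D$, this gives $|f(x)|\le L\,\operatorname{dist}(x,\partial D)$ for $x\in D$. Define
\begin{align*}
 \psi(x):=\min_{1\le i\le d}\min\{\relu(x_i-a_i),\relu(b_i-x_i)\}.
\end{align*}
This is a ReLU network of depth $O(\log d)$ and width $O(d)$, using $\min(s,t)=s-\relu(s-t)$. It satisfies $\psi\ge0$, $\psi=0$ off $\operatorname{int}D$, and $\psi(x)=\operatorname{dist}(x,\partial D)$ inside $D$. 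Set $H:=L\psi$ and
\begin{align*}
 \widetilde f_\varepsilon(x):=\max\{-H(x),\min\{H(x),\widetilde\Phi(x)\}\},
\end{align*}
which is the pointwise clip of $\widetilde\Phi$ to $[-H,H]$. It is realized with $O(1)$ extra ReLU layers, in the same way as $\chi_{M_0}$. Outside $D$ we have $H=0$, so $\widetilde f_\varepsilon=0$ and $\operatorname{supp}\widetilde f_\varepsilon\subseteq D$. Inside $D$, $f(x)\in[-H(x),H(x)]$, and clipping onto an interval containing $f(x)$ is $1$-Lipschitz and fixes $f(x)$. Hence $|f-\widetilde f_\varepsilon|\le|f-\widetilde\Phi|\le\varepsilon/2$ on $D$, and the error is $0$ elsewhere, so the $L^\infty(\mathbb R^d)$ bound holds.

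Third, count the size. The clip and envelope add constant depth and width depending only on $d$. $\widetilde\Phi$ and $H$ are run in parallel, with identity channels (via $s=\relu(s)-\relu(-s)$) carrying $H$ through the depth of $\widetilde\Phi$. This costs $O(d)$ extra width times the depth, which is absorbed into $C_f\varepsilon^{-d/n}(1+\log(1/\varepsilon))$. The depth, width and size bounds are therefore those of Lemma~\ref{lemma_yarotsky} up to constants in $C_f$.

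The main obstacle is the first step's constant bookkeeping: checking that the rescaling to $[0,1]^d$ and the normalization by $K$ change only $C_f$ and not the exponent $d/n$. I would handle this by taking $C_f$ to depend explicitly on $K$, $L$ and the side lengths of $D$, as the statement permits.
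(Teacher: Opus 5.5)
Your proof is correct, but it localizes differently from the paper.

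\textbf{The paper's route.} The paper multiplies the Yarotsky approximant $F_\rho$ by a trapezoidal mask $M_\kappa$, equal to $1$ on the shrunken box $D_\kappa$ and $0$ off $D$. The multiplication goes through the product network $\operatorname{Mult}_{\mu,B_f}$, whose exact-zero property $\operatorname{Mult}_{\mu,B_f}(s,0)=0$ gives exact zeros off $D$. On the boundary layer $D\setminus D_\kappa$ it then balances three errors $\mu+\rho+L_f\kappa$, taking $\rho=\mu=\varepsilon/3$ and $\kappa\lesssim\varepsilon/L_f$.

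\textbf{Your route.} You instead clip $\widetilde\Phi$ to $[-H,H]$ with $H=L\operatorname{dist}(\cdot,\partial D)$. This envelope is exactly ReLU-representable, vanishes off $D$, and contains $f$. The clip is nonexpansive and fixes $f(x)$, so the final error is just the Yarotsky error.

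\textbf{What each route buys.} Your localization adds only depth and width depending on $d$. It needs no product network and no buffer width $\kappa$. It also avoids mask weights of size $\varepsilon^{-1}$; the paper's $M_\kappa$ has slopes $1/\kappa$. Finally, it never needs a bound on the range of the raw approximant, which is the issue flagged in the paper's remark on the scope of the localization lemma. The paper's mask-and-multiply route is more modular. It only needs $|f|$ to be small on a thin boundary layer, so the argument would survive with a modulus of continuity in place of $L_f$. Your envelope instead requires $|f|$ to be dominated pointwise by a ReLU-representable function vanishing on $\partial D$, which is exactly where $n\ge1$ (Lipschitz decay) is used.

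\textbf{A correction to your motivation.} Lemma~\ref{lemma_yarotsky_product}, as stated in the paper, allows $R_\delta(s,0)=R_\delta(0,s)=0$ for every $s\in\mathbb R$. A product with a cutoff is therefore already exactly zero off $D$, and that is precisely what the paper's proof exploits. Your clipping construction is a valid alternative, not a necessity.
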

 The proof uses the techniques of creating a mask with local support and Yarotsky's product network property where the network returns zero when one input is zero.
 We give the proof the Appendix \ref{sec_proof_of_local_approx_lemma}.

\begin{remark}[Scope of the localization lemma]
\label{rem:scope-support-local}
The multiplication subnet must be accurate on a bounded interval containing
the range of the raw approximant; a multiplier stated only on $[0,1]^2$ is
not sufficient for a general signed function.  The exact zero property is
global and is what enforces the support restriction.  The buffer width must
also be small enough that $D_\kappa$ is nonempty. 
These points affect
constants and weight magnitudes but not the stated asymptotic depth and size.
\end{remark}

\subsubsection{Localized neural approximations of finite element bases}
Next, we study the approximation to FEM basis using the localized network.
\begin{lemma}
\label{lemma_basis_network_local}
Suppose Assumption~\ref{assumption_fem_mesh_nonlocal} (FEM setting) holds.  
For any $0<\gamma<1$ and basis function $\phi_j$, there is a
ReLU network $ \Phi_{j,h,\gamma}:\mathbb{R}^d\to\mathbb{R}$
satisfying
\begin{align*}
 \norm{\phi_j-\Phi_{j,h,\gamma}}_{L^\infty(\Om)}
 &\leq\gamma,\\
 \operatorname{supp}(\Phi_{j,h,\gamma})
 &\subseteq D_{j,h}.
\end{align*}
Moreover, one can estimate the network size,
\begin{align*}
 \operatorname{depth}(\Phi_{j,h,\gamma})
 &\lesssim
 C_{\rm bas}\bigl(1+\log(1/\gamma)\bigr),\\
 \operatorname{width}(\Phi_{j,h,\gamma})
 &\lesssim \gamma^{-d},\\
 \operatorname{size}(\Phi_{j,h,\gamma})
 &\lesssim
 C_{\rm bas}\gamma^{-d}
 \bigl(1+\log(1/\gamma)\bigr),
\end{align*}
where $C_{\rm bas}$ is independent of $h$, $j$, and
$\gamma$.  The numerical values of some affine weights may
depend on $h$.
\end{lemma}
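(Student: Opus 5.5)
The plan is to reduce to Lemma~\ref{lemma_support_preserving_approximation} through an affine rescaling that maps the box $D_{j,h}$ onto a box of unit size. The scaled basis function then has $h$-independent Sobolev norms. The key point is that $\phi_j$ itself has derivatives of size $h^{-k}$, so Lemma~\ref{lemma_support_preserving_approximation} cannot be applied directly without $h$-dependent constants. This is exactly the obstruction mentioned in the proof of Lemma~\ref{lemma_basis_network_nonlocal}.

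\textbf{Step 1 (rescaling).} By Lemma~\ref{lemma_fem_properties}, $\omega_{j,h}\subset D_{j,h}=\prod_i[a_{j,i},b_{j,i}]$ with $c_Dh\le b_{j,i}-a_{j,i}\le C_Dh$. Define the affine map $T_j(y)_i=a_{j,i}+(b_{j,i}-a_{j,i})y_i$, which sends $[0,1]^d$ onto $D_{j,h}$. Set $\hat\phi_j:=\phi_j\circ T_j$, extended by zero outside $[0,1]^d$; then $\operatorname{supp}\hat\phi_j\subseteq[0,1]^d$.

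\textbf{Step 2 (uniform regularity).} Here $n=1$ suffices, since the width $\gamma^{-d}$ corresponds to $n=1$ in Lemma~\ref{lemma_support_preserving_approximation}. Since $\phi_j$ is a continuous nodal Lagrange function that vanishes outside its support, it is Lipschitz on $\mathbb R^d$. The standard inverse and scaling estimates on shape-regular affine simplices give $\|\phi_j\|_{L^\infty}\le C$ and $\|\nabla\phi_j\|_{L^\infty}\le Ch^{-1}$, with $C$ depending only on $d$, $r$ and $C_{\rm shape}$. These follow by pulling back to the reference element, where there are finitely many shape functions, and using $\|DF_K^{-1}\|\lesssim\rho_K^{-1}\lesssim h^{-1}$. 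The chain rule then gives $\|\nabla\hat\phi_j\|_{L^\infty}\le C_Dh\cdot Ch^{-1}=C'$. Hence $\|\hat\phi_j\|_{W^{1,\infty}(\mathbb R^d)}\le C''$ uniformly in $h$ and $j$.

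\textbf{Step 3 (apply the localization lemma and pull back).} Apply Lemma~\ref{lemma_support_preserving_approximation} with $D=[0,1]^d$, $n=1$, $f=\hat\phi_j$ and accuracy $\gamma$. This yields $\widetilde f_\gamma$ with $\|\hat\phi_j-\widetilde f_\gamma\|_{L^\infty}\le\gamma$ and $\operatorname{supp}\widetilde f_\gamma\subseteq[0,1]^d$. Its depth, width and size are bounded by $C_f(1+\log(1/\gamma))$, $\gamma^{-d}$ and $C_f\gamma^{-d}(1+\log(1/\gamma))$. Because $D$ is fixed and the norm bound is uniform, $C_f$ is independent of $h$ and $j$.

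Define $\Phi_{j,h,\gamma}:=\widetilde f_\gamma\circ T_j^{-1}$. Since $T_j^{-1}$ is affine, it is absorbed into the first affine layer. This changes weight values, which is the source of the $h$-dependence of the weights, but it does not change the depth, width or number of nonzero weights. The sup-norm error is preserved under composition with a bijection, so $\|\phi_j-\Phi_{j,h,\gamma}\|_{L^\infty(\Omega)}\le\gamma$. Moreover, $\operatorname{supp}\Phi_{j,h,\gamma}=T_j(\operatorname{supp}\widetilde f_\gamma)\subseteq D_{j,h}$.

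\textbf{Main obstacle.} The delicate part is Step 2: obtaining the global Lipschitz bound with $h$-uniform constants. It requires continuity across element interfaces and the shape-regularity scaling. The mesh-dependence enters only through $h$ and $C_{\rm shape}$, so the constant must be tracked carefully to confirm it is independent of $j$.
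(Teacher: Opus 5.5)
Your proposal is correct and follows essentially the same route as the paper. You rescale $D_{j,h}$ to $[0,1]^d$, use the finite element scaling $\|\phi_j\|_{L^\infty}+h\|\nabla\phi_j\|_{L^\infty}\le C$ to get an $h$- and $j$-uniform $W^{1,\infty}$ bound, apply Lemma~\ref{lemma_support_preserving_approximation} with $n=1$ on the unit cube, and pull back through the inverse affine map; the only step the paper makes explicit that you leave implicit is extending $\phi_j$ by zero outside $\Omega$ before composing with $T_j$, since $D_{j,h}$ need not lie inside $\Omega$.
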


\begin{proof}
Extend $\phi_j$ by zero outside $\Omega$, and denote the
extension by $\bar\phi_j$. Continuity and the homogeneous
boundary condition imply that
$\bar\phi_j\in W^{1,\infty}(\mathbb R^d)$.
The finite element scaling estimate gives
$\|\bar\phi_j\|_{L^\infty}
+h\|\nabla\bar\phi_j\|_{L^\infty}\le C$,
uniformly in $h$ and $j$.

Write
$D_{j,h}=\prod_{i=1}^d[a_{j,i},a_{j,i}+\ell_{j,i}]$,
where $\ell_{j,i} \sim h$, and define the affine bijection
$F_{j,h}:\mathbb R^d\to\mathbb R^d$ by
$(F_{j,h}(y))_i=a_{j,i}+\ell_{j,i}y_i$.
Set $\psi_{j,h}=\bar\phi_j\circ F_{j,h}$.
Then
$\operatorname{supp}(\psi_{j,h})\subseteq[0,1]^d$.
Moreover,
$\partial_{y_i}\psi_{j,h}
=\ell_{j,i}(\partial_{x_i}\bar\phi_j)\circ F_{j,h}$
almost everywhere, so
$\|\psi_{j,h}\|_{W^{1,\infty}(\mathbb R^d)}\le C$
uniformly in $h$ and $j$.
\medskip

Apply Lemma~\ref{lemma_support_preserving_approximation}
with $n=1$ on the fixed box $[0,1]^d$.
It provides a ReLU network $\widehat\Phi_{j,h,\gamma}$
with uniform error at most $\gamma$, support contained
in $[0,1]^d$, and the stated complexity bounds.
Define
$\Phi_{j,h,\gamma}
=\widehat\Phi_{j,h,\gamma}\circ F_{j,h}^{-1}$.
Since $F_{j,h}$ is bijective, the approximation error
remains at most $\gamma$, and
$\operatorname{supp}(\Phi_{j,h,\gamma})\subseteq D_{j,h}$.
The affine transformation preserves the complexity bounds
up to uniform constants, although its weights may depend
on $h$.
\end{proof}

Next, we will estimate the error contributed due to the local approximation of basis.
This lemma improves the former Lemma \ref{lemma_basis_error_cumulation_nonlocal} where the basis network has no local support.
Note that a factor of $h^{-d}$ is gone.

\begin{lemma}
\label{lem_basis_error_accumulation_local}
Suppose Assumption~\ref{assumption_fem_mesh_nonlocal} (FEM setting) holds and let the basis
networks be as in Lemma~\ref{lemma_basis_network_local}.  Then, for every
$a=(a_1,\ldots,a_N)^T\in\mathbb R^N$,
\begin{align*}
     \norm{
 \sum_{j=1}^N
 a_j\bigl(\phi_j-\Phi_{j,h,\gamma}\bigr)
 }_{L^2(\Om)}
 \leq
 C_{\rm acc}\gamma\norm{a}_M,
\end{align*}
where $C_{\rm acc}$ is independent of $h$, $a$, and
$\gamma$.
\end{lemma}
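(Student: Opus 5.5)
We exploit the support property $\operatorname{supp}(\Phi_{j,h,\gamma})\subseteq D_{j,h}$ from Lemma~\ref{lemma_basis_network_local}, together with $\operatorname{supp}\phi_j\subseteq \overline{\omega_{j,h}}\subset D_{j,h}$. Set $e_j:=\phi_j-\Phi_{j,h,\gamma}$. Then $\operatorname{supp}(e_j)\subseteq D_{j,h}$ and $\|e_j\|_{L^\infty(\Om)}\le\gamma$. Hence for every $x\in\Om$ the sum $\sum_j a_je_j(x)$ has at most $C_{\rm loc}$ nonzero terms, by the bounded-overlap property in Lemma~\ref{lemma_fem_properties}. This pointwise sparsity is what replaces the crude $\ell^1$ estimate used in Lemma~\ref{lemma_basis_error_cumulation_nonlocal}, and it removes the factor $h^{-d}$.

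\textbf{Step 1 (pointwise Cauchy--Schwarz).} Let $J(x):=\{j: x\in D_{j,h}\}$, so $|J(x)|\le C_{\rm loc}$. Then
\begin{align*}
\Bigl|\sum_{j=1}^N a_je_j(x)\Bigr|^2
=\Bigl|\sum_{j\in J(x)} a_je_j(x)\Bigr|^2
\le C_{\rm loc}\sum_{j\in J(x)} a_j^2 e_j(x)^2
\le C_{\rm loc}\gamma^2\sum_{j=1}^N a_j^2\,\mathbf 1_{D_{j,h}}(x).
\end{align*}

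\textbf{Step 2 (integrate).} Integrating over $\Om$ and using $|D_{j,h}\cap\Om|\le|D_{j,h}|\le (C_Dh)^d$ from Lemma~\ref{lemma_fem_properties} gives
\begin{align*}
\Bigl\|\sum_{j=1}^N a_je_j\Bigr\|_{L^2(\Om)}^2\le C_{\rm loc}C_D^d\,\gamma^2 h^d\|a\|_2^2 .
\end{align*}

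\textbf{Step 3 (mass-matrix equivalence).} The lower spectral bound $c_Mh^d\|a\|_2^2\le a^TMa=\|a\|_M^2$ gives $h^d\|a\|_2^2\le c_M^{-1}\|a\|_M^2$. Substituting this into Step 2 yields the claim with $C_{\rm acc}=(C_{\rm loc}C_D^d/c_M)^{1/2}$, which is independent of $h$, $a$, and $\gamma$.

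The only delicate point is the support statement in Step 1. We must make sure $e_j$ vanishes outside $D_{j,h}$ on all of $\Om$, which needs both the network support from Lemma~\ref{lemma_basis_network_local} and the inclusion of the FE support in $D_{j,h}$. We must also ensure the overlap count is uniform in $h$, which Lemma~\ref{lemma_fem_properties} provides. After that, the argument is a standard localized Cauchy--Schwarz estimate, and the $h^d$ gained from the small box volumes exactly cancels the $h^{-d}$ in the mass-norm equivalence.
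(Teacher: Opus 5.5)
Your proof is correct and follows the paper's argument essentially step for step. Bounded overlap of the boxes $D_{j,h}$ gives the pointwise Cauchy--Schwarz bound, integration gains $|D_{j,h}|\lesssim h^d$, and the lower mass-matrix bound $c_M h^d\|a\|_2^2\le\|a\|_M^2$ absorbs that factor; your explicit check that $\operatorname{supp}\phi_j\subseteq D_{j,h}$ (so that $e_j$ vanishes off $D_{j,h}$) and your constant $C_{\rm acc}=(C_{\rm loc}C_D^d/c_M)^{1/2}$ simply make explicit what the paper leaves implicit.
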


\begin{proof}
Set $e_j:=\phi_j-\Phi_{j,h,\gamma}.$
By Lemma \ref{lemma_basis_network_local}
Then $\norm{e_j}_{L^\infty(\Om)}\leq\gamma,
 \operatorname{supp}(e_j)\subseteq D_{j,h}.$
Now for any fixed $x\in \Omega$, by Lemma \ref{lemma_fem_properties}, there are $C_{\operatorname{loc}}$ number of boxes that contain $x$, and therefore $C_{\operatorname{loc}}$ error terms that are active at $x$,
\begin{align*}
     \abs{\sum_{j=1}^N a_je_j(x)}^2
 \leq
 C\gamma^2
 \sum_{j:x\in D_{j,h}}\abs{a_j}^2\leq C\gamma^2\sum_{j = 1}^N |a_j|^2 1_{D_{j, h}}(x).
\end{align*}
Now consider the $L^2$ norm and  integrateover $\Omega$ gives,
\begin{align*}
\left\|\sum_{j=1}^N a_je_j\right\|_{L^2(\Omega)}^2
&\leq
C_{\rm loc}\gamma^2
\sum_{j=1}^N |a_j|^2 \int_{\Omega} 1_{D_{j, h}} dx \leq C\gamma^2h^d\|a\|_2^2 \le    C\gamma^2\|a\|_M^2.
\end{align*}
 where we use 
$\abs{D_{j,h}}\leq Ch^d$
and the standard mass-norm and Euclidean norm equivalence in Lemma \ref{lemma_fem_properties}.
Combining the two estimates and taking square roots yield the results.
\end{proof}

\subsubsection{Full operator approximation}
Now we give the error estimate when basis network has a support property.
\begin{theorem}
\label{thm_main_local}
Suppose Assumptions~\ref{assumption_cfl} (CFL), \ref{assumption_quadrature} (exact quadrature),
\ref{assumption_reference_bounded} (numerical bounded), and
\ref{assumption_fem_mesh_nonlocal} (FEM setting) hold.
Besides, assume the numerical error satisfies, 
$\sup_{v_0\in\mathcal A}
\|v(T;v_0)-v_h^{N_t}(v_0)\|_{L^2(\Omega)}
\le C_{\rm num}(h^m+\Delta t).$
Let $0<\varepsilon<1/2$ be arbitrary and $0<\delta<M_0^2$, $0<\gamma<1$ to be determined later.
There exist scalars $N, N_t, h, \Delta t$
\begin{enumerate}[label=(\roman*),leftmargin=2.5em]
\item a ReLU coefficient network $ \NN_{h,\delta}:\mathbb R^N\to\mathbb R^N,$
obtained by iterating a block $u \rightarrow L_hu + C_hR_{\delta}(Bu)$ $N_t \sim h^{-2}$ times, where $R_{\delta}$ is a square clipping network,
\item localized ReLU basis networks $\Phi_{j,h,\gamma}:\mathbb R^d\to\mathbb R,
 j=1,\ldots,N,$
\end{enumerate}
such that the operator
$ \calG_{h,\varepsilon}(v_0)(x)
 :=
 \sum_{j=1}^N
 \bigl[\NN_{h,\delta}(\Enc(v_0))\bigr]_j
 \Phi_{j,h,\gamma}(x)$
satisfies the error estimate, 
\begin{equation}\label{eq:main-error-with-basis-local}
 \sup_{v_0\in\calA}
 \norm{
 v(T;v_0)-\calG_{h,\varepsilon}(v_0)
 }_{L^2(\Om)}
 \leq
 C\bigl(h^m+\dt+\delta+\gamma\bigr).
\end{equation}
The constant $C$ is independent of $h$, $\dt$, $\delta$,
$\gamma$, and $v_0\in\calA$.

By the CFL $\dt\sim h^2$, and set $ \eta:=\min\{m,2\}.$
One may choose
\begin{align*}
h\sim\varepsilon^{1/\eta},
 \quad
 N \sim \varepsilon^{-d/\eta},
 \quad
 \dt\sim h^2,
 \quad
 \delta\sim\varepsilon,
 \quad
 \gamma\sim\varepsilon,
\end{align*}
then
$ \sup_{v_0\in\calA}
 \norm{v(T;v_0)-\calG_{h, \varepsilon}(v_0)}_{L^2(\Om)}
 \leq C\varepsilon.$
 Coefficient network 
$\NN_{h,\delta}(\Enc(v_0))$ has depth:
$\varepsilon^{-2/\eta}\log(1/\varepsilon),$ width $\varepsilon^{-d/\eta}$.
Each localized basis networks $\Phi_{j,h,\gamma}$ has depth estimate
$ \log(1/\varepsilon),
$ and width
$\varepsilon^{-d}.$
The dominating depth of $\mathcal{G}_{h, \varepsilon}$ comes from $\NN_{h,\delta}$ and the dominating width comes from all $\Phi_{j,h,\gamma}$. 
Specifically, we have,
 \begin{equation*}
 \operatorname{depth}(\calG_\varepsilon)
 \lesssim
 \varepsilon^{-2/\eta}\log(1/\varepsilon),
 \quad
 \operatorname{width}(\calG_\varepsilon)
 \lesssim
 \varepsilon^{-d/\eta-d}.
\end{equation*}
 
\end{theorem}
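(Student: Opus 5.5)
The plan mirrors the proof of Theorem~\ref{thm_main_nonlocal}. The only change is that Lemma~\ref{lemma_basis_network_nonlocal} and Lemma~\ref{lemma_basis_error_cumulation_nonlocal} are replaced by their localized counterparts, Lemma~\ref{lemma_basis_network_local} and Lemma~\ref{lem_basis_error_accumulation_local}. The coefficient network $\NN_{h,\delta}$ is the one from Theorem~\ref{thm_semi_approximation}: iterate the block $u\mapsto L_hu+C_hR_\delta(Bu)$ $N_t$ times, with $R_\delta=\tilde R_\delta\circ\chi_{M_0}$. Its terminal output is $u_{h,\delta}^{N_t}$. For each $j$, take $\Phi_{j,h,\gamma}$ from Lemma~\ref{lemma_basis_network_local}, so that $\norm{\phi_j-\Phi_{j,h,\gamma}}_{L^\infty}\le\gamma$ and $\operatorname{supp}\Phi_{j,h,\gamma}\subseteq D_{j,h}$.

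For the error, use the same three-way splitting as \eqref{eq:three-way-error-splitting-nonlocal}.
\begin{enumerate}
\item The first term, $v(T;v_0)-v_h^{N_t}(v_0)$, is bounded by the assumed estimate $C_{\rm num}(h^m+\dt)$.
\item The second term, $\sum_j[(u_h^{N_t})_j-(u_{h,\delta}^{N_t})_j]\phi_j$, equals $\norm{e^{N_t}}_M$ by the mass-norm identity \eqref{eq:M-L2}. Proposition~\ref{prop_accumulative_error} bounds it by $C_T\delta$.
\item For the third term, $\sum_j(u_{h,\delta}^{N_t})_j(\phi_j-\Phi_{j,h,\gamma})$, apply Lemma~\ref{lem_basis_error_accumulation_local} with $a=u_{h,\delta}^{N_t}$. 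This gives the bound $C_{\rm acc}\gamma\norm{u_{h,\delta}^{N_t}}_M$.
\end{enumerate}
Then bound $\norm{u_{h,\delta}^{N_t}}_M$ uniformly in $h$, $\delta$ and $v_0$. Since $B^TWB=M$ by \eqref{eq:kpp-exact-quadrature-consequences}, Assumption~\ref{assumption_reference_bounded} gives $\norm{u_h^{N_t}}_M=\norm{Bu_h^{N_t}}_W\le M_*|\Om|^{1/2}$. The triangle inequality then gives $\norm{u_{h,\delta}^{N_t}}_M\le M_*|\Om|^{1/2}+C_T\delta\le M_*|\Om|^{1/2}+C_TM_0^2$. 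Adding the three bounds yields \eqref{eq:main-error-with-basis-local}.

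For the parameter choice, set $\dt\sim h^2$ and $\eta=\min\{m,2\}$, so that $h^m+\dt\lesssim h^\eta$. Choosing $h\sim\varepsilon^{1/\eta}$, $\delta\sim\varepsilon$ and $\gamma\sim\varepsilon$ makes every term $O(\varepsilon)$. This also gives $N\sim h^{-d}\sim\varepsilon^{-d/\eta}$ by Lemma~\ref{lemma_fem_properties}, and $N_t\sim h^{-2}$.

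The complexity count proceeds as follows.
\begin{itemize}
\item As in Theorem~\ref{thm_semi_approximation}, the coefficient network has depth $\lesssim N_t(1+\log(M_0^2/\delta))\sim\varepsilon^{-2/\eta}\log(1/\varepsilon)$ and width $\lesssim N+Q\sim\varepsilon^{-d/\eta}$.
\item Each localized basis network has depth $\lesssim 1+\log(1/\gamma)\sim\log(1/\varepsilon)$ and width $\lesssim\gamma^{-d}=\varepsilon^{-d}$.
\item Running all $N$ basis networks in parallel gives width $N\gamma^{-d}\sim\varepsilon^{-d/\eta-d}$, which dominates $\varepsilon^{-d/\eta}$. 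The depth is dominated by $\NN_{h,\delta}$.
\end{itemize}

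The final assembly, pairing coefficients with basis outputs, would be done either by exact multiplication through the bilinear readout, as in the previous theorems, or by a product network with negligible cost. I expect the only genuinely delicate step to be ensuring that $\gamma$ need not shrink with $h$. This is exactly where the support-preservation of Lemma~\ref{lemma_basis_network_local} enters, through the bounded-overlap property in Lemma~\ref{lemma_fem_properties}, together with the uniform $M$-norm bound on the network coefficients. Everything else is inherited from earlier results.
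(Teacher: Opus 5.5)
Your proposal is correct and follows the paper's proof essentially step for step. The paper also uses the three-way splitting from Theorem~\ref{thm_main_nonlocal}, with Lemma~\ref{lem_basis_error_accumulation_local} replacing the nonlocal accumulation bound so that the $h^{-d}\gamma$ term becomes $\gamma$, together with the same uniform bound $\|u_{h,\delta}^{N_t}\|_M\le M_*|\Omega|^{1/2}+C_TM_0^2$ and the same parameter choices and depth/width bookkeeping; you simply spell out the steps the paper compresses into ``the rest of the proof follows exactly as before.''
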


\begin{proof}
The proof follows Theorem \ref{thm_main_nonlocal}.
However, by Lemma \ref{lem_basis_error_accumulation_local},
\begin{align*}
 \norm{
 \sum_{j=1}^N
 (u_{h,\delta}^{N_t})_j
 \bigl(\phi_j-\Phi_{j,h,\gamma}\bigr)
 }_{L^2(\Om)} \leq
 C_{\operatorname{acc}}\gamma
 \left(M_*\abs{\Om}^{1/2}+C_T\delta\right),
\end{align*}
where $h^{-d}$ factor on the right hand side is gone.
Set $\dt\sim h^2$ and for $0<\delta<M_0^2$, the error estimate becomes
\begin{align*}
     \sup_{v_0\in\calA}
 \norm{v(T;v_0)-\calG_{h,\varepsilon}(v_0)}_{L^2(\Om)}
 \leq
 C\bigl(h^m + \Delta t + \delta+\gamma\bigr).
\end{align*}
To estimate the all quantities, 
we choose the parameters so that each error contribution
is $\sim \varepsilon$.
The rest of the proof follows exactly as before.
\end{proof}

\begin{remark}[What localization improves]
\label{rem:local-versus-nonlocal}
The localized and nonlocalized arguments both yield a final
$L^2(\Om)$-error of order $\varepsilon$; locality does not change the
convergence order of the underlying finite element and time discretizations.
Its direct analytic improvement is $ h^{-d}\gamma\rightarrow
 \gamma,$ 
 where $\gamma$ is the error to in the FEM basis approximation.
Thus the nonlocalized argument requires
$\gamma\sim\varepsilon h^{d}$, whereas the localized argument only
requires $\gamma\sim\varepsilon$.

Consequently, the localized total basis-network size $\sim \left(
 \varepsilon^{-
d/\eta-d}\log(1/\varepsilon)
 \right),$
whereas the nonlocalized tolerance leads to $\sim \left(
 \varepsilon^{-d/\eta-d-d^2/\eta}\log(1/\varepsilon)
 \right).$
Accordingly, localization saves a factor $\varepsilon^{-d^2/\eta}$ under
that common construction. 
\end{remark}

\begin{remark}[Tensor-product $Q_r$ elements]
The same argument applies to continuous $Q_r$ Lagrange elements
on conforming, shape-regular, quasi-uniform affine
tensor-product meshes, with fixed degree $r$.
Use the affine images of a fixed positive quadrature rule
that is exact on $\mathbb Q_{3r}$ on the reference cube.
Then $N,Q\sim h^{-d}$, and the resulting rates are unchanged.
For both $P_r$ and $Q_r$ elements, the construction guarantees
support contained in $D_{j,h}$; this box need not coincide
with the exact support of the finite element basis function.
\end{remark}

\section{Numerical validation}
\label{sec_numerical}

In this section, we conduct the numerical tests.
We will study three different PDEs.
For each PDE, we will compare the following four models. 
(1) The proposed model which is based on Figure \ref{fig_full_nn} and \ref{fig_one_layer}. 
(2) The FERN model \cite{zhang2025finite}, this model is also designed based on \eqref{eqn_cc_neural_operator}. 
It uses the ReLU network to construct local learnable FEM basis (hat function) in an exact way, while use separate networks one for each coefficients.
(3) Proposed method plus FERN, or we will use the proposed method to construct the coefficients network $\mathcal{N}_{h, \delta}$ to generate the coefficients meanwhile using the ReLU network to construct the exact linear learnable FEM basis.
(4) Standard DeepONet  \cite{lu2021learning} which is also designed based on \eqref{eqn_cc_neural_operator}, here we consider the stack version which uses the separate branch network for each coefficient.
In contrast, the unstacked version DeepONet uses a centralized network $\mathbb{R}^{N}\rightarrow \mathbb{R}^{N}$ to generate all coefficients simultaneously. 
Since the current scaling-law-based convergence theory naturally corresponds to the stacked architecture, we use the stacked DeepONet as our baseline. Its convergence properties have also been studied in detail in \cite{liu2024neural}.
Notably, the FERN methods use two parameters to construct one FEM basis and will return a network with much fewer network parameters.

\textbf{Network and training settings.} We now give the detailed proposed network structure used in our experiments. 
The network consists of a coefficient network and a basis network.
The coefficient network takes 32 sampled values of the initial condition
and applies four successive blocks of the form
$v\mapsto A_\ell v+C_\ell\tanh(G_\ell v+b_\ell)$,
each with state dimension 32 and independently trainable parameters.
The basis network takes the spatial coordinate $x$ and uses six fully
connected hidden layers, each with 100 neurons and $\tanh$ activation,
followed by a linear output layer producing 32 basis values.
The predicted terminal solution is obtained by taking the inner product
of the coefficient-network and basis-network outputs.

All models are trained using the Adam optimizer for 4,000
epochs with an initial learning rate of $5\times10^{-3}$.
We minimize the root mean square error between the predicted and reference
terminal solutions, using all training pairs in each optimization step.
The learning rate is updated after each epoch using a cosine annealing
scheduler with $T_{\max}=3800$ and a minimum learning rate of $10^{-4}$.

\textbf{General conclusion from the numerical results. }
Tables~\ref{table_kpp_results}, \ref{table_aggre_diff_example},
and \ref{table_ac_example} demonstrate the effectiveness of the
proposed architecture, which combines a coefficient network
derived from numerical time stepping with either global or
localized basis networks. With 32 basis functions in all models,
the proposed global-basis model achieves the lowest mean relative
errors across the three PDEs, namely $0.08\%$, $0.28\%$, and
$0.54\%$, while using approximately $16\%$ fewer parameters than
DeepONet. The localized version further reduces the parameter
count by approximately $80$--$85\%$ relative to the global version,
at the cost of higher approximation errors. Although it does
not consistently outperform the baselines in accuracy, it uses
the fewest parameters in every experiment. These results highlight
the contribution of our approach: a numerical-scheme-based
network design that accommodates different basis representations
and provides a practical tradeoff between accuracy and model size.

\subsection{Allen Cahn equation}
In this section, we study the Allen--Cahn equation,
\begin{align*}
    u_t-\nu u_{xx}=\lambda(u-u^3),
    \quad x\in[0,1], \quad t\in[0,2],
\end{align*}
where $\nu=10^{-3}$ and $\lambda=1$, subject to the homogeneous
Dirichlet boundary conditions $u(0,t)=u(1,t)=0$.
The target operator is the mapping from the initial condition
to the solution at the terminal time $T=2$.
We generate the initial condition by sampling the free parameters
$a_1$ and $a_2$ independently and uniformly from $[-0.5,0.5]$
according to
\begin{align*}
    u_0(x)=\tanh\left(
    a_1\sin(\pi x)+a_2\sin(2\pi x)
    \right).
\end{align*}
These initial conditions satisfy the boundary conditions and
take values in $(-1,1)$.
We present two example solutions in Figure~\ref{fig_ac_sol_demon}.
\begin{figure}[H]
    \centering
    \includegraphics[scale = 0.4]{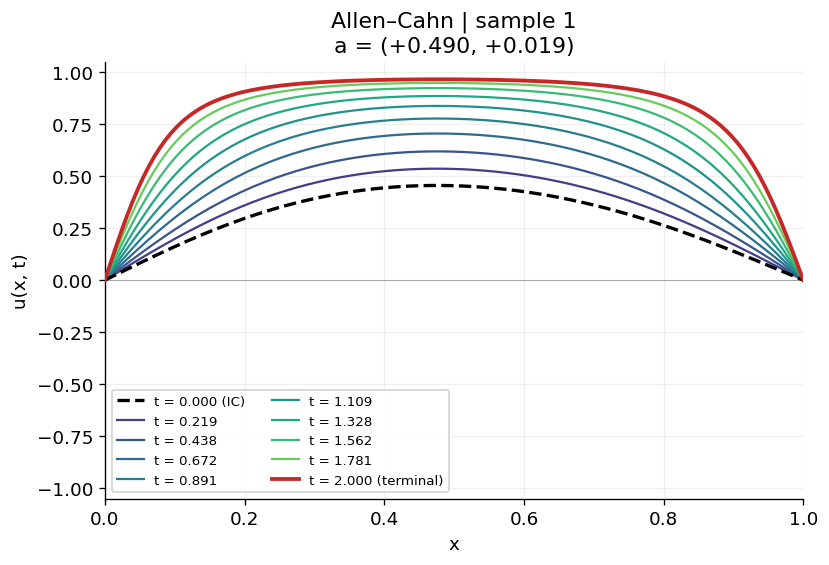}
    \includegraphics[scale = 0.4]{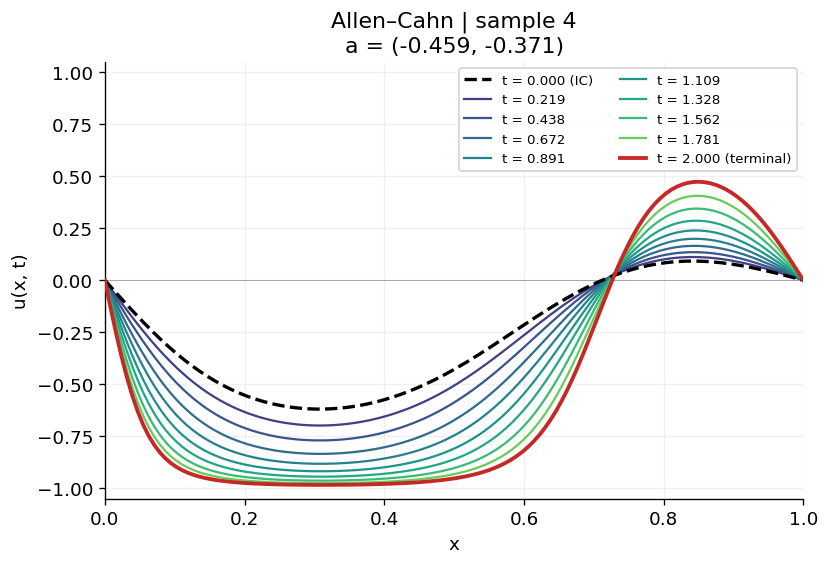}
    \caption{Demonstration of two Allen-Cahn solutions. The initial condition is generated using $u_0(x)=\tanh\left(
    a_1\sin(\pi x)+a_2\sin(2\pi x)
    \right)$, with $a_1, a_2$  specified as in the Figure.}
    \label{fig_ac_sol_demon}
\end{figure}
We present the numerical results in
Table~\ref{table_ac_example}.

\begin{table}[H]
\centering
\small
\setlength{\tabcolsep}{4pt}
\renewcommand{\arraystretch}{1.15}
\begin{tabular}{|l|c|c|c|c|}
\hline
& \shortstack{\textbf{Proposed with}\\\textbf{global basis}}
& \shortstack{\textbf{Proposed with}\\\textbf{local basis}}
& \textbf{DeepONet}
& \textbf{FERN} \\
\hline
\textbf{Relative error (\%)}
& $0.54 \pm 0.25$
& $0.89 \pm 0.17$
& $0.6 \pm 0.21$
& $0.72 \pm 0.18$ \\
\hline
\textbf{\# parameters}
& $63,316$
& $12,480$
& $75,660$
& $21,824$ \\
\hline
\textbf{\# basis}
& $32$
& $32$
& $32$
& $32$ \\
\hline
\end{tabular}
\caption{Results for Allen Cahn example.}
\label{table_ac_example}
\end{table}

\begin{figure}[H]
    \centering
    \includegraphics[scale = 0.5]{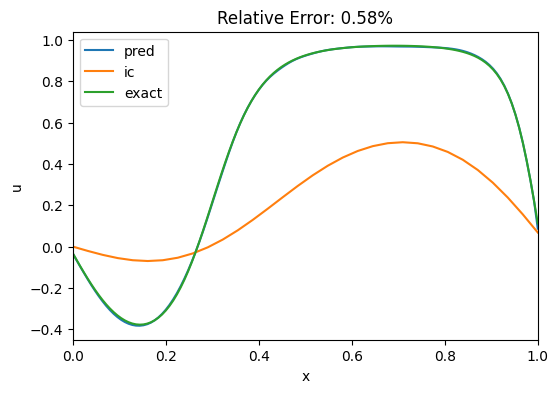}
    \includegraphics[scale = 0.5]{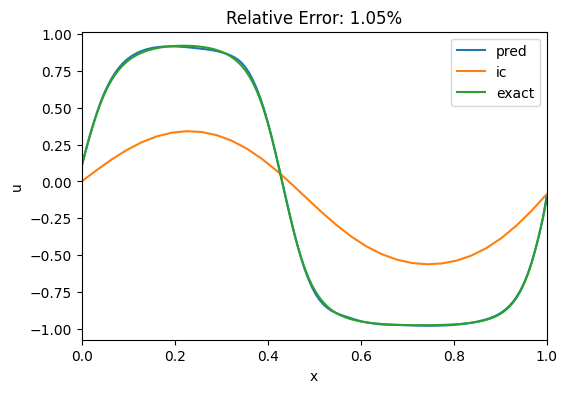}
    \caption{Predictions using the trained proposed model.}
    \label{fig_ac_pred}
\end{figure}

\subsection{KPP example}
In this section, we study the Fisher--KPP reaction-diffusion equation,
\begin{align*}
    u_t-u_{xx}=u-u^2, \quad x\in[0,1], t\in[0,T],
\end{align*}
subject to the homogeneous Dirichlet boundary conditions $u(0,t)=u(1,t)=0$. The target operator is the mapping from the initial condition to the solution at the terminal time $T$. We generate the initial condition by sampling the free parameters $a_1$ and $a_2$ independently and uniformly from $[-1,1]$ according to
$$
u_0(x)=\frac{1}{4}\sin(\pi x)\left(2+a_1\sin(2\pi x)+a_2\cos(2\pi x)\right).
$$
We present the results two demonstration in Figure \ref{fig_kpp_sol} and the results in Table \ref{table_kpp_results}.
\begin{figure}[H]
    \centering
    \includegraphics[scale = 0.4]{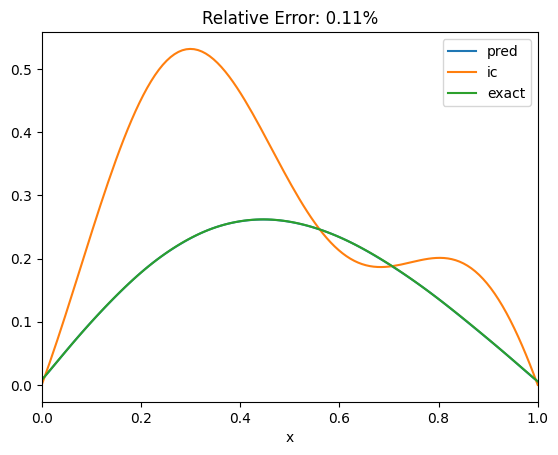}
    \includegraphics[scale = 0.4]{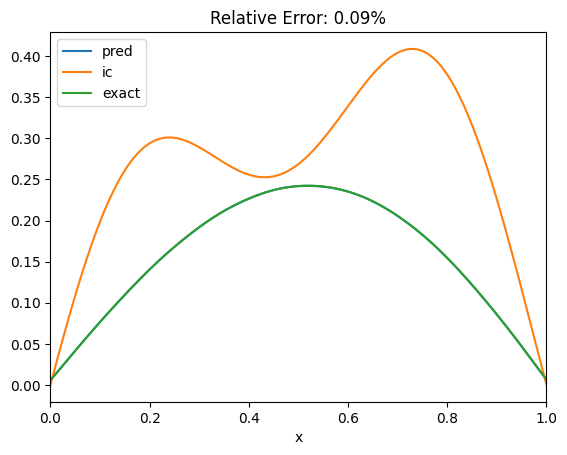}
    \caption{Demonstration of KPP predictions.}
    \label{fig_kpp_sol}
\end{figure}
Specifically, we compare four models. 
(1) The proposed model with the architecture in Section \ref{sec_nn_structure} and a global fully connected basis. 
(2) The proposed model with the same architecture, but with the basis functions constructed exactly using ReLU networks to represent the $P_1$ FEM basis functions, as in \cite{zhang2025finite}; see also Lemma~\ref{lemma_jinchao_xu}. 
(3) The standard DeepONet. 
(4) FERN, which is designed based on \eqref{eqn_cc_neural_operator} and also uses exactly constructed, learnable FEM hat functions as the basis, while the corresponding basis coefficients retain the standard DeepONet design and are learned by separate fully connected neural networks. 
Notably, models (2) and (4) use the same type of FEM-based output basis, while their main difference lies in the coefficient network: model (2) adopts our newly designed coefficient network derived from the numerical scheme, whereas FERN uses the standard DeepONet coefficient-network architecture.

\begin{table}[H]
\centering
\setlength{\tabcolsep}{5pt}
\begin{tabular}{|l|c|c|c|c|}
\hline
 & \makecell{\textbf{Proposed with}\\\textbf{global basis}}
 & \makecell{\textbf{Proposed with}\\\textbf{local basis}}
 & \textbf{FERN}
 & \textbf{DeepONet} \\ \hline
\textbf{Relative error \%}
 & $0.08 \pm 0.01$
 & $0.71 \pm 0.01$
 & $1.13 \pm 0.03$
 & $0.09 \pm 0.02$ \\ \hline
\textbf{Total \# parameters}
 & $63,212$
 & $9376$
 & $21,824$
 & $75,660$ \\ \hline
\textbf{\# basis}
 & $32$
 & $32$
 & $32$
 & $32$ \\ \hline
\end{tabular}
\caption{Model comparison results for KPP example.}
\label{table_kpp_results}
\end{table}

\subsection{Aggregation-diffusion equation example}
\label{sec_aggregation}
In this section, we study the aggregation-diffusion equation,
\begin{align*}
    u_t - (u(D u^{m-1} + W(x)*u)_x)_x = 0, \ \ x\in[-6, 6], \ \ t\in[0, 200],
\end{align*} 
where $D = 0.4, m = 2$, $W(x) =  -\frac{1}{\sqrt{2 \pi}\sigma} \exp\left(-\frac{x^2}{2\sigma^2}\right)$ with $\sigma = 1$, and $*$ is the convolution.
The target operator is the mapping from the initial
condition to the solution at the terminal time. We generate the initial condition by sampling the free parameter $c_0$ uniformly in $[1, 5]$ from $$u_0(x) = \frac{c_0}{2\sqrt{2\pi}}(\exp(-(x-x_0)^2/2)+\exp(-(x+x_0)^2/2)).$$

We present the prediction demonstration in Figure \ref{fig_aggre_diff} and the numerical results in Table \ref{table_aggre_diff_example}. 
\begin{figure}[H]
    \centering
    \includegraphics[scale = 0.4]{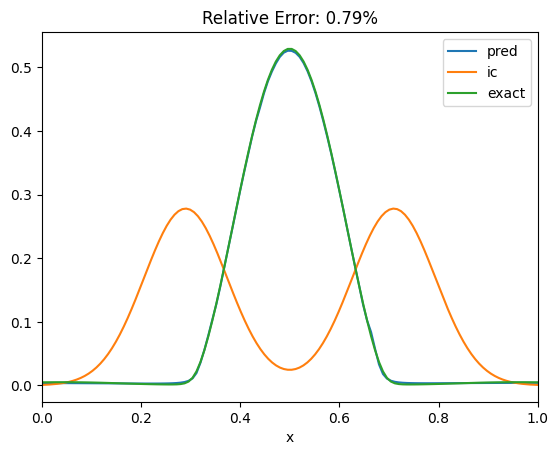}
    \includegraphics[scale = 0.4]{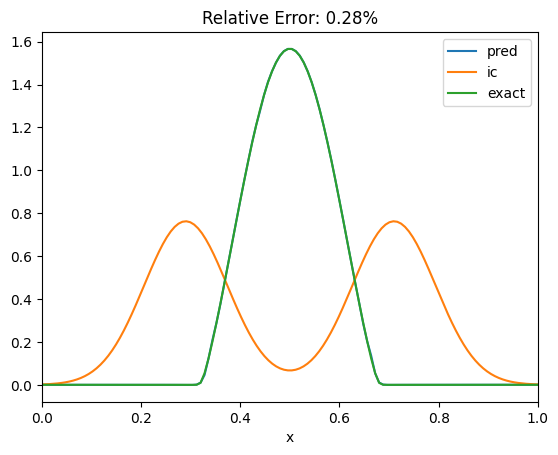}
    \caption{Two prediction demonstrations for the aggregation-diffusion equation example.}
    \label{fig_aggre_diff}
\end{figure}

\begin{table}[H]
\centering
\small
\setlength{\tabcolsep}{4pt}
\renewcommand{\arraystretch}{1.15}
\begin{tabular}{|l|c|c|c|c|}
\hline
& \shortstack{\textbf{Proposed with}\\\textbf{global basis}}
& \shortstack{\textbf{Proposed with}\\\textbf{local basis}}
& \textbf{FERN}
& \textbf{DeepOnet} \\
\hline
\textbf{Relative error (\%)}
& $0.28 \pm 0.17$
& $1.23 \pm 0.89$
& $0.94 \pm 1.18$ 
& $0.64 \pm 1.0$\\

\hline
\textbf{\# parameters}
& $63,212$
& $93,76$
& $21,824$ 
& $75,660$\\

\hline
\textbf{\# basis}
& $32$
& $32$
& $32$
& $32$ \\
\hline
\end{tabular}
\caption{Model comparison results for the aggregation-diffusion example.}
\label{table_aggre_diff_example}
\end{table}

\section{Conclusion}
In this paper, we discuss the connection between the numerical methods for solving PDE and the operator learning.
Specifically, we study one hypothesis: the trained neural operator for the forward PDe solving problem may approximate the numerical scheme which generates the training data.
Based on this hypothesis, we derive the neural operator from the numerical discretization.
The resulting network structure returns to the classical Chen-Chen neural operator form.
We also conduct the error analysis and derive the power scaling law which is widely observed in the experiments.


\newpage
\section{Appendix}
\label{sec_appendix}
\subsection{CFL.}
\label{section_cfl_appendix}
We derive the CFL condition for the scheme to make the manuscript complete.
The parabolic CFL restriction comes from the explicit treatment of diffusion.  To see the scaling, first consider the homogeneous heat equation, for which
$ u_h^{n+1}=\left(I-\dt M^{-1}S\right)u_h^n.$
We now derive the stability condition in the $M$-norm.  The relevant spectral problem is the generalized eigenvalue problem
$ S\psi_j=\lambda_jM\psi_j,
j=1,\ldots,N.$
Equivalently, $\psi_j$ is an eigenvector of $M^{-1}S$. 
$M^{-1}S$ is usually not symmetric in the Euclidean inner product, but it is self-adjoint in the $M$-inner product $ \langle a,b\rangle_M:=a^TMb.$
Indeed,
\begin{align*}
     \langle M^{-1}Sa,b\rangle_M=b^TSa=a^TSb=\langle a,M^{-1}Sb\rangle_M.
\end{align*}
We now claim $\{\psi_j\}$ forms a basis by a direct construction.
Because $M$ is symmetric positive definite, it has an orthogonal diagonalization
$M=U\diag(m_1,\ldots,m_N)U^T, m_i>0$, where $U$ is an orthonormal matrix.
We define
\begin{align*}
     M^{1/2}:=U\diag(m_1^{1/2},\ldots,m_N^{1/2})U^T,
 M^{-1/2}:=U\diag(m_1^{-1/2},\ldots,m_N^{-1/2})U^T.
\end{align*}
Thus $M^{1/2}M^{1/2}=M$, $M^{-1/2}M^{-1/2}=M^{-1}$, and $M^{-1/2}$ is invertible.  
Define matrix $ A:=M^{-1/2}SM^{-1/2}$, and $A$ is symmetric (since $M^{1/2}$ is symmetric and $S$ is also symmetric), so it has a Euclidean-orthonormal eigenbasis $z_1,\ldots,z_N$ with
$ A z_j=\lambda_j z_j.$
Set $\psi_j:=M^{-1/2}z_j$.  Multiplying $M^{-1/2}SM^{-1/2}z_j=\lambda_jz_j$ by $M^{1/2}$ gives
\begin{align*}
     S M^{-1/2}z_j=\lambda_j M^{1/2}z_j
 =\lambda_j M(M^{-1/2}z_j),
\end{align*}
hence $\psi_j$ satisfies $S\psi_j=\lambda_jM\psi_j$.
Moreover,
\begin{align*}
     \psi_i^TM\psi_j
 =z_i^TM^{-1/2}MM^{-1/2}z_j
 =z_i^Tz_j
 =\delta_{ij}.
\end{align*}
Because $M^{-1/2}$ is invertible, the vectors $\psi_1,\ldots,\psi_N$ are a basis of $\R^N$. 

The eigenvalues are characterized by the Rayleigh quotient.
Let, $w_h=\sum_j y_j\phi_j$
\begin{equation}\label{eqn_rayleigh}
 \lambda_{\max}(M^{-1}S)
 =
 \max_{0\ne y\in\R^N}
 \frac{y^TSy}{y^TMy}
 =
 \max_{0\ne w_h\in V_h}
 \frac{\norm{\nabla w_h}_{L^2(\Om)}^2}{\norm{w_h}_{L^2(\Om)}^2}.
\end{equation}
Now expand an arbitrary coefficient vector in this basis:
$ u_h^n=\sum_{j=1}^N c_j^n\psi_j.$
\medskip

The heat update maps each eigenmode independently.  Indeed, from $S\psi_j=\lambda_jM\psi_j$, multiplication by $M^{-1}$ gives $M^{-1}S\psi_j=\lambda_j\psi_j.$
Therefore we have,
\begin{align*}
     \left(I-\dt M^{-1}S\right)\psi_j
 =
 \psi_j-\dt\lambda_j\psi_j
 =
 (1-\dt\lambda_j)\psi_j.
\end{align*}
It follows that, $ u_h^{n+1}=\sum_{j=1}^N (1-\dt\lambda_j)c_j^n\psi_j.$
Using $M$-orthonormality,
\begin{align*}
     \norm{u_h^{n+1}}_M^2
 =
 \sum_{j=1}^N |1-\dt\lambda_j|^2 |c_j^n|^2,
 \quad
 \norm{u_h^n}_M^2
 =
 \sum_{j=1}^N |c_j^n|^2.
\end{align*}
Thus the heat update is non-expansive in the $M$-norm if and only if
$\max_{1\le j\le N}|1-\dt\lambda_j|\le 1.$
Since $\lambda_j\ge0$, this is equivalent to
$0\le \dt\lambda_j\le2, \text{for all }j.$
The most restrictive condition is therefore
\begin{align*}
     \dt\lambda_{\max}(M^{-1}S)\le 2.
\end{align*}

The finite element inverse inequality states that, for fixed polynomial degree $r$ on a shape-regular family of affine elements,
\begin{equation}\label{eqn_local_inverse}
 \norm{\nabla w_h}_{L^2(K)}
 \le C_{\rm inv} h_K^{-1}\norm{w_h}_{L^2(K)}
 \qquad\forall w_h\in V_h,
\end{equation}
where $C_{\rm inv}$ may depend on $r$, $d$, and the shape-regularity constant, but not on $h$.   Squaring and summing \eqref{eqn_local_inverse} over the elements,
and using $h_K\ge c_{\rm qu}h$ where $c_{\rm qu}$ is a constant, gives the global inverse estimate,
\begin{equation}\label{eq:global-inverse}
\|\nabla w_h\|_{L^2(\Omega)}
\le C_{\rm glob}h^{-1}\|w_h\|_{L^2(\Omega)},
\end{equation}
where $C_{\rm glob}=C_{\rm inv}/c_{\rm qu}$ is independent of $h$.
Combining \eqref{eqn_rayleigh} and \eqref{eq:global-inverse} yields
\begin{align*}
     \lambda_{\max}(M^{-1}S)\le C h^{-2}.
\end{align*}
For the fixed-degree Lagrange finite element spaces considered
here on shape-regular, quasi-uniform meshes, this scaling is
sharp, so $\lambda_{\max}(M^{-1}S)\sim h^{-2}$.

For the full semilinear equation, the explicit reaction term may impose an additional restriction, but this restriction is not a parabolic mesh restriction.  
Suppose the relevant exact and numerical values lie in a bounded
interval $[-M_*,M_*]$, with $M_*$ independent of $h$, and let
$L_g$ be a Lipschitz constant of $g$ on this interval.
Then the reaction contribution changes perturbations by at most a factor of size $1+\dt L_g$ per time step.  If one wants monotonicity or maximum-bound preservation, one may also need a condition of the form $\dt L_g\lesssim1$.  Crucially, $L_g$ depends on the bounded range but not on $h$.  For Fisher--KPP, for instance, $g(s)=s-s^2$, so $g'(s)=1-2s$ and one may take $L_g\le 1+2M_*$ on $[-M_*,M_*]$.  Thus the diffusion restriction $\dt\lesssim h^2$ is the stiff restriction as $h\to0$, and it controls the depth scaling of the time-unrolled network.

For the network complexity estimates, we choose an admissible
time step $\Delta t\sim h^2$ satisfying the diffusion CFL
condition. For fixed $T$, this gives
$N_t=T/\Delta t\sim h^{-2}$.
Without this additional choice, the restriction
$\Delta t\lesssim h^2$ only implies $N_t\gtrsim h^{-2}$.

\subsection{Proof of Lemma \ref{lemma_numerical_operator_bounds}}
\label{sec_stability_bnds}
In this section, we prove the stability estimates.
\begin{lemma*}[Basic stability bounds]
Assume the Assumption \ref{assumption_quadrature} hols.
Then we have $ \norm{B}_{M\to W}=1$, $\norm{C_h}_{W\to M}\le \dt$, $ \sqrt{\Tr(W)}=|\Om|^{1/2}.$
Finally, if the explicit diffusion CFL condition $\dt\lambda_{\max}(M^{-1}S)\le2$ holds, then $ \norm{L_h}_{M\to M}\le1+\dt.$
\end{lemma*}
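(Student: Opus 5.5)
The plan is to prove the four claims in order, all from the exact-quadrature identities \eqref{eq:kpp-exact-quadrature-consequences} and the $M$-orthonormal eigenbasis constructed in Appendix~\ref{section_cfl_appendix}.

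\textbf{The identities for $B$ and $W$.} For any $e\in\R^N$, the identity $B^TWB=M$ gives
\[
\norm{Be}_W^2=(Be)^TW(Be)=e^TB^TWBe=e^TMe=\norm{e}_M^2 .
\]
Hence $B$ is an isometry from the $M$-norm to the $W$-norm, and $\norm{B}_{M\to W}=1$ (the supremum is attained by any $e\ne0$). The trace identity is immediate: $\Tr(W)=\sum_q\omega_q=|\Om|$ by the third identity in \eqref{eq:kpp-exact-quadrature-consequences}.

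\textbf{The bound for $C_h$.} I will use a duality argument. For $z\in\R^Q$ set $y:=C_hz=-\dt M^{-1}B^TWz$, so that $My=-\dt B^TWz$. Then
\[
\norm{y}_M^2=y^TMy=-\dt\,(By)^TWz .
\]
Applying Cauchy--Schwarz in the $W$-inner product, which is legitimate because the weights are positive, gives
\[
\norm{y}_M^2\le\dt\,\norm{By}_W\,\norm{z}_W=\dt\,\norm{y}_M\,\norm{z}_W .
\]
Dividing by $\norm{y}_M$ (the case $y=0$ is trivial) yields $\norm{C_h}_{W\to M}\le\dt$. Equivalently, $C_h$ is $-\dt$ times the $(W,M)$-adjoint of $B$, and an adjoint has the same norm as the operator.

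\textbf{The bound for $L_h$.} Write $L_h=\dt\,I+(I-\dt M^{-1}S)$. The triangle inequality for induced norms gives $\norm{L_h}_{M\to M}\le\dt+\norm{I-\dt M^{-1}S}_{M\to M}$. To bound the second term, expand $a=\sum_jc_j\psi_j$ in the $M$-orthonormal eigenbasis of Appendix~\ref{section_cfl_appendix}, with $S\psi_j=\lambda_jM\psi_j$ and $\lambda_j\ge0$ since $S$ is positive semidefinite. Then
\[
\norm{(I-\dt M^{-1}S)a}_M^2=\sum_j|1-\dt\lambda_j|^2c_j^2\le\sum_jc_j^2=\norm{a}_M^2 ,
\]
because the CFL condition gives $0\le\dt\lambda_j\le2$ for every $j$. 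Hence $\norm{L_h}_{M\to M}\le1+\dt$.

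The only delicate step is the $C_h$ bound. It must be done by the adjoint/Cauchy--Schwarz argument above rather than by crude matrix norms, because bounding $\dt\,\norm{M^{-1}}\,\norm{B^TW}$ separately in Euclidean norms would introduce $h$-dependent factors. The argument also uses $\omega_q>0$ (from Assumption~\ref{assumption_quadrature}) so that $\norm{\cdot}_W$ is a genuine norm.
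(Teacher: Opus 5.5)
Your proof is correct and takes essentially the same route as the paper: the isometry $\norm{Be}_W=\norm{e}_M$ from $B^TWB=M$, an adjoint/Cauchy--Schwarz argument in the $W$-inner product for $C_h$, exactness on constants for $\Tr(W)$, and the $M$-orthonormal generalized eigenbasis for $L_h$. The differences are only cosmetic and equivalent: for $C_h$ you test against $y=C_hz$ itself rather than taking the dual supremum over all $y$, and for $L_h$ you split off $\dt\,I$ with the triangle inequality rather than bounding the shifted eigenvalues $1+\dt-\dt\lambda_j$ directly.
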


\begin{proof}
Mass exactness followed by Assumption \ref{assumption_quadrature} gives
\begin{align*}
     \norm{Be}_W^2=e^TB^TWBe=e^TMe=\norm{e}_M^2.
\end{align*}
Therefore, by the definition \eqref{eqn_operator_norms_1}, we have $\norm{B}_{M\to W}=1$.
For $C_h = -\dt M^{-1}B^TW\in \mathbb{R}^{N\times Q}$, let $z\in\R^Q$ be arbitrary, and set $ x:=M^{-1}B^TWz.$
We estimate $\norm{x}_M$.  The dual representation of the norm induced by the inner product $\langle a,b\rangle_M=a^TMb$ is
\begin{equation}\label{eq:M-duality}
 \norm{x}_M
 =
 \sup_{y\ne0}\frac{\abs{\langle x,y\rangle_M}}{\norm{y}_M}.
\end{equation}
Since $Mx=B^TWz$ and the $M$ adjointness of the inner product $\langle \cdot, \cdot\rangle$ (See Appendix \ref{section_cfl_appendix}) for the details,
\begin{align*}
     \langle x,y\rangle_M
 =
 x^TMy
 =
 y^TMx
 =
 y^TB^TWz
 =
 (By)^TWz.
\end{align*}
By the Cauchy--Schwarz inequality in the $W$-inner product and by $\norm{B}_{M\to W}=1$,
\begin{align*}
     \abs{(By)^TWz}
 \le
 \norm{By}_W\norm{z}_W
 \le
 \norm{y}_M\norm{z}_W.
\end{align*}
Inserting this into \eqref{eq:M-duality} gives
\begin{align*}
     \norm{M^{-1}B^TWz}_M\le \norm{z}_W.
\end{align*}
Multiplying by $\dt$ proves $\norm{C_h}_{W\to M}\le \dt$.
Next, we prove $\sqrt{\Tr(W)}=|\Om|^{1/2}$.
Since $W=\diag(\omega_1,\dots,\omega_Q)$, $\Tr(W)=\sum_{q=1}^Q\omega_q$.
If the quadrature is exact for constants, then $\sum_q\omega_q=\int_\Om1\,dx=|\Om|$.  
In the degree-$3r$ exact setting of Section~\ref{subsec_kpp-quad}, exactness for constants is automatic because constants are polynomials of degree zero.

\medskip
It remains to prove $L_h$ operator norm.  
Because $M$ is symmetric positive definite and $S$ is symmetric, the generalized eigenproblem
\begin{align*}
     S\psi_j=\lambda_jM\psi_j,
 \qquad j=1,\ldots,N,
\end{align*}
has $N$ generalized eigenvectors that may be chosen $M$-orthonormal and therefore forms an $M$-orthonormal basis of $\R^N$. 
Therefore every $e\in\R^N$ has a unique expansion $e=\sum_{j=1}^N c_j\psi_j$.
By the definition of $L_h$, we have
\begin{align*}
     L_h\psi_j
 =
 \left(I+\dt I-\dt M^{-1}S\right)\psi_j
 =
 (1+\dt-\dt\lambda_j)\psi_j
 =
 \mu_j\psi_j.
\end{align*}
Under $\dt\lambda_{\max}\le2$ CFL condition, we have $\dt\lambda_j\in[0,2]$, hence
\begin{align*}
     \abs{\mu_j}=\abs{1+\dt-\dt\lambda_j}\le1+\dt.
\end{align*}
Using the expansion above,
\begin{align*}
     L_he=\sum_{j=1}^N c_jL_h\psi_j
      =\sum_{j=1}^N c_j\mu_j\psi_j.
\end{align*}
By $M$-orthonormality of $\psi_j$, all cross terms vanish:
\begin{align*}
     \norm{e}_M^2
 =\left(\sum_i c_i\psi_i\right)^TM\left(\sum_j c_j\psi_j\right)
 =\sum_{i,j}c_ic_j\psi_i^TM\psi_j
 =\sum_{j=1}^N\abs{c_j}^2,
\end{align*}
and similarly
\begin{align*}
     \norm{L_he}_M^2
 =\left(\sum_i c_i\mu_i\psi_i\right)^TM
  \left(\sum_j c_j\mu_j\psi_j\right)
 =\sum_{j=1}^N\abs{\mu_j}^2\abs{c_j}^2.
\end{align*}
Thus
\begin{align*}
     \norm{L_he}_M
 \le
 \max_{1\le j\le N}\abs{\mu_j}\,\norm{e}_M
 \le
 (1+\dt)\norm{e}_M.
\end{align*}
Taking the supremum over $e\ne0$ gives $\norm{L_h}_{M\to M}\le1+\dt$.
\end{proof}

\subsection{Proof of the approximating to FEM basis}
\label{sec_appendix_proof_of_fem_basis_approx}
In this section we provide the proof to Lemma \ref{lemma_basis_network_nonlocal}.

\begin{proof}
Fix a basis index $j$ and $0<\gamma<1$. Throughout the proof, constants may depend on the fixed degree $r$, dimension $d$, reference nodal basis, and the shape-regularity and quasi-uniformity constants, but are independent of $h$, $j$, and $\gamma$.

Extend $\phi_j$ by zero outside $\Omega$, and denote its extension by $\bar\phi_j$. Since $\phi_j$ is continuous and has zero boundary trace, its zero extension is continuous and belongs to $W^{1,\infty}(\mathbb R^d)$. Standard scaling estimates for fixed-degree nodal Lagrange basis functions give
$\|\bar\phi_j\|_{L^\infty(\mathbb R^d)}
+h\|\nabla\bar\phi_j\|_{L^\infty(\mathbb R^d)}\le C_0$.
Moreover, the support of $\bar\phi_j$ has diameter at most $Ch$. We may therefore choose a closed box
$D_{j,h}=\prod_{i=1}^d[a_{j,i},a_{j,i}+\ell_{j,i}]$
such that
$\operatorname{supp}\bar\phi_j\subset\operatorname{int}D_{j,h}$
and $ch\le\ell_{j,i}\le Ch$ for every $i$. The box need not lie inside $\Omega$.

Define the affine bijection
$F_{j,h}:\mathbb R^d\to\mathbb R^d$
by
$(F_{j,h}(y))_i=a_{j,i}+\ell_{j,i}y_i$.
Then $F_{j,h}([0,1]^d)=D_{j,h}$, and its inverse is defined for every $x\in\mathbb R^d$ by
$(F_{j,h}^{-1}(x))_i=(x_i-a_{j,i})/\ell_{j,i}$.
On the reference cube, set
$\psi_{j,h}=\bar\phi_j\circ F_{j,h}$.
We have
$\|\psi_{j,h}\|_{L^\infty([0,1]^d)}\le C_0$,
and the weak chain rule gives
$\partial_{y_i}\psi_{j,h}
=\ell_{j,i}(\partial_{x_i}\bar\phi_j)\circ F_{j,h}$
almost everywhere. Consequently,
$\|\partial_{y_i}\psi_{j,h}\|_{L^\infty([0,1]^d)}
\le \ell_{j,i}C_0h^{-1}\le C$.
Thus there exists a constant $A\ge1$, independent of $h$ and $j$, such that
$\|\psi_{j,h}\|_{W^{1,\infty}([0,1]^d)}\le A$.
Our choice of the box also ensures that
$\psi_{j,h}=0$ on $\partial[0,1]^d$.

Apply Yarotsky's approximation theorem' with $n=1$, to
$\psi_{j,h}/A$ with accuracy $\gamma/A$, and multiply the network output by $A$.
This gives a ReLU network $\widehat\Phi_{j,h,\gamma}$ satisfying
$\sup_{y\in[0,1]^d}
|\widehat\Phi_{j,h,\gamma}(y)-\psi_{j,h}(y)|\le\gamma$.
The estimate holds on the closed cube because both functions are continuous.
Since $A$ is independent of $h$ and $j$, the construction has depth at most
$C(1+\log(1/\gamma))$, width at most $C\gamma^{-d}$, and size at most
$C\gamma^{-d}(1+\log(1/\gamma))$.
Here size denotes the number of nonzero parameters.
The width bound follows from the parallel realization of
$O((\gamma/A)^{-d})$ local subnetworks in the theorem's construction,
each with width bounded in terms of $d$.

Let $\sigma(t)=\max\{t,0\}$, and define the coordinatewise clipping map
$\Pi:\mathbb R^d\to[0,1]^d$ by
$\Pi(y)_i=\sigma(y_i)-\sigma(y_i-1)$.
For every $x\in\mathbb R^d$, set
$\Phi_{j,h,\gamma}(x)
=\widehat\Phi_{j,h,\gamma}\bigl(\Pi(F_{j,h}^{-1}(x))\bigr)$.

If $x\in D_{j,h}$, then $F_{j,h}^{-1}(x)\in[0,1]^d$, so clipping leaves this point unchanged. Hence
$|\Phi_{j,h,\gamma}(x)-\bar\phi_j(x)|
=|\widehat\Phi_{j,h,\gamma}(F_{j,h}^{-1}(x))
-\psi_{j,h}(F_{j,h}^{-1}(x))|
\le\gamma$.

If $x\notin D_{j,h}$, at least one coordinate of
$F_{j,h}^{-1}(x)$ lies outside $[0,1]$. Therefore,
$z:=\Pi(F_{j,h}^{-1}(x))\in\partial[0,1]^d$,
where $\psi_{j,h}(z)=0$.
Also, $\bar\phi_j(x)=0$, since its support lies inside $D_{j,h}$.
It follows that
$|\Phi_{j,h,\gamma}(x)-\bar\phi_j(x)|
=|\widehat\Phi_{j,h,\gamma}(z)-\psi_{j,h}(z)|
\le\gamma$.
Combining the two cases yields
$\|\Phi_{j,h,\gamma}-\bar\phi_j\|_{L^\infty(\mathbb R^d)}\le\gamma$,
and consequently
$\|\Phi_{j,h,\gamma}-\phi_j\|_{L^\infty(\Omega)}\le\gamma$.

Finally, the affine map $F_{j,h}^{-1}$ and the clipping map are represented exactly by networks with complexity bounded in terms of $d$. Composing these with $\widehat\Phi_{j,h,\gamma}$ preserves the stated depth, width, and size bounds up to uniform constants. Although the affine weights may grow as $h^{-1}$, their number is independent of $h$. This proves the lemma.
\end{proof}

\subsection{Proof of the approximation with local support}
\label{sec_proof_of_local_approx_lemma}
In this section, we prove the Lemma \ref{lemma_support_preserving_approximation} ReLU approximation where the network has a local support.

\begin{proof}
Because $n\geq1$, the function $f$ is Lipschitz, or 
$\|f(x) - f(y)\|\leq L_f \|x-y\|_{\infty}$ for $x, y$ in the domain, where $L_f$ is the Lipschitz constant.
Besides, let $ B_f:=\norm{f}_{L^\infty(\mathbb R^d)}+1.$
Since $f$ is continuous and vanishes outside the closed box $D$, it
vanishes on $\partial D$. 
Hence
\begin{equation}\label{eqn_support_boundary_distance_local}
 \abs{f(x)}
 \leq
 L_f\operatorname{dist}(x,\partial D),
 \quad x\in D.
\end{equation}

Choose a buffer width $\kappa$ satisfying $ 0<\kappa<\frac12\min_{1\leq i\leq d}(b_i-a_i),$
and define $ D_\kappa
 :=
 \prod_{i=1}^d[a_i+\kappa,b_i-\kappa].$
For each coordinate, define the exact ReLU trapezoid
\begin{equation}\label{eq:coordinate-mask-local}
 \psi_{i,\kappa}(t)
 :=
 \frac1\kappa
 \Bigl[
 \relu(t-a_i)
 -\relu(t-(a_i+\kappa))
 -\relu(t-(b_i-\kappa))
 +\relu(t-b_i)
 \Bigr].
\end{equation}
Then $0\leq\psi_{i,\kappa}\leq1$, the function
$\psi_{i,\kappa}$ vanishes outside $[a_i,b_i]$, and it equals one on
$[a_i+\kappa,b_i-\kappa]$; 
it continuous linearly connect between $a_i$ and $a_i + \kappa$ and $b_i-\kappa$ and $b_i$.
Now we use the minimum function constructed by constant size ReLU network,
\begin{align*}
     \min\{s,t\}
 =
 \frac12\bigl(s+t-\relu(s-t)-\relu(t-s)\bigr),
\end{align*}
and construct a mask,
\begin{align*}
     M_\kappa(x)
 :=
 \min_{1\leq i\leq d}\psi_{i,\kappa}(x_i), \quad x = [x_1, ..., x_d]^{\intercal}.
\end{align*}
Thus $ 0\leq M_\kappa\leq1,
 M_\kappa=1\ \text{on }D_\kappa,
 M_\kappa=0\ \text{on }\mathbb R^d\setminus D.$
Since mask is implemented by the ReLU structure of minimizing $d$ inputs, we can use the binary tree structure. Consequently, the mask has depth $\sim 1+\log d$ and width $\sim d$.

Let $0<\rho,\mu<1$ be determined later.  
The target domain $D$ is a cube,
then apply an affine rescaling of $D$ to the unit cube and
normalization by the $W^{n,\infty}$-norm,
Yarotsky's approximation Lemma \ref{lemma_yarotsky} gives a ReLU network $F_\rho$ such that
$ \norm{f-F_\rho}_{L^\infty(D)}\leq\rho.$
Its depth is $O(1+\log(1/\rho))$, and its size is
$O(\rho^{-d/n}(1+\log(1/\rho)))$ with width $\rho^{-d/n}$.  
Besides, $\abs{F_\rho(x)}\leq B_f,$ for $x\in D$.
\medskip

Let $\operatorname{Mult}_{\mu,B_f}$ be a ReLU multiplication network in Lemma \ref{lemma_yarotsky_product} such that
\begin{align}
 \abs{\operatorname{Mult}_{\mu,B_f}(s,t)-st}
 &\leq\mu, \text{ for }\abs{s}\leq B_f,\ \abs{t}\leq B_f, \nonumber
 \\
 \operatorname{ Mult}_{\mu,B_f}(s,0)
 &=0, \text{ for every }s\in\mathbb R.
 \label{eqn_multiplication_properties}
\end{align}
This network has width order constant, depth and size $\sim 1+\log(1/\mu)$.  
Define
\begin{align*}
     \widetilde f_\varepsilon(x)
 :=
 \operatorname{Mult}_{\mu,B_f}\bigl(F_\rho(x),M_\kappa(x)\bigr).
\end{align*}
We now study the value of $\tilde{f}_{\varepsilon}$ at different intervals.

(1) If $x\notin D$, then $M_\kappa(x)=0$, and
\eqref{eqn_multiplication_properties} gives
$\widetilde f_\varepsilon(x)=0$.  
Hence
$\operatorname{supp}(\widetilde f_\varepsilon)\subseteq D$.

(2) If $x\in D_\kappa$, then $M_\kappa(x)=1$. 
By the
multiplication approximation property,
$|\widetilde f_\varepsilon(x)-f(x)|\leq\mu+\rho$.
(3) If $x\in D\setminus D_\kappa$, then
$\operatorname{dist}(x,\partial D)\leq\kappa$.  
Using
\eqref{eqn_support_boundary_distance_local}, $0\leq M_\kappa\leq1$, and
\eqref{eqn_multiplication_properties}, we obtain
\begin{align*}
 \abs{\widetilde f_\varepsilon(x)-f(x)}
 &\leq
 \abs{\operatorname{Mult}_{\mu,B_f}(F_\rho,M_\kappa)-F_\rho M_\kappa}
 +\abs{(F_\rho-f)M_\kappa}
 +\abs{f(M_\kappa-1)}\\
 &\leq
 \mu+\rho+L_f\kappa.
\end{align*}
Note that the first estimate above is true as $B_f >1$ by the definition. 
Choose
$$
 \rho=\frac\varepsilon3,
 \quad
 \mu=\frac\varepsilon3,
 \quad
\kappa
=
\min\left\{
\frac14\min_{1\le i\le d}(b_i-a_i),
\frac{\varepsilon}{3\max\{L_f,1\}}
\right\}.
$$
Since $\mu\sim\rho\sim\varepsilon$, the size of $F_{\rho}$ dominates the entire network size over the mask $M_{\kappa}$ and $\operatorname{Mult}_{\mu,B_f}$.
Consequently, we have the estimate as in the theorem argument.

\end{proof}

\bibliographystyle{plain}
\bibliography{references}

\end{document}